\author{ L\!'ubom\'{i}ra Dvo\v{r}\'{a}kov\'{a}
  \and Kate\v{r}ina Medkov\'{a}
  \and Edita Pelantov\'{a}}
\title{Complementary symmetric Rote sequences: the critical exponent and the recurrence function\thanks{The research received funding from the Ministry of Education, Youth and Sports of the Czech Republic through the project no. CZ.02.1.01/0.0/0.0/16\_019/0000778 and from the Grant Agency of the Czech Technical University in Prague through the grant no. SGS20/183/OHK4/3T/14.}}
\affiliation{FNSPE, Czech Technical University in Prague, Czech republic}
\keywords{critical exponent, recurrence function, Rote sequence, Sturmian sequence, return word, bispecial factor}
\begin{document}

\publicationdetails{22}{2020}{1}{20}{6204}

\maketitle

\begin{abstract}
We determine the critical exponent and the recurrence function of complementary symmetric Rote sequences. The formulae are expressed in terms of the continued fraction expansions associated with the S-adic representations of the corresponding standard Sturmian sequences. The results are based on a thorough study of return words to bispecial factors of Sturmian sequences. Using the formula for the critical exponent, we describe all complementary symmetric Rote sequences with the critical exponent less than or equal to 3, and we show that there are uncountably many complementary symmetric Rote sequences with the critical exponent less than the critical exponent of the Fibonacci sequence.
Our study is motivated by a~conjecture  on sequences rich in palindromes formulated by Baranwal and Shallit.  Its recent  solution  by  Curie, Mol, and Rampersad uses two  particular complementary symmetric  Rote sequences.
\end{abstract}

\def\Q{\mathbb Q}
\def\N{\mathbb N}
\def\A{\mathcal A}
\def\S{\mathcal S}
\def\uu{\mathbf u}
\def\vv{\mathbf v}
\def\dd{\mathbf d}
\def\ind{\text{ind}}
\def\CR{\text{cr}}

\newtheorem{thm}{Theorem}
\newtheorem{coro}[thm]{Corollary}
\newtheorem{lem}[thm]{Lemma}
\newtheorem{obs}[thm]{Observation}
\newtheorem{prop}[thm]{Proposition}
\theoremstyle{definition}
\newtheorem{defi}[thm]{Definition}
\newtheorem{nota}[thm]{Notation}
\newtheorem{rem}[thm]{Remark}
\newtheorem{exam}[thm]{Example}

\section*{Introduction}

We study the relation between the critical exponents of two binary sequences $\vv = v_0v_1v_2 \cdots$ and  $\uu = u_0u_1u_2 \cdots$  over the alphabet $\{0,1\}$, where $u_i = v_i+v_{i+1} \mod 2$ for each $i \in \N$.
We write $\uu = \S(\vv)$.
Our study is motivated by a~conjecture formulated by Baranwal and Shallit in~\cite{BaSh}.
They searched for binary sequences rich in palindromes with a~minimum critical exponent.
They showed that the value of this critical exponent is greater than $2.707$.
Moreover, they found two sequences $\vv^{(1)}$ and $\vv^{(2)}$ having the critical exponent equal to $2+ \tfrac{1}{\sqrt{2}}$ and they conjectured that this is the minimum value.
Both of these sequences belong to the class of complementary symmetric Rote sequences.
Their conjecture has been recently proved by Curie, Mol, and Rampersad in \cite{CuMoRa}.

A Rote sequence is a binary sequence $\vv$ containing $2n$ factors of length $n$ for every $n \in \N, n \geq 1$.
If the language of $\vv$ is invariant under the exchange of letters $0\leftrightarrow 1$, the sequence $\vv$ is called a~complementary symmetric (CS) Rote sequence.
Already in his original paper \cite{Ro94}, Rote proved that these sequences are essentially connected with Sturmian sequences.
He deduced that a binary sequence $\vv$ is a CS Rote sequence if and only if the sequence $\uu =\S(\vv)$ is Sturmian.
Both CS Rote sequences and Sturmian sequences are rich in palindromes, see \cite{BlBrLaVu, DrJuPi}.

The formula for the critical exponent of Sturmian sequences was provided by Damanik and Lenz in \cite{DaLe}.
The relation between the critical exponent of a CS Rote sequence $\vv$ and the associated Sturmian sequence $\S(\vv)$ is not straightforward:
While the minimum exponent among all Sturmian sequences is reached by the Fibonacci sequence and it is $3+ \frac{2}{1+\sqrt{5}}$ (see \cite{MiPi}), the two CS Rote sequences $\vv^{(1)}$, $\vv^{(2)}$ whose critical exponent equals $2+ \frac{1}{\sqrt{2}}$, i.e., it is minimum among all binary rich sequences,  are associated with the Sturmian sequences $\S(\vv^{(1)})$ and $\S(\vv^{(2)})$ whose critical exponent is $3+\sqrt{2}$.

In this paper, we will first derive the relation between the critical exponents of the sequences $\vv$ and $\S(\vv)$, where $\vv$ is a ~uniformly recurrent binary sequence whose language is closed under the exchange of letters, see Theorem \ref{Lem_PrevodKritExpoRoteNaSturm}.
Using this relation, we will determine the formula for the critical exponent of any CS Rote sequence, see Theorem \ref{T_KritExpo}.

One of the consequences of this theorem is for instance the fact that the languages of the sequences $\vv^{(1)}$ and $\vv^{(2)}$  are the only languages of CS Rote sequences with the critical exponent less than $3$, see Proposition~\ref{Prop_MalyExponent}.
In this context, let us mention that in \cite{CuMoRa} the authors showed that there are exactly two languages of rich binary sequences with the critical exponent less than $\tfrac{14}{5}$ and they are the languages of the sequences $\vv^{(1)}$ and $\vv^{(2)}$.
Furthermore, we show that there are uncountably many CS Rote sequences with the critical exponent strictly less than the critical exponent of the Fibonacci sequence, see Theorem~\ref{Thm_Exponent72}.

Our main technical tool is the description of return words to bispecial factors of Sturmian sequences in terms of the continued fraction expansions related to the S-adic representations of Sturmian sequences.
As a by-product, we obtain an explicit formula for the recurrence function of CS Rote sequences, see Theorem~\ref{Thm_RecFunctionRoteDetailed}.  When formulating our results, we use the convergents  $\bigl(\tfrac{p_N}{q_N}\bigr)$  of an irrational number $\theta=[0,a_1,a_2,a_3,\ldots]$, where the coefficients $a_i$'s  in the continued fraction expansion of $\theta$ correspond to the S-adic representation of the standard Sturmian sequence associated to a given CS Rote sequence.

There are many generalizations of Sturmian sequences to multiliteral alphabets, see \cite{BaPeSt}. The critical exponent and the recurrence function were studied for two of these generalizations. Justin and Pirillo described in \cite{JuPi} the critical exponent of substitutive Arnoux-Rauzy sequences. Recently, Rampersad, Shallit, and Vandomme in  \cite{RaShVa}, and Baranwal and Shallit in \cite{BaShBalanced} determined the minimal threshold for the critical exponent of balanced sequences over alphabets of cardinality $3, 4$, and $5$, respectively.
The recurrence function of Sturmian sequences was found by Morse and Hedlund in \cite{MoHe}, and their result was generalized by Cassaigne and Chekhova in \cite{CaCh} for Arnoux-Rauzy sequences.

The paper is organized as follows.
We first introduce basic notions from combinatorics on words in Section~\ref{S_Preliminaries}. In Section~\ref{S_CriticalExponent}, we recall how to simplify the formula for the critical exponent using return words to bispecial factors.
The definitions of the already mentioned mapping $\S$ and complementary symmetric Rote sequences and their basic properties are provided in Section~\ref{S_mapS_CSRote}.
The relation between the critical exponents of the sequences $\vv$ and $\S(\vv)$ is described in Section~\ref{S_v_andSv}.
The main tool for further results -- a~thorough study of return words to bispecial factors of Sturmian sequences using the S-adic representation -- is carried out in Section~\ref{S_retwordsBS_Sturmian}.
An explicit formula for the critical exponent of CS Rote sequences is given in Section~\ref{S_CriticalExponentRote}.
CS Rote sequences with a small critical exponent are studied in Section~\ref{S_smallexponent}.
And finally, in Section~\ref{S_recurrencefunction}, an explicit formula for the recurrence function of CS Rote sequences is derived.

\section{Preliminaries} \label{S_Preliminaries}

An \textit{alphabet} $\A$ is a finite set of symbols called \textit{letters}.
A \textit{word} over $\A$ of \textit{length} $n$ is a string $u = u_0u_1 \cdots u_{n-1}$, where $u_i \in \A$ for all $i \in \{0,1, \ldots, n-1\}$. The length of $u$ is denoted by $|u|$.
The set of all finite words over $\A$ together with the operation of concatenation form a monoid $\A^*$.
Its neutral element is the \textit{empty word} $\varepsilon$ and we denote $\A^+ = \A^* \setminus \{\varepsilon\}$.

If $u = xyz$ for some $x,y,z \in \A^*$, then $x$ is a \textit{prefix} of $u$, $z$ is a \textit{suffix} of $u$ and $y$ is a \textit{factor} of $u$. We sometimes use the notation $yz = x^{-1}u$.

To any word $u$ over $\A$ with cardinality $\#\A = d$,  we assign its \textit{Parikh vector} $\vec{V}(u) \in \N^{d}$ defined as $(\vec{V}(u))_a = |u|_a$ for all $a \in \A$, where $|u|_a$ is the number of letters $a$ occurring in $u$.

A \textit{sequence} over $\A$ is an infinite string $\uu = u_0u_1u_2 \cdots$, where $u_i \in \A$ for all $i \in \N = \{0,1,2, \ldots\}$.
We always denote sequences by bold letters.
A sequence $\uu$ is \textit{eventually periodic} if $\uu = vwww \cdots = v(w)^\omega$ for some $v \in \A^*$ and $w \in \A^+$.
Otherwise $\uu$ is \textit{aperiodic}.

A \textit{factor} of $\uu$ is a word $y$ such that $y = u_iu_{i+1}u_{i+2} \cdots u_{j-1}$ for some $i, j \in \N$, $i \leq j$.
The number $i$ is called an \textit{occurrence} of the factor $y$ in $\uu$.
In particular, if $i = j$, the factor $y$ is the empty word $\varepsilon$ and any index $i$ is its occurrence.
If $i=0$, the factor $y$ is a \textit{prefix} of $\uu$.
If each factor of $\uu$ has infinitely many occurrences in $\uu$, the sequence $\uu$ is \textit{recurrent}.
Moreover, if for each factor the distances between its consecutive occurrences are bounded, $\uu$ is \textit{uniformly recurrent}.

The \textit{language} $\mathcal{L}(\uu)$ of the sequence $\uu$ is the set of all factors of $\uu$.
A factor $w$ of $\uu$ is \textit{right special} if both words $wa$ and $wb$ are factors of $\uu$ for at least two distinct letters $a,b \in \A$.
Analogously we define a \textit{left special} factor.
A factor is \textit{bispecial} if it is both left and right special.
Note that the empty word $\varepsilon$ is a bispecial factor if at least two distinct letters occur in $\uu$.

The \textit{factor complexity} of a sequence $\uu$ is a mapping $\mathcal{C}_\uu: \N \to \N$ defined by
$$\mathcal{C}_\uu(n) = \# \{w \in \mathcal{L}(\uu) : |w| =  n \}\, .$$
The aperiodic sequences with the lowest possible factor complexity are called \textit{Sturmian sequences}.
In other words, it means that a sequence $\uu$ is \textit{Sturmian} if it has the factor complexity $\mathcal{C}_\uu(n) = n+1$ for all $n \in \N$.
Clearly, all Sturmian sequences are defined over a binary alphabet, e.g., $\{0, 1\}$.
There are many equivalent definitions of Sturmian sequences, see a survey in \cite{BaPeSt}.

A \textit{morphism} over $\A$ is a mapping $\psi: \A^* \to \A^*$ such that $\psi(uv) = \psi(u)\psi(v)$  for all $u, v \in \A^*$.
The morphism $\psi$ can be naturally extended to sequences by
$$\psi(\uu)=\psi(u_0u_1u_2 \cdots) = \psi(u_0)\psi(u_1)\psi(u_2) \cdots\,. $$
A \textit{fixed point} of a morphism $\psi$ is a sequence $\uu$ such that $\psi(\uu) = \uu$.
The \textit{matrix} of a morphism $\psi$ over $\A$ with the cardinality $\#\A = d$ is the matrix $M_\psi \in \N^{d\times d}$ defined as $(M_\psi)_{ab} = |\psi(a)|_b$ for all $a,b \in \A$.
The Parikh vector of the $\psi$-image of a word $w\in \A^*$ can be obtained via multiplication by the matrix $M_\psi$, i.e.,
\begin{equation}\label{multi}\vec{V}({\psi(w)}) = M_\psi \vec{V}(w)\,.
\end{equation}

Consider a prefix $w$ of a recurrent sequence $\uu$.
Let $i < j$ be two consecutive occurrences of $w$ in $\uu$.
Then the word $u_iu_{i+1} \cdots u_{j-1}$ is a \textit{return word} to $w$ in $\uu$.
The set of all return words to $w$ in $\uu$ is denoted by $\mathcal{R}_\uu(w)$.
If the sequence $\uu$ is uniformly recurrent, the set $\mathcal{R}_\uu(w)$ is finite for each prefix $w$, i.e., $\mathcal{R}_\uu(w) = \{r_0, r_1, \ldots, r_{k-1}\}$.
Then the sequence $\uu$ can be written as a concatenation of these return words:
$$\uu = r_{d_0}r_{d_1}r_{d_2} \cdots$$
and the \textit{derived sequence} of $\uu$ to the prefix $w$ is the sequence $\dd_\uu(w) = d_0d_1d_2 \cdots$ over the alphabet of cardinality $\# \mathcal{R}_\uu(w) = k$.
The concept of derived sequences was introduced by Durand in  \cite{Dur98}.

\section{The critical exponent and its  relation to return words} \label{S_CriticalExponent}
Let $z \in \mathcal{A}^+$ be a prefix of a periodic sequence $u^\omega$ with $ u \in \mathcal{A}^+$.
We say that $z$ has the \emph{fractional root} $u$ and the \emph{exponent} $e = |z|/|u|$.
We usually write $z =u^e$.
Let us emphasize that a word $z$ can have multiple exponents and fractional roots.
A word $z$ is \emph{primitive} if its only integer exponent is $1$.

Let $\uu$ be a sequence and $u$ its non-empty factor.
The supremum of $e \in \Q$ such that $u^e$ is a factor of $\uu$ is the \emph{index} of $u$ in $\uu$:
\begin{align*}
\ind_\uu(u) = \sup\{e \in \Q: u^e \in \mathcal{L}(\uu)\}\,.
\end{align*}
If the sequence $\uu$ is clear from the context, we will write $\ind(u)$ instead of $\ind_\uu(u)$.

\begin{defi}
The \emph{critical exponent} of a sequence $\uu$ is
\begin{align*}
\CR(\uu) &= \sup \, \{e \in \Q : \text{ there is a non-empty factor of  } \uu \text{ with the exponent } e\} \\
       &= \sup \, \{ \ind_\uu(u) : u \text{ is a non-empty factor of } \uu\}\,.
\end{align*}
\end{defi}

\begin{rem}\label{remark}  Let us comment on the above definition.
\begin{enumerate}
\item If a non-empty factor $u \in \mathcal{L}(\uu)$ is non-primitive, i.e.,  $u=x^k$ for some $x \in \A^+$ and $k \in \mathbb{N}, k\geq 2$, then $\ind_\uu(x) = k\, \ind_\uu(u) > \ind_\uu(u)$. Therefore,  only primitive factors play a role for finding $\CR(\uu)$.
\item If some non-empty factor occurs at least twice in $\uu$, then $\ind(x)> 1$ for some non-empty factor $x$ and so $ \CR(\uu) >1$.
Consequently,  $ \CR(\uu)  >1$  for each sequence $\uu$.
\item We say that $u$ is an \emph{overlapping} factor in $\uu$, if there exist $x, y \in \mathcal{A}^*$ such that $xu=uy\in \mathcal{L}(\uu)$ and $ 0< |x|  <|u|$.
If $\uu$ has an overlapping factor, then $\CR(\uu) > 2$.
Indeed, by \cite{Lo83} the equality $xu=uy$ implies that there exist $a,b \in \mathcal{A}^*$ and $k \in \mathbb{N}$ such that $u=(ab)^ka$, $x=ab$, and $y=ba$.  If $a$ is empty then the assumption $|u| >|x|>0$ forces $k\geq 2$,  otherwise $k \geq 1$.  In both cases  $ \ind_\uu(ab)>2$.
\item If $\uu$ is eventually periodic, then $\CR(\uu)$ is infinite.
\item If $\uu$ is aperiodic and uniformly recurrent, then each factor of $\uu$ has a finite index.
Nevertheless, $\CR(\uu)$ may be  infinite.
A Sturmian sequence may serve as an example of such a sequence if the continued fraction expansion of its slope has unbounded partial quotients, see \cite{DaLe}.
\item If $\uu$ is a binary sequence, then either $11$, $00$, or $0101$ occur in $\uu$.
It means that the critical exponent of a binary sequence is at least $2$.
This value is attained by the famous Thue-Morse sequence, which is,  of course, overlap-free, see \cite{Thue} or \cite{Berstel}.
The critical exponent  of generalized Thue-Morse sequences over multi-letter alphabets were described in \cite{BlBrGlLa}.
\item Recently, Ghareghani and Sharifani introduced a natural generalization of $k$-bonacci sequences to infinite alphabets. They also computed the critical exponent of these sequences, \cite{GhSh}.
\end{enumerate}
\end{rem}

\begin{lem} \label{Lem_CriticalExponentByReturnWords}
Let $\uu$ be a uniformly recurrent  aperiodic sequence.
Then $\emph{cr}(\uu) = \sup \, \{ \emph{ind}_\uu(u) : u \in \mathcal{M}\}$, where
\begin{align*}
\mathcal{M} = \{ u : u  \text{ is a return word to a bispecial factor of } \uu \}\,.
\end{align*}
\end{lem}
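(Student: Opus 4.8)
The plan is to prove the two inequalities of the claimed equality separately. The bound $\sup\{\ind_\uu(u):u\in\mathcal M\}\le\CR(\uu)$ is immediate, since every return word to a bispecial factor is a non-empty factor of $\uu$ and so has index at most $\CR(\uu)$ by definition. For the reverse inequality it suffices to show that every non-empty factor $u$ of $\uu$ satisfies $\ind_\uu(u)\le\ind_\uu(r)$ for some return word $r$ to some bispecial factor of $\uu$; taking suprema over $u$ then yields $\CR(\uu)\le\sup\{\ind_\uu(r):r\in\mathcal M\}$. I would first make three harmless reductions. By Remark~\ref{remark}(1) we may assume $u$ is primitive, as passing to the primitive root only increases the index. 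We may assume $e:=\ind_\uu(u)>1$, since every letter occurring in $\uu$ is a return word to the bispecial factor $\varepsilon$ (the empty word is bispecial because $\uu$, being aperiodic, uses at least two letters) and has index at least $1$. Finally, by Remark~\ref{remark}(5) the index $e$ is finite, and since $|u^f|\in\N$ whenever $u^f\in\mathcal L(\uu)$, the supremum defining $e$ is attained, so $u^e\in\mathcal L(\uu)$.

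The heart of the proof is to pass from $u^e$ to the maximal repetition carrying it. I would fix an occurrence of $u^e$ in $\uu$ and extend it to the left and to the right as far as possible while preserving the period $|u|$, obtaining a factor $R$; this $R$ is finite, for an infinite one-sided periodic extension would yield $\tilde u^N\in\mathcal L(\uu)$ for all $N$ and some conjugate $\tilde u$ of $u$, contradicting Remark~\ref{remark}(5). Write $R=t^E$, where $t$ is the prefix of $R$ of length $|u|$ --- a conjugate of $u$, hence primitive --- and $E=|R|/|u|\ge e$ because $u^e$ is a factor of $R$. By maximality of $R$, the letter of $\uu$ immediately to the left of $R$ is not the one that the period $|u|$ would predict there, and symmetrically on the right. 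Put $b:=$ the prefix of $R$ of length $|R|-|u|$, which is nonempty because $E>1$. Since $R$ has period $|u|$, the word $b$ occurs in $R$ also at position $|u|$; of these two occurrences the outer one carries the ``wrong'' letter on each side and the inner one carries the periodic letters, so $b$ has at least two left extensions and at least two right extensions in $\uu$, i.e.\ $b$ is bispecial.

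It remains only to check that $t$ is a return word to $b$: indeed $\ind_\uu(t)\ge E\ge e$ because $t^E=R\in\mathcal L(\uu)$, so once we know $t\in\mathcal R_\uu(b)$ we are done. This amounts to showing that the occurrences of $b$ at positions $0$ and $|u|$ of $R$ are consecutive occurrences of $b$ in $\uu$, the word between them then being exactly $t$. When $E\ge2$ we have $|b|\ge|u|$, and an occurrence of $b$ strictly in between would force $t^\omega$ to have a period $\ell$ with $1\le\ell<|u|$, i.e.\ $t$ to coincide with a nontrivial conjugate of itself --- impossible since $t$ is primitive. Because $E\ge e$, this already settles every factor $u$ with $\ind_\uu(u)\ge2$; in particular it completes the proof for every aperiodic binary $\uu$, since there some letter (a return word to $\varepsilon$) has index at least $2$, so $\sup\{\ind_\uu(r):r\in\mathcal M\}\ge2$ dispatches all factors of index at most $2$, while those of larger index fall under the case $E\ge e>2$.

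The residual case $1<E<2$, where $|b|<|u|$ and the periodicity argument no longer excludes intermediate occurrences of $b$ inside $R$, is the step I expect to be the main obstacle. There I would instead take $b$ to be the longest bispecial factor that is a prefix of $t^\omega$ and has length at most $|R|-|u|$, and prove that the return word to this $b$ which realizes the repetition $R$ still has length $|u|$ (up to the evident small correction), so that its index is again at least $e$. Pinning down this synchronization between bispecial factors and their return words is exactly the kind of statement analysed in detail later in the paper, so I would either invoke those results or isolate the required synchronization lemma as a short self-contained preliminary.
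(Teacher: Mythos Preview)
Your framework is sound and parallels the paper's: both identify a bispecial factor $b$ with fractional root a conjugate of $u$, then argue about return words to~$b$. The paper, instead of extending to a maximal repetition, simply takes $u$ to have the \emph{largest index among all factors of its length}; this choice immediately makes $b=u^{\ind(u)-1}$ bispecial (otherwise one could shift by one letter and gain index), which is the same conclusion you reach via maximality of~$R$.

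The genuine gap you flag---the case $1<E<2$, where periodicity no longer rules out intermediate occurrences of~$b$---is not handled in the paper by any synchronization argument of the kind you sketch. The paper's fix is much more elementary and avoids the difficulty entirely. Having shown that $u$ is a \emph{concatenation} of return words to~$b$, the paper asks: what if it is a concatenation of at least two? Then, instead of insisting that $u$ itself be a return word, the paper exhibits some other factor with \emph{strictly larger} index, so that $u$ may simply be discarded from the supremum. This is done by a case split on the global quantity $\CR(\uu)$: if $\CR(\uu)>2$ there is already some factor of index $>2>\ind(u)$ and we are done; if $\CR(\uu)\le2$ then $\uu$ is overlap-free, and a short combinatorial argument (using $u'u''u'=sb t$ with $|s|\ge|u'|$, since $|s|<|u'|$ would give an overlap) produces a factor $s$ with $\ind(s)>\ind(u)$.

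So your proposed route---choosing a different bispecial factor and proving a synchronization lemma---is unnecessary and likely harder than what the paper does. The key idea you are missing is that you need not prove $t\in\mathcal R_\uu(b)$ at all; it suffices to show that whenever it fails, $u$ was not contributing to the supremum anyway.
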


\begin{proof}
Let $u \in \mathcal{L}(\uu)$ be a non-empty factor with the index $\ind(u) >1$.
Denote $|u| = n$.
When searching for supremum, we may assume without loss of generality that $u$ is a factor having the largest index among all factors of $\uu$ of length $n$, i.e., $\ind(u) \geq \ind(v)$ for all $v \in \mathcal{L}(\uu)$ of length $n$.
Since $\uu$ is uniformly recurrent, $\ind(u)$ is finite.
We denote
\begin{align*}
z = u^{\ind(u)} = u'u'' \cdots u'u''u' \quad \text{ and } \quad b = u^{\ind(u)-1}\,,
\end{align*}
where $u = u'u''$ and $u'' \neq \varepsilon$.
Clearly, $z = bu''u' = u'u''b$.

Let us show that the word $b$ is a bispecial factor of $\uu$.
The word $z$ is a factor of $\uu$ and so $z$ occurs in $\uu$ at some position $j$, i.e., $$z = u_ju_{j+1} \cdots u_{j + |z| -1}\,.$$
Then the letter $u_{j + |z|}$ which follows the word $z$ is distinct from the first letter of $u''$. Otherwise, we could prolong $z$ to the right, which contradicts the definition of the index of $u = u_ju_{j+1} \cdots u_{j+n-1}$.
Similarly, the letter $u_{j-1}$ which precedes $z$ is distinct from the last letter of $u''$.
Indeed, if those letters are the same, then the factor $u_{j-1}u_{j} \cdots u_{j + n-2}$ of length $n$ has the index at least $\ind(u) + \frac{1}{n}$, which contradicts the choice of $u$.
We can conclude that the factor $b$ is a bispecial factor of $\uu$.

Moreover, since $z = bu''u' = u'u''b$, the word $u = u'u''$ is a concatenation of the return words to the bispecial factor $b$.
It suffices to prove that only the cases when $u$ is a  return word to the bispecial factor $b$ have to be inspected.
Let us assume  that  $u$ is a concatenation of at least two return words to $b$.
It means that
\begin{equation} \label{Eq_Rovnice}
z = ub = sbt  \quad \text{ for some } s, t \text{ such that $s$ is a prefix of } u \text{ and } 0 < |s| < |u|\,.
\end{equation}
We will find another factor of $\uu$ with the index strictly larger than $\ind(u)$, which means that such a factor $u$ can be omitted.
We distinguish three cases:
\begin{itemize}
\item If $\ind(u) \geq 2$, then $|b| \geq |u|$ and both words $u$ and $s$ are prefixes of $b$.
Therefore, the relation \eqref{Eq_Rovnice} implies $us = su$ and we can easily conclude that there is a word $x$ and an integer $k > 1$ such that $u = x^k$.
As mentioned in Remark \ref{remark}, $\ind(x)> \ind(u)$.
\item If $1 < \ind(u) <2 $ and $\CR(\uu) \leq 2$, then by Item 3 of Remark \ref{remark}, $\uu$ has no overlapping factor.
Clearly, $z=u'u''u'$ and $b=u'$ for $u', u'' \neq \varepsilon$.
Then the relation \eqref{Eq_Rovnice} implies $u'v = su'$ for some $v$ and $|u| > |s| \geq |u'|$.
Indeed, if $0 < |s| < |u'|$, then $u'$ is an overlapping factor, which is not possible.
Therefore, $u'$ is a prefix of $s$ and we can easily deduce that
$$
\ind(s) \geq  \frac{|s|+  |u'| }{|s|} > \frac{|u|+ |u'| }{|u|} = \ind(u) \,.
$$
\item If $1 < \ind(u) < 2$ and $\CR(\uu) > 2$, then there is a factor $x \in \mathcal{L}(\uu)$ with $\ind(x) > 2 > \ind(u)$.
\end{itemize}
\end{proof}

\begin{rem} \label{rem_CriticalExponentByReturnWords}
In fact, we proved that it suffices to consider the set
\begin{align*}
\mathcal{M'} = \{ u : u  \text{ is a return word to a bispecial factor of } \uu \text{ with the fractional root } u \}\,
\end{align*}
or, even more specifically, the set
\begin{align*}
\mathcal{M''} = \{ u : u  \text{ is a return word to the bispecial factor } b = u^{\ind(u)-1} \text{ of } \uu \}\,
\end{align*}
instead of $\mathcal{M}$.
Clearly, $\mathcal{M''} \subset \mathcal{M'} \subset \mathcal{M}$.

In Remark \ref{remark}, we emphasize that only primitive factors are relevant for finding the critical exponent.
Let us verify that all return words from the set $\mathcal{M'}$ (and so $\mathcal{M}''$, too) are primitive.
We prove it by contradiction.
Let us suppose that $u \in \mathcal{M'}$ is non-primitive, i.e., $u = x^k$ for some non-empty $x$ and $k \in \N$, $k > 1$.
Since $b$ has the fractional root $u$, $b = u^\ell = x^{k \ell}$ for some $\ell \in \Q$.
Therefore, $ub = x^{k(\ell +1)} = xx^{k\ell}x^{k-1} = xbx^{k-1}$, which contradicts that $u$ is a return word to $b$ in $\uu$.
\end{rem}

\section{The mapping $\mathcal{S}$ on binary words and complementary symmetric Rote sequences}\label{S_mapS_CSRote}

In this section, we introduce a mapping $\S$ which enables us to describe the properties of CS Rote sequences using Sturmian sequences. Nevertheless, this mapping $\S$ can be applied to any binary sequence.

\begin{defi}\label{S}
By $\S$ we denote the mapping $\S: \{0,1\}^+ \mapsto \{0,1\}^*$ such that for every $v_0\in \{0,1\}$ we put $\S(v_0) = \varepsilon$ and for every $v = v_0v_1 \cdots v_{n} \in \{0,1\}^+$ of length at least $2$ we put $\S(v_0v_1 \cdots v_{n}) = u_0u_1 \cdots u_{n-1}$, where
$$
u_{i} = v_i + v_{i+1} \mod 2 \  \ \text{ for all } \ i \in \{0,1, \ldots, n-1\}\,.
$$
Moreover, we extend the domain of $\S$ naturally to $\{0,1\}^\N$: for every $\vv \in \{0,1\}^\N$ we put $\S(\vv) = \uu$, where
$$
u_{i} = v_i + v_{i+1} \mod 2 \  \ \text{ for all } \ i \in \N \,.
$$
\end{defi}

By $E:\{0,1\}^*\mapsto \{0,1\}^*$ we denote the morphism which exchanges the letters, i.e., $E(0)=1$, $E(1)=0$.

\begin{exam}
We have $E(001110) = 110001$ and $\S(001110) = \S(110001)  = 01001$.

\end{exam}

Clearly, the images of $v$ and $E(v)$ under $\S$ coincide for each $v \in \{0,1\}^*$.
Moreover, $\S(x) = \S(y)$ if and only if $x = y$ or $x = E(y)$.
The following rule follows directly from the definition of $\S$:
\begin{equation}\label{jasne}
\S(v_0v_1 \cdots v_{n}) = \S(v_0 v_1\cdots v_k) \S(v_k v_{k+1} \cdots v_{n})\quad \text{  for any $k = 0, \ldots, n$}.
\end{equation}
These observations hold also for infinite sequences.

\begin{lem} \label{lem_Bispecial}
Let $\vv$ be a binary sequence whose language $\mathcal{L}(\vv)$ is closed under $E$.
Then $w\neq \varepsilon$ is a right (left) special factor in $\vv$  if and only if $\S(w)$ is a right (left) special factor in $\S(\vv)$.
\end{lem}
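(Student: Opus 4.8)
The plan is to prove both directions using the factorization rule \eqref{jasne} together with the two structural facts recalled just before the lemma: that $\mathcal{S}(x)=\mathcal{S}(y)$ iff $x=y$ or $x=E(y)$, and that $\mathcal{L}(\vv)$ is closed under $E$. I will argue the "right special'' case; the "left special'' case is symmetric (reverse the roles of prefixes and suffixes throughout).

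First I would settle the forward implication. Suppose $w\neq\varepsilon$ is right special in $\vv$, so $wa, wb\in\mathcal{L}(\vv)$ for the two distinct letters $a,b\in\{0,1\}$. By \eqref{jasne} we have $\mathcal{S}(wa)=\mathcal{S}(w)\mathcal{S}(w_{\text{last}}a)$ where $w_{\text{last}}$ is the last letter of $w$; since $a\neq b$, the one-letter words $\mathcal{S}(w_{\text{last}}a)$ and $\mathcal{S}(w_{\text{last}}b)$ are the two distinct letters of $\{0,1\}$. Hence $\mathcal{S}(w)$ is followed in $\mathcal{L}(\mathcal{S}(\vv))$ by both letters, so $\mathcal{S}(w)$ is right special in $\mathcal{S}(\vv)$. (Here I use that $\mathcal{S}$ maps factors of $\vv$ to factors of $\mathcal{S}(\vv)$, which is immediate from the definition of $\mathcal{S}$ on sequences and \eqref{jasne}.) Note $\mathcal{S}(w)$ may be the empty word when $|w|=1$, but the hypothesis $w\neq\varepsilon$ does not exclude this; the statement remains correct since $\varepsilon$ is right special in $\mathcal{S}(\vv)$ as soon as $\mathcal{S}(\vv)$ contains both letters, which it does because $\mathcal{L}(\vv)$ closed under $E$ forces $01$ or $10$, hence $00$ or $11$ as well — actually I should phrase this more carefully, or simply observe that when $|w|=1$ the factor $w$ right special in $\vv$ already gives, via $wa,wb$, that $\mathcal{S}(wa),\mathcal{S}(wb)$ are the two distinct letters.

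For the converse, suppose $\mathcal{S}(w)$ is right special in $\mathcal{S}(\vv)$, i.e.\ $\mathcal{S}(w)c,\ \mathcal{S}(w)d\in\mathcal{L}(\mathcal{S}(\vv))$ for distinct $c,d\in\{0,1\}$. A factor $\mathcal{S}(w)c$ of $\mathcal{S}(\vv)$ of length $|w|$ is $\mathcal{S}(y)$ for some factor $y$ of $\vv$ of length $|w|+1$ (take a $\mathcal{S}$-preimage inside $\vv$), and then $\mathcal{S}(y')=\mathcal{S}(w)$ for the length-$|w|$ prefix $y'$ of $y$, so $y'\in\{w,E(w)\}$. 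Since $\mathcal{L}(\vv)$ is closed under $E$, either way we obtain (after possibly applying $E$ to $y$, which preserves being a factor of $\vv$) a factor of $\vv$ of the form $wc'$ whose $\mathcal{S}$-image ends in $c$; doing the same with $d$ yields $wd'\in\mathcal{L}(\vv)$ whose $\mathcal{S}$-image ends in $d$. It remains to check $c'\neq d'$: writing $e$ for the last letter of $w$, the last letters of $\mathcal{S}(wc')$ and $\mathcal{S}(wd')$ are $e+c'$ and $e+d'$ mod $2$; these equal $c\neq d$, so $c'\neq d'$. Hence $wc',wd'\in\mathcal{L}(\vv)$ with $c'\neq d'$, i.e.\ $w$ is right special in $\vv$.

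The only genuinely delicate point — and the one I would be most careful about — is the bookkeeping with $E$ in the converse direction: a $\mathcal{S}$-preimage of a factor of $\mathcal{S}(\vv)$ need not literally be a factor of $\vv$, only its $E$-image might be, and one must check that applying $E$ to align the prefix with $w$ does not destroy the "followed by $c$ versus followed by $d$'' distinction. This is handled by the observation that $E$ commutes with $\mathcal{S}$ in the sense that $\mathcal{S}(E(x))=\mathcal{S}(x)$, so the extending letter, read through $\mathcal{S}$, is unaffected by whether we took $y$ or $E(y)$. Everything else is a routine application of \eqref{jasne}.
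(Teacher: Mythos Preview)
Your proposal is correct and follows essentially the same approach as the paper: both directions hinge on the factorization rule \eqref{jasne} applied at the last letter of $w$, together with the fact that $\mathcal{S}$-preimages come in $E$-pairs and $\mathcal{L}(\vv)$ is $E$-closed. The paper's converse is slightly slicker in that it names the preimages directly (writing $\mathcal{S}(w)0=\mathcal{S}(wc)$ and $\mathcal{S}(w)1=\mathcal{S}(wE(c))$ where $c$ is the last letter of $w$), whereas you take an arbitrary preimage and then align it with $w$ via $E$; but the content is the same. Your digression on the $|w|=1$ case is unnecessary and a bit unfinished---you can simply drop it, since the argument already covers that case without modification.
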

\begin{proof}
We will prove the statement for right special factors. The proof for left special factors is analogous. Let  $c$  be the last letter  of $w$.

\noindent $(\Longrightarrow)$: Let $w0$ and $w1$ belong to $\mathcal{L}(\vv)$. Then $\S(w0)$ and $\S(w1)$ belong to $\mathcal{L}(\S(\vv))$.
By the rule \eqref{jasne}, $\S(w0) =\S(w)\S(c0)$ and $\S(w1) =\S(w)\S(c1)$.
As $\S(c0) \neq \S(c1)$, the factor $\S(w)$ is right special in $\S(\vv)$.

\noindent $(\Longleftarrow)$: Let $\S(w)0$  and $\S(w)1$ be in  $\mathcal{L}(\S(\vv))$.
Then $\S(w)0 = \S(w)\S(cc) = \S(wc)$ and $\S(w)1 = \S(w)\S(cE(c)) = \S(wE(c))$.
Since  $\mathcal{L}(\vv)$ is closed under the exchange of letters, all factors  $wc$, $E(wc)$, $wE{(c)}$ and $E(w)c$ are in $\mathcal{L}(\vv)$.
It means that $w$ and $E(w)$ are right special factors in $\vv$.

\end{proof}

A \textit{Rote sequence} is a sequence $\vv$ with the factor complexity $\mathcal{C}_\vv(n) = 2n$ for all $n \in \N, n \geq 1$.
Clearly, all Rote sequences are defined over a binary alphabet, e.g., $\{0,1\}$.
If the language of a Rote sequence $\vv$ is  closed under the exchange of letters, i.e., $E(v)\in \mathcal{L}(\vv)$ for each $v \in \mathcal{L}(\vv)$, the Rote sequence  $\vv$  is called \textit{complementary symmetric} (shortly CS).

Rote in \cite{Ro94} proved that these sequences are essentially connected with Sturmian sequences.

\begin{prop}[\cite{Ro94}] \label{Prop_SturmRote}
Let $\uu$ and $\vv$ be two sequences over $\{0,1\}$ such that $\uu=\S(\vv)$. Then $\vv$ is a~complementary symmetric Rote sequence if and only if $\uu$ is a Sturmian sequence.
\end{prop}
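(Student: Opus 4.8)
The plan is to pass everything through the factor complexities $\mathcal C_{\uu}$ and $\mathcal C_{\vv}$, using three elementary facts about $\S$ valid for an arbitrary binary sequence $\vv$. First, every word $u\in\{0,1\}^{n}$ has exactly two preimages under $\S$ among words of length $n+1$, namely $w$ and $E(w)$ for a uniquely determined $w$ (reconstruct a preimage letter by letter; the two choices for its first letter produce $w$ and $E(w)$). Second, $\S$ maps $\mathcal L(\vv)\cap\{0,1\}^{n+1}$ onto $\mathcal L(\uu)\cap\{0,1\}^{n}$, since an occurrence of $u$ at position $i$ in $\uu$ lifts to the factor $v_iv_{i+1}\cdots v_{i+n}$ of $\vv$. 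Hence $\mathcal C_{\uu}(n)\le\mathcal C_{\vv}(n+1)\le 2\,\mathcal C_{\uu}(n)$, with equality $\mathcal C_{\vv}(n+1)=2\,\mathcal C_{\uu}(n)$ exactly when every factor of $\uu$ of length $n$ has \emph{both} of its $\S$-preimages in $\mathcal L(\vv)$. Third, $\S(\vv)$ is eventually periodic whenever $\vv$ is, and $\vv$ is constant iff $\S(\vv)=0^{\omega}$; so if $\uu$ is aperiodic, then $\vv$ is aperiodic and $\mathcal C_{\vv}(1)=2$.

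The implication ``$\vv$ a CS Rote sequence $\Rightarrow$ $\uu$ Sturmian'' is then short. As $\mathcal L(\vv)$ is closed under $E$, if one $\S$-preimage of a factor $u$ of $\uu$ lies in $\mathcal L(\vv)$ so does the other; at least one always does, hence both do, and therefore $\mathcal C_{\vv}(n+1)=2\,\mathcal C_{\uu}(n)$ for all $n\ge 1$. Substituting $\mathcal C_{\vv}(n+1)=2(n+1)$ gives $\mathcal C_{\uu}(n)=n+1$ for every $n$, and $\uu$ is aperiodic since its complexity is unbounded; thus $\uu$ is Sturmian.

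The converse ``$\uu$ Sturmian $\Rightarrow$ $\vv$ CS Rote'' is the substantive direction and reduces to proving that $\mathcal L(\vv)$ is closed under $E$, i.e.\ that every factor of $\uu$ has both of its $\S$-preimages in $\mathcal L(\vv)$: granting this, $\mathcal C_{\vv}(n+1)=2\,\mathcal C_{\uu}(n)=2(n+1)$ together with $\mathcal C_{\vv}(1)=2$ shows $\vv$ is Rote, and it is complementary symmetric by the closure property. To get the closure I would argue by contradiction: suppose some factor $u$ of $\uu$ with $|u|=n\ge 1$ has a preimage $w\in\mathcal L(\vv)$ while $E(w)\notin\mathcal L(\vv)$. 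Then at every occurrence $i$ of $u$ in $\uu$ the lift $v_iv_{i+1}\cdots v_{i+n}$ is forced to equal $w$, so $v_i$ takes the fixed value $w_0$ whenever $u$ occurs. Now invoke uniform recurrence of Sturmian sequences: there is $R$ so that every factor $t$ of $\uu$ of length $m\ge R$ contains an occurrence of $u$; writing $p$ for the position of the leftmost occurrence of $u$ inside such a $t$ (a number depending on $t$ only), any occurrence of $t$ at position $j$ in $\uu$ forces $v_{j+p}=w_0$ and hence forces the entire lift $v_jv_{j+1}\cdots v_{j+m}$, so this lift depends on $t$ alone. Thus every factor of $\uu$ of length $m\ge R$ has a unique $\S$-preimage in $\mathcal L(\vv)$, whence $\mathcal C_{\vv}(m+1)=\mathcal C_{\uu}(m)=m+1$; so $\mathcal C_{\vv}(k)\le k$ for all large $k$, which forces $\vv$ to be eventually periodic (Morse--Hedlund) and contradicts its aperiodicity. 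This contradiction completes the proof.

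The main obstacle is precisely this last step: one has to notice that a single missing complement propagates, via uniform recurrence, into a unique lift for \emph{every} sufficiently long factor, collapsing $\mathcal C_{\vv}$ below the linear bound imposed by aperiodicity. A geometric route is also available: represent $\uu$ as the coding of an irrational rotation, so that $\vv$ becomes the coding of the same rotation on a circle of length $2$ with respect to the partition $\{[0,1),[1,2)\}$; irrationality of the slope keeps the endpoints $0$ and $1$ on distinct orbits, which yields complexity $2n$, while the translation by $1$ commutes with the rotation and interchanges the two atoms, which gives closedness under $E$. I would nevertheless keep the combinatorial argument as the main proof, since it only uses uniform recurrence of Sturmian sequences and the Morse--Hedlund theorem.
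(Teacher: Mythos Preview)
The paper does not give its own proof of this proposition; it is quoted from Rote's original paper and left without argument. Your proof is correct. The forward direction is immediate from the two-to-one covering property of $\S$, and your converse argument is sound: the key observation that a single factor $u$ whose lift is forced pins down one letter of the lift of every long factor (via uniform recurrence), hence the whole lift, is exactly what is needed to collapse $\mathcal C_{\vv}(k)$ down to $k$ and invoke Morse--Hedlund.

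It is worth noting that Rote's original argument is essentially the geometric one you sketch at the end: CS Rote sequences are realized as codings of an irrational rotation on a circle of circumference $2$ with respect to the two unit arcs, and $\S$ corresponds to folding onto the Sturmian circle of circumference $1$; the involution $x\mapsto x+1$ commutes with the rotation and exchanges the arcs, giving closure under $E$, while irrationality gives complexity $2n$. Your combinatorial route is a genuine alternative: it avoids the rotation model entirely and uses only uniform recurrence and Morse--Hedlund, so it transplants more readily to other settings where a mechanical representation is unavailable. The geometric route, on the other hand, is shorter once the coding is set up and makes the symmetry under $E$ transparent.
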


Let us emphasize that to a given CS Rote sequence $\vv$ there is the unique \emph{associated} Sturmian sequence $\uu$ such that $\uu = \S(\vv)$.
On the other hand, for any Sturmian sequence $\uu$ there exist two \emph{associated} CS Rote sequences $\vv$ and $E(\vv)$ such that  $\uu = \S(\vv)  = \S(E(\vv))$.
However, $\mathcal{L}(\vv) = \mathcal{L}(E(\vv))$.

Analogously, to a given factor $v \in \mathcal{L}(\vv)$ there is a unique associated word $u$ such that $u = \S(v)$ and this word $u$ is a factor of $\uu$.
In addition, to a given factor $u \in \mathcal{L}(\uu)$ there are exactly two associated words $v, E(v)$ such that $\S(v) = \S(E(v)) = u$ and both these words $v, E(v)$ are factors of $\vv$.

\begin{exam}\label{Fib}
Let us underline that for Sturmian sequences $\uu$ and $E(\uu)$ the languages of their associated CS Rote sequences may essentially differ.
Consider the Fibonacci sequence
$${\mathbf f}=abaababaaba\cdots\,,$$ which is the fixed point of the Fibonacci morphism $F: a \to ab$, $b \to a$.

\begin{itemize}
\item
If $a=0$ and $b=1$, then the associated CS Rote sequence starting with $0$ is $\vv=001110011100\cdots$.
The prefix $w$ of $\vv$ of length 7 is $w=0011100=(00111)^{\frac{7}{5}}$, i.e., $w$ has the fractional root $00111$ and the exponent $\frac{7}{5}$.
\item
If $a=1$ and $b=0$, then the associated CS Rote sequence starting with $0$ is $\vv'=011011001001\cdots$.
 The prefix $w'$ of $\vv'$ of length 7 is $w'=0110110=(011)^{\frac{7}{3}}$, i.e., $w'$ has the fractional root $011$ and the exponent $\frac{7}{3}$.
\end{itemize}
We will show later in Example~\ref{Ex_Rozdil_cr_u_Eu} that even the critical exponent of CS Rote sequences associated with $\uu$ and $E(\uu)$ may be different.
\end{exam}

In the next section, we will explain that the relation between the shortest fractional root of a factor $v$ and the shortest fractional root of $\S(v)$ is influenced by the number of letters $1$ occurring in the shortest fractional root of $\S(v)$. This is the reason for the following definition and lemma.

\begin{defi}
A word $u = u_0u_1 \cdots u_{n-1} \in \{0, 1\}^*$ is called \emph{stable} if  $|u|_1 = 0 \mod 2$. Otherwise, $u$ is \emph{unstable}.
\end{defi}

\begin{lem}\label{pomocny}  Let $\S: \{0,1\}^+\mapsto  \{0,1\}^*$.
\begin{itemize}
\item[\textit{(i)}] If $0$ is a prefix of $v \in \{0,1\}^*$, then $\S(v0)$ is stable.
\item[\textit{(ii)}] For every $u \in \{0,1\}^*$ there exists a unique $w \in \{0,1\}^+$ with a prefix $0$ such that $u = \S(w)$.  Moreover, $w$ has a suffix $0$ if and only if $u$ is stable.
\item[\textit{(iii)}] If $0$ is a prefix of $w$, then $\S(vw) = \S(v0)\S(w)$.
\item[\textit{(iv)}] Let $0$ be a prefix of $v$ and $v'$.
Then $\S(v')$ is a prefix of $\S(v)$ if and only if $v'$ is a prefix of $v$.
\end{itemize}
\end{lem}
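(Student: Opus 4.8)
The plan is to isolate two elementary facts about $\S$ — a parity identity and a reconstruction (invertibility) statement — and then read off all four items from them. \textbf{Fact A (parity).} For every nonempty word $w=w_0w_1\cdots w_m$ one has $|\S(w)|_1\equiv w_0+w_m\pmod 2$: indeed $|\S(w)|_1\equiv\sum_{i=0}^{m-1}(w_i+w_{i+1})\pmod2$ by Definition~\ref{S}, and in this telescoping sum each inner letter $w_1,\dots,w_{m-1}$ occurs twice and cancels modulo $2$, leaving $w_0+w_m$. \textbf{Fact B (reconstruction).} For any $u=u_0\cdots u_{n-1}\in\{0,1\}^*$ and any letter $a\in\{0,1\}$ there is exactly one word $w$ with first letter $a$ and $\S(w)=u$: the equation $\S(w)=u$ is literally the recurrence $w_{i+1}=w_i+u_i\bmod2$, which together with $w_0=a$ determines $w=w_0w_1\cdots w_n$ uniquely by induction, and conversely this $w$ does satisfy $\S(w)=u$; for $u=\varepsilon$ it gives $w=a$. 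Both facts are immediate from Definition~\ref{S}.

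Item (i) is Fact A applied to the word $v0$, whose first and last letters are both $0$, so $|\S(v0)|_1\equiv0$ and $\S(v0)$ is stable. For item (ii), existence and uniqueness of a $w$ with prefix $0$ and $\S(w)=u$ is Fact B with $a=0$; for the ``moreover'' part, summing the recurrence (or reusing Fact A) gives $w_n\equiv w_0+|u|_1\equiv|u|_1\pmod2$, hence $w$ ends in $0$ exactly when $|u|_1$ is even, i.e.\ when $u$ is stable (and for $u=\varepsilon$ one gets $w=0$, consistent with $\varepsilon$ being stable).

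Item (iii) I would get straight from the concatenation rule~\eqref{jasne}: writing $w=w_0w_1\cdots w_m$ with $w_0=0$ and splitting the word $vw$ at the index carrying $w_0$ — recalling that \eqref{jasne} duplicates the boundary letter — yields $\S(vw)=\S(vw_0)\,\S(w_0w_1\cdots w_m)=\S(v0)\,\S(w)$. For item (iv): the implication ``$v'$ a prefix of $v$ $\Rightarrow$ $\S(v')$ a prefix of $\S(v)$'' is again \eqref{jasne}, splitting $v$ at the last index of $v'$ (this direction needs no hypothesis on the first letters). Conversely, if $\S(v')$ is a prefix of $\S(v)$ then $|\S(v')|\le|\S(v)|$, so $|v'|\le|v|$ and $v$ has a prefix $\hat v$ of length $|v'|$; by the implication just proved, $\S(\hat v)$ equals the length-$|\S(v')|$ prefix of $\S(v)$, i.e.\ $\S(\hat v)=\S(v')$, and since $\hat v$ and $v'$ both begin with $0$, the uniqueness in item (ii) forces $\hat v=v'$, so $v'$ is a prefix of $v$.

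I do not expect any genuine obstacle here: the whole lemma is a bookkeeping exercise once Facts A and B are in place. The only points to watch are tracking lengths (note $|\S(w)|=|w|-1$), the degenerate cases where an argument is a single letter or $\varepsilon$ (e.g.\ in (iii), $v=\varepsilon$ is trivial since $\S(0)=\varepsilon$), and invoking \eqref{jasne} with the correctly duplicated boundary letter in items (iii) and (iv).
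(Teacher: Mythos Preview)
Your proposal is correct and follows essentially the same approach as the paper: your Fact~A is exactly the telescoping computation the paper uses for (i), your Fact~B is the recurrence reconstruction the paper uses for (ii), and both you and the paper derive (iii) from~\eqref{jasne}. The only difference is cosmetic: for (iv) the paper simply says ``follows directly from the definition of $\S$'', whereas you spell out the converse direction via the uniqueness in (ii); this is a harmless (indeed welcome) elaboration of the same idea.
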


\begin{proof}
\begin{itemize}
\item[(i)]  Let $v = v_0v_1\cdots v_{n-1}$, $n =|v|$, and $v_0=0$.
Put $v_n =0$. Then $\S(v0) = u_0u_1\cdots u_{n-1}$, where $u_i =v_{i} + v_{i+1} \mod 2 $ for every $ i\in \{0,1, \ldots, n-1\}$.
It implies
$$|\S(v0)|_1 = \sum_{i=0}^{n-1}u_i = v_0 + v_n = 0 \mod 2\,.$$
\item[(ii)]  Let $u= u_0u_1\cdots u_{m-1}$ and $m =|u|$.
We look for $w =w_0 w_1\cdots w_m$ such that $u_i = w_i+w_{i+1} \mod 2$ for every $i\in \{0,1,\ldots, m-1\}$.
Clearly, these equations can be equivalently rewritten as
\begin{equation} \label{Eq_Kongruence}
u_i = w_{i+1} - w_{i} \mod 2 \quad \text{ for every } i\in \{0,1,\ldots, m-1\}\,.
\end{equation}
Then starting with $w_0=0$ and summing up the equations \eqref{Eq_Kongruence}  for $i=\{0, 1, \ldots, j-1\}$, we determine the letter $w_j$ of $w$ as $w_{j} = \sum_{i=0}^{j-1} u_i \mod 2$.
In particular, $w_m = |u|_1\mod 2$.
\item[(iii)] It is a particular case of the equation \eqref{jasne}.
\item[(iv)] It follows directly from  the definition of $\S$.
\end{itemize}
\end{proof}

\section{The relation between the indices of factors in $\vv$ and $\S(\vv)$} \label{S_v_andSv}

In this section, we provide a tool for determining  the critical exponent of a binary sequence $\vv$  whose language is closed under the exchange of letters.
For any factor $v$  of such a sequence, $\ind_\vv(v) =\ind_\vv(E(v))$ and we can consider only factors of $\vv$ starting with $0$ without loss of generality.

\begin{lem}\label{prevod}
Let $\vv$ be a binary aperiodic uniformly recurrent sequence whose language is closed under $E$.
Denote $\uu = \S(\vv)$.
For a non-empty factor $v \in \mathcal{L}(\vv)$ with the prefix $0$ and $\emph{ind}_\vv(v) > 1$, there exists a~stable factor $u \in \mathcal{L}(\uu)$  such that
\begin{equation}\label{vztah}
\emph{ind}_\uu(u) +  \tfrac{1}{|u|} = \emph{ind}_\vv(v)  \quad  \text{and} \quad u= \S(v0)\,.
\end{equation}
And  vice versa, for a non-empty stable factor $u \in \mathcal{L}(\uu)$, there exists a factor $v \in \mathcal{L}(\vv)$ with the prefix $0$ satisfying
\eqref{vztah}\,.
\end{lem}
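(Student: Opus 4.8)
The plan is to exploit the factorization rule for $\S$ to convert a high-power occurrence of $v$ in $\vv$ into an occurrence of a power of $\S(v0)$ in $\uu = \S(\vv)$, carefully tracking one extra letter that the map $\S$ "forgets" at the right end. Write $e = \ind_\vv(v)$ and decompose $v^e = v^{e-1}v = b v$ where $b = v^{e-1}$, so that $v^e$ occurs in $\vv$ starting at some position $j$; let $c$ be the letter immediately following this occurrence of $v^e$. Since $v$ has prefix $0$, Lemma~\ref{pomocny}(iii) gives $\S(v^e c) = \S(v0)\S(v0)\cdots\S(v0)\S(v c')$ where the blocks $\S(v0)$ come from cutting at each internal boundary where the next letter is the prefix $0$ of $v$; more precisely, repeatedly applying the cut rule at each copy of $v$ yields that $\S$ of the occurrence $v^e c$ in $\vv$ equals $u^{\lfloor e\rfloor}$ times the appropriate prefix of $u$, where $u := \S(v0)$. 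The key numerical point is a length count: $|v^e| = e|v|$ while $|u| = |v|$, and the occurrence of $v^e$ in $\vv$ of length $e|v|$ produces under $\S$ an image of length $e|v| - 1$; dividing by $|u| = |v|$ shows that the exponent of $u$ realized inside $\uu$ is exactly $e - \tfrac{1}{|u|}$. This gives $\ind_\uu(u) \ge \ind_\vv(v) - \tfrac{1}{|u|}$.

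For the reverse inequality $\ind_\uu(u) + \tfrac{1}{|u|} \le \ind_\vv(v)$, I would argue by contradiction: an occurrence of $u^{f}$ in $\uu$ with $f$ slightly larger than $\ind_\uu(u)$ pulls back, via Lemma~\ref{pomocny}(ii), to a unique word $w$ with prefix $0$ and $\S(w) = u^{f}$ of length $f|u| + 1$; using that $u$ is stable and Lemma~\ref{pomocny}(ii)'s suffix criterion together with $u = \S(v0)$ (so $v0$ is the unique prefix-$0$ preimage of $u$, and $v0$ has suffix $0$, confirming stability of $u$ — this is where Lemma~\ref{pomocny}(i) enters), one checks that $w$ begins with $v$ repeated $\lfloor f\rfloor$ times, forcing $v^{f - 1/|u|}$ into $\mathcal{L}(\vv)$ up to the closure-under-$E$ subtlety handled by Lemma~\ref{lem_Bispecial} and the remark that $\ind_\vv(v) = \ind_\vv(E(v))$. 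Letting $f \to \ind_\uu(u)$ gives $\ind_\vv(v) \ge \ind_\uu(u) + \tfrac{1}{|u|}$, completing the equality.

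It remains to verify the structural claims: that $u = \S(v0)$ is stable (immediate from Lemma~\ref{pomocny}(i) since $v$ has prefix $0$), that $u$ is a genuine factor of $\uu$ (because $v0 \in \mathcal{L}(\vv)$ as $v$ occurs followed by something, and if it is followed by $1$ we use $E(v)0 = E(v1) \in \mathcal{L}(\vv)$ together with $\S(E(v)0) = \S(v0)$... one must be slightly careful here and instead note $v0$ or $v1$ is a factor, and in the latter case $\S(v1)$ is still what we need — I would phrase the lemma's $u$ as the $\S$-image of whichever one-letter extension actually occurs, then reconcile with $\S(v0)$ using that $v^e$ contains $v0$ internally whenever $e > 1$), and conversely that the pulled-back $w$ from a stable $u$ does start with $0$ and yields a factor $v$ with prefix $0$ via Lemma~\ref{pomocny}(iv). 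The main obstacle I anticipate is precisely this bookkeeping at the boundary: ensuring the "$+\tfrac{1}{|u|}$" is accounted for consistently in both directions and that the parity/stability hypothesis is invoked exactly where the preimage's suffix matters, rather than being lost in the passage between $\vv$ and $\uu$. The rest is the routine length arithmetic sketched above.
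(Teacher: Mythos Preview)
Your overall strategy matches the paper's: use Lemma~\ref{pomocny}(iii) to factor $\S(v^kv')$ as $(\S(v0))^k\S(v')$ and count lengths, then do the reverse pullback via Lemma~\ref{pomocny}(ii) and (iv). However, several steps in your sketch are genuinely wrong, not merely imprecise. First, the decomposition $v^e = v^{e-1}v = bv$ is false for non-integer $e$: fractional powers peel off from the \emph{left}, so the correct form is $v^e = v^k v'$ with $k=\lfloor e\rfloor$ and $v' = v^{e-k}$ a prefix of $v$ (this is exactly what the paper does, writing $\ind_\vv(v)=k+\theta$ and $v'=v^\theta$). Your version would have $v'$ as a \emph{suffix}, which breaks the application of Lemma~\ref{pomocny}(iii) since you no longer know that the next block has prefix $0$.

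Second, your reverse inequality is internally inconsistent. You cannot take ``$f$ slightly larger than $\ind_\uu(u)$'' --- by definition of index no such $f$ exists; in fact, since $\vv$ is aperiodic uniformly recurrent, the index is finite and attained, so one simply takes $f=\ind_\uu(u)$. More seriously, the sign in ``forcing $v^{f-1/|u|}$ into $\mathcal{L}(\vv)$'' is backwards: if $\S(w)=u^f$ then $|w|=f|u|+1$, and once you verify $w$ has fractional root $v$ (with $|v|=|u|$) its exponent is $f+\tfrac{1}{|u|}$, not $f-\tfrac{1}{|u|}$. With your sign you would only obtain $\ind_\vv(v)\ge \ind_\uu(u)-\tfrac{1}{|u|}$, which does not give the claimed conclusion. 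The paper handles this cleanly by writing $\ind_\uu(u)=\ell+\eta$, setting $u'=u^\eta$, pulling back to $v'$ with prefix $0$ via Lemma~\ref{pomocny}(ii), using (iv) to see $v'$ is a prefix of $v$, and concluding $v^\ell v'\in\mathcal{L}(\vv)$ has exponent $\ell+\eta+\tfrac{1}{|u|}$. No contradiction argument or limiting $f$ is needed. Finally, your worry about whether $v0\in\mathcal{L}(\vv)$ dissolves once you use $\ind_\vv(v)>1$: then $vv_0=v0$ is a prefix of $v^{\ind_\vv(v)}\in\mathcal{L}(\vv)$, so there is no need to juggle $\S(v1)$ versus $\S(v0)$.
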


\begin{proof}
For a given $n \in \mathbb{N}, n\geq 1$, consider the set $\mathcal{K}_n$  of factors  $v \in \mathcal{L}(\vv)$ of length $n$ with the prefix $0$ and $\ind_\vv(v) > 1$.
First, we show that the mapping $v \mapsto \S(v0)$ is a bijection between $\mathcal{K}_n$ and the set of all stable factors of $\uu$ of length $n$.

Indeed, if  $v \in \mathcal{K}_n$, then $v0  \in \mathcal{L}(\vv)$.
The factor $u: = \S(v0)$ belongs to $ \mathcal{L}(\uu)$, $|u| = |v|$, and by Item (i) of  Lemma \ref{pomocny}, $u$ is stable.
On the other hand, if $u \in \mathcal{L}(\uu)$ is stable and of length $n$,  then by Item (ii) of Lemma \ref{pomocny}, there exists a unique $w$ such that $0$ is a prefix and a suffix of $w$ and $\S(w) = u$.
As $\mathcal{L}(\vv)$ is closed under $E$,  necessarily $w \in \mathcal{L}(\vv)$ and $w =v0$  for some $v$ with the prefix $0$.
In particular, $\ind_\vv(v) > 1$. As $u =\S(w) =\S(v0)$, the lengths of $u$ and $v$ coincide.
\medskip

Now we show that any $v  \in  \mathcal{K}_n$ and its image $u =\S(v0)$ satisfy  \eqref{vztah}.
Find $k \in \mathbb{N}, k\geq 1$, and $\theta \in (0, 1]$ such that  $\ind_\vv(v) =k + \theta$.
Denote $v' = v^\theta$.
Obviously, $v' \neq \varepsilon$, $v'$ is a prefix of $v$ and $0$ is a prefix of $v'$.
Applying Item (iii) of Lemma \ref{pomocny}, we get $\S(v^kv')= (\S(v0))^k \S(v')$.
Clearly, $|\S(v')| = |v'| -1$, $|u| = |v|$, and by Item (iv) of Lemma \ref{pomocny}, $\S(v')$ is a prefix of $\S(v0)$.
For $u = \S(v0)$ it means that
$$\ind_\uu(u) \geq  k + \tfrac{|S(v')|}{|u|} = k + \tfrac{|v'|}{|v|} - \tfrac{1}{|u|} = k + \theta - \tfrac{1}{|u|} = \ind_\vv(v)  - \tfrac{1}{|u|}\,.$$

To show the opposite inequality, we find $\ell \in \mathbb{N}$  and $\eta \in [0,1)$  such that   $\ind_\uu(u) =\ell + \eta$.
Denote $u' = u^\eta$.
Using Item (ii) of Lemma \ref{pomocny},  we find $v'$ with the prefix $0$ such that $u' = \S(v')$.
By Item (iv), $v'$ is a prefix of $v$, and by Item (iii), $u^\ell u' = (\S(v0))^\ell \S(v') = \S(v^\ell v')$.
Therefore, $v^\ell v' \in \mathcal{L}(\vv)$  and $$\ind_\vv(v)  \geq \ell+\eta + \tfrac{1}{|u|} = \ind_\uu(u)+ \tfrac{1}{|u|}\,.$$
\end{proof}

As explained in Lemma \ref{Lem_CriticalExponentByReturnWords}, only return words to bispecial factors play a role for the determination of the critical exponent of a sequence.
More specifically, we can restrict ourselves to factors from the set $\mathcal{M}'$ (or $\mathcal{M}''$) introduced in Remark \ref{rem_CriticalExponentByReturnWords}.

\begin{lem} \label{CoJeToProU}
Let $\vv$ be a binary sequence whose language is closed under $E$.
Assume that $v$ with the prefix $0$ is a return word in $\vv$ to a bispecial factor $b = v^{e-1}$, where  $e> 2$.
Denote $u =\S(v0)$.
Then

--  either $u$ is a stable return word in $\S(\vv)$ to a bispecial factor with the fractional root $u$;

-- or  $u = x^2$, where $x$ is an unstable return word in $\S(\vv)$ to a bispecial factor with the fractional root~$u$.
\end{lem}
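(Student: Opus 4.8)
The plan is to work with $\uu:=\S(\vv)$ and to show that the bispecial factor we are after is $B:=\S(b)$. First I would record that $v$ is primitive: since $v$ is a return word to $b$ and $b$ has fractional root $v$, this is exactly the situation of Remark~\ref{rem_CriticalExponentByReturnWords} (if $v=x^k$ with $k\ge2$, then $vb=xbx^{k-1}$, so $b$ would reoccur inside $vb$ at distance $|x|<|v|$, contradicting that $v$ is a return word to $b$); moreover, reading two consecutive occurrences of $b$ at distance $|v|$ gives $v^{e}=vb\in\mathcal{L}(\vv)$, hence $\S(v^e)\in\mathcal{L}(\uu)$. Next I would compute the $\S$-image of powers of $v$. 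Since every non-empty prefix of $v$ begins with $0$, iterating Lemma~\ref{pomocny}(iii) and using Lemma~\ref{pomocny}(iv) shows that for every rational $t\ge1$ with $v^{t}\in\mathcal{L}(\vv)$ the word $\S(v^{t})$ is the prefix of $u^{\omega}$ of length $|v|t-1$. Applying this to $t=e-1$ and $t=e$ and setting $g:=(e-1)-\tfrac{1}{|u|}$, we obtain $B=u^{g}$ and $\S(v^{e})=u^{g+1}$; here $g\ge1$ because $e>2$ forces $|v|e\ge2|v|+1$, i.e.\ $|B|=|v|(e-1)-1\ge|v|=|u|$. Finally, $u$ is stable by Lemma~\ref{pomocny}(i), $B$ is bispecial in $\uu$ by Lemma~\ref{lem_Bispecial} (note $b\ne\varepsilon$), and $B$ has fractional root $u$ because it is a prefix of $u^{\omega}$.

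The heart of the argument is the dichotomy for $u=\S(v0)$: either $u$ is primitive, or $u=x^{2}$ with $x$ unstable and primitive. The plan here is to prove the auxiliary fact that \emph{if $u=z^{m}$ for some stable word $z$ and some $m\ge2$, then $v$ is not primitive}, and then to deduce the dichotomy from it using that $u=\S(v0)$ is stable. For the auxiliary fact I would use that $v0$ is the unique word with prefix $0$ mapping to $u$ (Lemma~\ref{pomocny}(ii)) and that it ends in $0$ because $u$ is stable; writing $\tilde z0$ for the unique word with prefix $0$ mapping to $z$ (which likewise ends in $0$, since $z$ is stable), an iteration of Lemma~\ref{pomocny}(iii) shows $\S(\tilde z^{m}0)=z^{m}=u$, so uniqueness forces $v0=\tilde z^{m}0$, whence $v=\tilde z^{m}$ is non-primitive --- a contradiction. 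Now let $x$ be the primitive root of $u$. If $x$ were stable, the auxiliary fact with $z=x$ would contradict primitivity of $v$; so $x$ is unstable, and since $u$ is stable the multiplicity of $x$ in $u$ is even, say $2\ell$. Applying the auxiliary fact once more with the stable word $z=x^{2}$ forces $\ell=1$, i.e.\ $u=x^{2}$.

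It then remains to check that $u$ (respectively $x$ in the second case) really is a single return word to $B$ in $\uu$. Fixing an occurrence of $\S(v^{e})=u^{g+1}$ in $\uu$, the factor $B=u^{g}$ occurs in this window at relative positions $0$ and $|u|$, with the word $u$ in between. If $u$ is primitive, it coincides with none of its proper rotations, hence occurs in $u^{g+1}$ only at multiples of $|u|$; since $g\ge1$, the factor $B$ begins with a full copy of $u$, so $B$ occurs in the window only at $0$ and $|u|$, and any occurrence of $B$ in $\uu$ strictly between these two absolute positions would have to lie inside the window --- impossible. Thus these two occurrences of $B$ are consecutive and $u$ is the return word between them; this is the first alternative, and $u$ is stable. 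If instead $u=x^{2}$, the same reasoning applied to $\S(v^{e})=x^{2g+2}$ and $B=x^{2g}$ shows that $B$ occurs in the window exactly at $0,|x|,2|x|$, so consecutive occurrences of $B$ are at distance $|x|$ with return word $x$; as $x$ is unstable and $B$ has fractional root $u=x^{2}$, this is the second alternative. The two steps I expect to require the most care are the preimage (``gluing'') computation underlying the dichotomy, where primitivity of $v$ is essential, and this last step, where one must rule out intermediate occurrences of $B$ so that $u$ (or $x$) is a genuine return word rather than a product of several return words.
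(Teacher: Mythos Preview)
Your proof is correct and follows essentially the same approach as the paper: both work with $B=\S(b)$, verify it is bispecial in $\S(\vv)$ with fractional root $u$, and split into the two cases according to whether $u$ is primitive or $u=x^{2}$ with $x$ unstable. The only organizational difference is the direction of the argument: the paper distinguishes the cases by whether $B$ has an inner occurrence in $uB$, obtains a shorter return word $u'$, and uses the commutation $u'u''=u''u'$ together with primitivity of $v$ to force $u=x^{2}$; you instead first isolate the auxiliary fact ``$u=z^{m}$ with $z$ stable and $m\ge2$ contradicts primitivity of $v$'' via preimages under $\S$, deduce the dichotomy on $u$ abstractly, and only then verify the return-word property by the standard fact that a primitive word occurs in its own power only at multiples of its length. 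The ingredients are the same; your packaging of the ``stable power implies $v$ non-primitive'' step as a standalone lemma is clean and makes the two applications (to $x$ and to $x^{2}$) transparent.
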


\begin{proof}
The factor $\S(b)$ is bispecial in $\S(\vv)$ by Lemma \ref{lem_Bispecial}.
Moreover, by the rule \eqref{jasne}, we can write $\S(b) = \S(v^{e-1}) = \S(v0)^{f}$ for $f = e-1-\frac{1}{|v|} \geq 1$. Thus $\S(b)$ has the fractional root $u = \S(v0)$.

The word $vb$ is a complete return word to $b$ in $\vv$ and thus $vb =bw$ for some $w$.
Note that $0$ is the first letter of $b$. We denote the last letter of $b$ by $z$.  By the rule \eqref{jasne}, we get
$\S(v0)\S(b) = \S(b)\S(zw)$. It means that $\S(b)$ is a prefix and a suffix of the word $\S(v0)\S(b)$.
We discuss two cases:

--  $\S(b)$  has exactly two occurrences in $\S(v0)\S(b)$, one as a prefix   and one  as a suffix. In this case  $u= \S(v0)$ is a return word to $\S(b)$ in $\S(\vv)$  and by Item (i) of Lemma \ref{pomocny},  $u$ is stable.

\medskip
-- $\S(b)$  occurs in  $\S(v0)\S(b)$ as an inner factor. In this case, there exists a return word   $u'\neq \varepsilon$ to $\S(b )$  such that  $|u'| <|u| $ and $u'\S(b)$ is a proper prefix of $\S(v0)\S(b)$.
We take the word $b'$ such that $b= v b'$. Clearly, $b'$ has the prefix $0$ and so $\S(b) = u\S(b')$.
Then $u'\S(b) = u'u\S(b')$ is a proper prefix of $\S(v0)\S(b) = u\S(b) = uu\S(b')$.
In other words, $u'uu'' = uu$ for some non-empty $u''$ and consequently,
$u =u'u'' = u''u'$.
This implies the existence of $x \in \{0,1\}^+$ and $k', k'' \in \mathbb{N}, k', k''\geq 1$ such that $u' =x^{k'}$  and $u'' = x^{k''}$.
If we denote  $k =k'+k'' \geq 2$, we can write $u=x^k$.

We show that $x$ is unstable and $k=2$.
Indeed, assume $x$ is stable, then by Item (ii) of Lemma \ref{pomocny}, we find a unique $y$ with the prefix $0$ such that $ x =\S(y0)$.
Applying  Item (iii), we obtain $\S(v0)=u= x^k = (\S(y0))^k = \S(y^k0)$ and thus $v =y^k$.
Nevertheless, the factor $v$ is primitive as explained in Remark \ref{rem_CriticalExponentByReturnWords}. Thus this is a contradiction.

Since $x$ is unstable and $u=x^k$ is stable, necessarily  $k = 2p$ for some integer $p\geq 1$.
Now we deduce that $k=2$. Indeed, if  $p\geq 2$, then  $u$ is a $p$-power of the stable factor  $x^2$ which yields a contradiction with the primitivity of $v$ as above.
Finally, $k=2$ implies $k' = 1$ and $u' = x$ is an unstable return word to the bispecial factor $\S(b)$ in $\S(\vv)$.
\end{proof}

\begin{thm} \label{Lem_PrevodKritExpoRoteNaSturm}
Let $\vv$ be a binary aperiodic uniformly recurrent sequence whose language is closed under $E$.
Denote  $\uu = \S(\vv)$,

$A_1 =  \left\{ \emph{ind}_\uu(u) + \tfrac{1}{|u|}  : u \text{ is a stable  return word to a bispecial factor of } \uu \right\} \ $ and

$A_2 =  \left\{\tfrac12 \bigl(\emph{ind}_\uu(u) + \tfrac{1}{|u|} \bigr)  : u \text{ is an unstable  return word to a bispecial factor of } \uu \right\}  \,.$
\medskip

\noindent Then
$$ \emph{cr}(\vv) = \sup  \bigl(A_1\cup A_2\bigr)\,.$$
\end{thm}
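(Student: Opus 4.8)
The plan is to combine the reduction to return words to bispecial factors (Lemma~\ref{Lem_CriticalExponentByReturnWords}, refined in Remark~\ref{rem_CriticalExponentByReturnWords}) with the index transfer formula of Lemma~\ref{prevod} and the structural dichotomy of Lemma~\ref{CoJeToProU}. First I would note that since $\mathcal{L}(\vv)$ is closed under $E$, we have $\ind_\vv(v) = \ind_\vv(E(v))$, so when computing $\CR(\vv)$ we may restrict to factors of $\vv$ beginning with $0$. By Lemma~\ref{Lem_CriticalExponentByReturnWords} and Remark~\ref{rem_CriticalExponentByReturnWords}, $\CR(\vv) = \sup\{\ind_\vv(v) : v \in \mathcal{M}''(\vv)\}$, where $\mathcal{M}''(\vv)$ consists of return words $v$ to a bispecial factor $b = v^{\ind_\vv(v)-1}$ of $\vv$; moreover each such $v$ is primitive. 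Discarding the (at most one) trivial case where the supremum is not $>2$ is harmless: if $\CR(\vv)\le 2$ the statement can be checked directly, and otherwise the relevant factors all have $e = \ind_\vv(v) > 2$, which is exactly the hypothesis of Lemma~\ref{CoJeToProU}.

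Next I would establish the inequality $\CR(\vv) \le \sup(A_1 \cup A_2)$. Take any $v \in \mathcal{M}''(\vv)$ with prefix $0$ and $e = \ind_\vv(v) > 2$, and set $u = \S(v0)$. By Lemma~\ref{prevod}, $u$ is a stable factor of $\uu$ with $\ind_\uu(u) + \tfrac{1}{|u|} = \ind_\vv(v)$. By Lemma~\ref{CoJeToProU}, either $u$ is itself a stable return word to a bispecial factor of $\uu$ — giving $\ind_\vv(v) \in A_1$ — or $u = x^2$ with $x$ an unstable return word to a bispecial factor of $\uu$; in the latter case $\ind_\uu(x) = 2\,\ind_\uu(u) + 1$ and $|x| = |u|/2$ (the standard relation between the index of a square and the index of its root, cf. Item~1 of Remark~\ref{remark}), so a short computation gives $\ind_\vv(v) = \ind_\uu(u) + \tfrac{1}{|u|} = \tfrac12\bigl(\ind_\uu(x) + \tfrac{1}{|x|}\bigr) \in A_2$. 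Hence every $\ind_\vv(v)$ with $v \in \mathcal{M}''(\vv)$ lies in $A_1 \cup A_2$, so taking suprema yields $\CR(\vv) \le \sup(A_1\cup A_2)$.

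For the reverse inequality I would argue that every element of $A_1 \cup A_2$ is bounded above by $\CR(\vv)$. Let $u$ be a stable return word to a bispecial factor of $\uu$; we may assume $u$ has fractional root $u$ itself (passing to the primitive root only increases the index, and primitivity is forced anyway as in Remark~\ref{rem_CriticalExponentByReturnWords}). By the "vice versa" part of Lemma~\ref{prevod}, there is a factor $v$ of $\vv$ with prefix $0$ and $\ind_\vv(v) = \ind_\uu(u) + \tfrac{1}{|u|}$; hence this element of $A_1$ is at most $\CR(\vv)$. For an unstable return word $u$ to a bispecial factor of $\uu$, the factor $u^2$ is stable (twice an unstable Parikh count is even) and is itself a factor of $\uu$; applying the "vice versa" direction of Lemma~\ref{prevod} to $u^2$ — or more carefully, to the relevant stable factor whose existence Lemma~\ref{CoJeToProU} guarantees in the square case — produces a factor $v$ of $\vv$ with $\ind_\vv(v) = \ind_\uu(u^2) + \tfrac{1}{|u^2|} = \tfrac12\bigl(\ind_\uu(u) + \tfrac{1}{|u|}\bigr)$, so this element of $A_2$ is also $\le \CR(\vv)$. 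Combining the two inequalities gives $\CR(\vv) = \sup(A_1 \cup A_2)$.

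The main obstacle, I expect, is the bookkeeping in the square case: one must be careful that when $u = \S(v0) = x^2$ the word $x$ is genuinely a return word to the bispecial factor $\S(b)$ of $\uu$ (not merely a factor with large index), and conversely that an arbitrary unstable return word $x$ to a bispecial factor of $\uu$ does arise this way — i.e. that $x^2$ (equivalently the stable factor produced by Lemma~\ref{CoJeToProU}) pulls back under $\S$ to a genuine return word $v$ to a bispecial factor of $\vv$ with the claimed index. Lemma~\ref{CoJeToProU} supplies exactly the forward direction of this correspondence, and Lemma~\ref{lem_Bispecial} together with Lemma~\ref{prevod} the backward direction, so the proof is really an exercise in matching up these two lemmas and verifying the arithmetic identity $\ind_\uu(x^k) + \tfrac{1}{|x^k|} = \tfrac1k\bigl(\ind_\uu(x) + \tfrac1{|x|}\bigr)$ for $k = 2$; the remaining subtlety is handling the degenerate low-exponent cases separately so that the hypothesis $e > 2$ of Lemma~\ref{CoJeToProU} is available.
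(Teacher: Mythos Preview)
Your outline follows the paper's proof closely: both directions combine Lemma~\ref{Lem_CriticalExponentByReturnWords}, Lemma~\ref{prevod}, and Lemma~\ref{CoJeToProU} in exactly the way you describe. One slip in the forward direction: when $u=x^2$, the correct relation is $\ind_\uu(x)=2\,\ind_\uu(u)$ (Remark~\ref{remark}, Item~1), not $2\,\ind_\uu(u)+1$; your final identity $\ind_\uu(u)+\tfrac{1}{|u|}=\tfrac12\bigl(\ind_\uu(x)+\tfrac{1}{|x|}\bigr)$ is nonetheless correct.

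There is, however, a real gap in the reverse inequality for $A_2$. You assert that for an unstable return word $u$ to a bispecial factor of $\uu$ the square $u^2$ is automatically a factor of $\uu$. This can fail: for instance the less frequent return word $s$ to the bispecial factor $b=\varepsilon$ has $\ind_\uu(s)=1$, so $ss\notin\mathcal{L}(\uu)$. The paper closes this gap by first restricting attention to elements $H\in A_2$ with $H>1$ (those with $H\le 1$ lie below $\CR(\vv)$ anyway), and then observing that $H>1$ gives $\ind_\uu(u)>2-\tfrac{1}{|u|}$; since $\ind_\uu(u)\in\tfrac{1}{|u|}\mathbb{Z}$ for a uniformly recurrent aperiodic sequence, this forces $\ind_\uu(u)\ge 2$, whence $u^2\in\mathcal{L}(\uu)$ and the ``vice versa'' part of Lemma~\ref{prevod} applies to the stable factor $u^2$. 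Your diagnosis of the obstacle in the last paragraph is also misdirected: for the reverse inequality you do \emph{not} need the preimage $v$ to be a return word to a bispecial factor of $\vv$; any factor of $\vv$ with the required index suffices, and Lemma~\ref{prevod} produces such a $v$ directly once $u^2\in\mathcal{L}(\uu)$ is secured. Neither Lemma~\ref{lem_Bispecial} nor Lemma~\ref{CoJeToProU} is used in this direction.
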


\begin{proof}
First we show that
\begin{equation}\label{mensi}
\ind_{\vv}(v)\leq \sup  \bigl( A_1\cup A_2\bigr) \quad \text{  for any non-empty} \ v \in \mathcal{L}(\vv)\,.
\end{equation}

If $\ind_{\vv}(v)\leq 2$, then the inequality \eqref{mensi} is trivially satisfied as $A_1$ contains the number $\ind_{\uu}(0) + \tfrac{1}{|0|} \geq 2$  (note that $0$ is a  stable  return word  in $\uu$ to the bispecial factor $\varepsilon$).
Now we assume that $\ind_{\vv}(v) = e> 2$.
By Lemma \ref{Lem_CriticalExponentByReturnWords} and Remark \ref{rem_CriticalExponentByReturnWords}, we may focus only on $v$ which is a  return word to the bispecial factor $b =v^{e-1}$ and $v$ has the prefix $0$.
By Lemma \ref{prevod}, $\ind_{\vv}(v) = \ind_{\uu}(u) + \tfrac{1}{|u|}$, where $u= \S(v0)$.
By Lemma \ref{CoJeToProU}, the factor $u$ is either a stable return word to a bispecial factor in $\uu$, or $u = x^2$, where $x$ is an unstable return word to a bispecial factor in $\uu$.
In the first case we have $\ind_{\vv}(v) \leq \sup A_1$, while in the second case we have $\ind_{\vv}(v) = \ind_{\uu}(u) + \tfrac{1}{|u|} = \tfrac{1}{2}\ind_{\uu}(x) + \tfrac{1}{2|x|} \leq \sup A_2$.
We may conclude that $ \CR(\vv) \leq  \sup  \bigl(A_1\cup A_2\bigr)\,.$

To prove the opposite inequality, we show
$$  A_1\cup A_2 \setminus \left[0,1\right] \subset \{\ind_\vv(v) : v \in \mathcal{L}(\vv), v\neq \varepsilon\}\,.$$
If $H \in A_1$, then there exists a stable factor $u$ in $\uu$ such that
$H= \ind_\uu(u) + \frac{1}{|u|}$, and by Lemma \ref{prevod}, we find  $v$ in $\vv$ such that $H=\ind_\vv(v)$.
Analogously, if  $H \in A_2$ and $H >1$, then $\ind_\uu(u)  = 2H - \frac{1}{|u|} \geq 2$ for some unstable factor $u  \in \mathcal{L}(\uu)$.
Thus the word $y=uu \in \mathcal{L}(\uu)$, it is a stable factor of $\uu$ and its index in $\uu$ is  $\tfrac12\ind_\uu(u)$.
By Lemma \ref{prevod}, there exists $v$ in $\vv$ such that
$\ind_\vv(v)  = \ind_\uu(y) + \tfrac{1}{|y|} =  \tfrac12\ind_\uu(u) + \tfrac{1}{2|u|} = H.$
\end{proof}

Theorem \ref{Lem_PrevodKritExpoRoteNaSturm} will be used in the next sections to determine the critical exponent of a complementary symmetric Rote sequence $\vv$ by exploiting the indices of factors in the Sturmian sequence $\S(\vv)$.
The following example shows an opposite application of Theorem \ref{Lem_PrevodKritExpoRoteNaSturm}.
But before that, let us state a simple auxiliary statement reflecting the behaviour of fractional roots under the application of a morphism.

\begin{obs}\label{pridano}
Let $\phi :\mathcal{A}^* \mapsto \mathcal{A}^* $ be a morphism and let $w \in \mathcal{A}^*$ be a prefix of $\phi(a)$ for each $a\in \mathcal{A}$.
If $u$ is a fractional root of $z$, then $\phi(u)$ is a fractional root of $\phi(z)w$.
\end{obs}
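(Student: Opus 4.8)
The plan is to reduce everything to the canonical decomposition of a prefix of a periodic word. By definition, $u$ being a fractional root of $z$ means that $z$ is a prefix of $u^\omega$; so I would first write $z = u^k u'$, where $k \in \N$ and $u'$ is a (possibly empty) \emph{proper} prefix of $u$. Applying the morphism and using $\phi(xy)=\phi(x)\phi(y)$ gives $\phi(z) = (\phi(u))^k\phi(u')$, hence $\phi(z)w = (\phi(u))^k\,\phi(u')w$. Since $(\phi(u))^k$ is a prefix of $(\phi(u))^\omega$, it will be enough to prove that $\phi(u')w$ is a prefix of $\phi(u)$: then $\phi(z)w$ is a prefix of $(\phi(u))^{k+1}\cdots = (\phi(u))^\omega$, which is precisely the assertion that $\phi(u)$ is a fractional root of $\phi(z)w$.

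For that last step I would use the hypothesis on $w$. As $u'$ is a \emph{proper} prefix of $u$, there is a letter $c \in \mathcal{A}$ with $u'c$ a prefix of $u$, and therefore $\phi(u'c) = \phi(u')\phi(c)$ is a prefix of $\phi(u)$. By assumption $w$ is a prefix of $\phi(c)$, so $\phi(u')w$ is a prefix of $\phi(u')\phi(c)$, hence a prefix of $\phi(u)$, as required. (The degenerate case $\phi(u)=\varepsilon$ makes the statement vacuous, so one may tacitly assume $\phi(u)\neq\varepsilon$; in all our applications $\phi$ is non-erasing.)

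I do not anticipate any real difficulty here: the argument is a short computation once the decomposition $z = u^k u'$ is fixed. The only points that need care are to take $u'$ to be a \emph{proper} prefix of $u$ --- this is exactly what guarantees the existence of the auxiliary letter $c$ --- and to notice that the hypothesis ``$w$ is a prefix of $\phi(a)$ for every $a\in\mathcal{A}$'' is used precisely to let the tail $w$ be absorbed by the copy of $\phi(u)$ that follows $\phi(u')$ inside $(\phi(u))^\omega$.
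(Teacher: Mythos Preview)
Your argument is correct and is exactly the natural justification one would write down: decompose $z=u^ku'$ with $u'$ a proper prefix of $u$, use the letter $c$ following $u'$ in $u$ to absorb the tail $w$ via $w$ being a prefix of $\phi(c)$, and conclude that $\phi(z)w$ is a prefix of $(\phi(u))^{k+1}$. The paper records this statement as an Observation and gives no proof at all, so there is nothing to compare against; your write-up simply fills in the details the paper leaves to the reader.
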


\begin{exam}
Let us consider the Thue--Morse sequence  $\mathbf{t} = 01101001\cdots$, which is fixed by the morphism $\psi:  0\mapsto 01$ and $1\mapsto 10$.
As $\psi$ is primitive, the sequence $\mathbf{t}$ is uniformly recurrent.
It is well-known that its language is closed under the exchange of letters and $\CR(\mathbf{t}) = 2$.
The corresponding sequence $\uu = \S(\mathbf{t}) = 1011101 \cdots$ is called the  period doubling sequence and it is fixed by the morphism $\phi : 0\mapsto 11$ and $1\mapsto 10$, see \cite{rigo}.

We determine the critical exponent of $\uu$.
Theorem \ref{Lem_PrevodKritExpoRoteNaSturm} implies $\CR(\uu)\leq 4$, as otherwise $\CR(\mathbf{t}) >2$, which is a contradiction.
Now we show that the value $4$ is attained.

By Observation \ref{pridano} and the fact that both $\phi(0)$ and $\phi(1)$ have the prefix $1$, the morphism $\phi$ has the following two properties:
\begin{enumerate}
\item If $w \in \mathcal{L}(\uu)$, then $\phi(w)1 \in \mathcal{L}(\uu)$.
\item If $u$ is a fractional root of $w$,  then $\phi(u)$ is a fractional root of $\phi(w)1$.
\end{enumerate}
We will construct two sequences $\bigl(u^{(n)}\bigr)$ and $\bigl(w^{(n)}\bigr)$  of words belonging to $\mathcal{L}(\uu)$.
We start with $u^{(0)}=1$ and $w^{(0)}=111\in \mathcal{L}(\uu)$ and for each $n\in \mathbb{N}$ we define
$$ w^{(n+1)} = \phi(w^{(n)})1 \qquad \text{and} \qquad  u^{(n+1)} = \phi(u^{(n)})\,.$$
Note that $u^{(0)}=1$ is a fractional root of $w^{(0)}=111$.
Because of the property (2), the word $u^{(n)}$ is a fractional root of $w^{(n)}$ for each $n\in \N$.
Moreover, the specific form of the morphism $\phi$ implies $| u^{(n+1)}| = 2| u^{(n)}|$ and $| w^{(n+1)}| =2 | w^{(n)}| +1$.
It gives $| u^{(n)}| = 2^n$ and $ | w^{(n)}|  = 2^{n+2} -1$.
Therefore, $\ind_\uu(u^{(n)}) \geq \frac{2^{n+2} -1}{2^n} \rightarrow  4$.
We may conclude that $\CR(\uu) = 4$.
\end{exam}

\section{Return words to bispecial factors of Sturmian sequences} \label{S_retwordsBS_Sturmian}

The main goal of this article is to describe the critical exponent and the recurrence function of CS Rote sequences. Proposition \ref{Prop_SturmRote} and Theorem \ref{Lem_PrevodKritExpoRoteNaSturm} transform the first task to the computation of the indices of return words to bispecial factors in the associated Sturmian sequences.

This is a preparatory section for this computation.
We introduce the directive sequence of a standard Sturmian sequence and recall some known results on bispecial factors, their return words, and derived sequences.
It allows us to describe the longest factor of $\uu$ with the fractional root $u$, where $u$ is any return word to a bispecial factor of a Sturmian sequence $\uu$ (Lemma \ref{mocniny}).
Furthermore, we explain how to express the lengths of these factors explicitly (Proposition \ref{ParikhRSB}), and eventually in Section \ref{S_CriticalExponentRote}, we determine the indices.

First, we recall that a binary sequence $\uu \in \{0,1\}^\N$ is Sturmian if it has the factor complexity $\mathcal{C}_\uu(n) = n+1$ for all $n \in \N$.
If both sequences $0\uu$ and $1\uu$ are Sturmian, then $\uu$ is called a \textit{standard Sturmian sequence}.
It is well-known that for any Sturmian sequence there exists a unique standard Sturmian sequence with the same language.
Since all properties which we are interested in (indices of factors, critical exponent, special factors, return words, recurrence function) depend only on the language of the sequence, we restrict ourselves to standard Sturmian sequences without loss of generality.

In the sequel, we use the characterization of standard Sturmian sequences by their directive sequences.
To introduce them, we define two morphisms
$$
G = \
\left\{ \, \begin{aligned}
0 &\to 10 \\
1 &\to 1\,
\end{aligned}\right.
 \ \ \ \ \ \ \ \text{and}  \ \ \ \ \ \ \ \ \ \
 D = \
\left\{ \, \begin{aligned}
0 &\to 0 \\
1 &\to 01\,
\end{aligned} \right.
$$
with the corresponding matrices
$$
M_G = \left(\begin{array}{cc}
1 & 0\\
1 & 1\\
\end{array} \right)\quad \quad \ \text{and} \quad \quad \quad
M_D = \left(\begin{array}{cc}
1 & 1\\
0 & 1\\
\end{array} \right)\,.
$$

Let us note that $G = E \circ F$ and $D = F \circ E$, where $E$ is the morphism which exchanges letters, i.e., $E: 0 \to 1$, $1 \to 0$, and $F$ is the Fibonacci morphism, i.e., $F: 0 \to 01$, $1 \to 0$.

\begin{prop}[\cite{JuPi}] \label{Lem_Standard}
For every standard Sturmian sequence $\uu$ there is a uniquely given
 sequence ${\bf \Delta} = \Delta_0\Delta_1\Delta_2 \cdots \in \{G, D\}^\N$ of morphisms and a sequence $(\uu^{(n)})_{n \geq 0}$ of standard Sturmian sequences such that
$$
\uu = {\Delta_0\Delta_1 \ldots \Delta_{n-1}}(\uu^{(n)})\,   \ \text{for every } \ n \in \N\,.
$$
Moreover, the sequence ${\bf \Delta}$ contains infinitely many letters $G$ and infinitely many letters $D$, i.e., $$
{\bf \Delta} =  G^{a_1}D^{a_2}G^{a_3}D^{a_4} \cdots \ \text{ or } \ {\bf \Delta} = D^{a_1}G^{a_2}D^{a_3}G^{a_4} \cdots \quad \text{for some sequence } (a_i)_{i \geq 1} \text{ of positive integers} \,.
$$
The sequence ${\bf \Delta}$ is called the \emph{directive sequence} of $\uu$.
\end{prop}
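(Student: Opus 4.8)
The plan is to prove the statement by repeated desubstitution, at each stage peeling off exactly one copy of $G$ or $D$. First I would note that a standard Sturmian $\uu$ is aperiodic and balanced, so it uses both letters and omits exactly one of $00$, $11$. If $\uu$ began with $1$ and still contained $00$, then $1\uu$ would contain both $11$ (its prefix) and $00$ and hence would not be balanced, contradicting that $1\uu$ is Sturmian; symmetrically $\uu$ cannot begin with $0$ and contain $11$, as $0\uu$ would then fail to be balanced. Thus $u_0=1$ forces $00\notin\mathcal{L}(\uu)$ and $u_0=0$ forces $11\notin\mathcal{L}(\uu)$. Since every sequence in $G(\{0,1\}^{\N})$ begins with $1$ (both $G(0)=10$ and $G(1)=1$ do) and every sequence in $D(\{0,1\}^{\N})$ begins with $0$, and $G$, $D$ are injective on sequences, there is at most one way to write $\uu=\Delta_0(\uu^{(1)})$ with $\Delta_0\in\{G,D\}$: take $\Delta_0=G$ if $u_0=1$ and $\Delta_0=D$ if $u_0=0$. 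This already forces uniqueness of the directive sequence once existence is in place.

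\textbf{The desubstitution step.} Assume $u_0=1$, so $00\notin\mathcal{L}(\uu)$; since $\uu$ is aperiodic it uses both letters infinitely often, so $\uu=1^{c_0}0\,1^{c_1}0\,1^{c_2}0\cdots$ with all $c_i\ge 1$. I would set $\uu^{(1)}=1^{c_0-1}0\,1^{c_1-1}0\,1^{c_2-1}0\cdots$ and check, using $G(1^{a}0)=1^{a+1}0$, that $G(\uu^{(1)})=\uu$. The real work is to verify that $\uu^{(1)}$ is again a \emph{standard} Sturmian sequence, so that the argument iterates: aperiodicity and balance of $\uu^{(1)}$ are immediate from those of $\uu$, while the facts that $\uu^{(1)}$ has complexity $n+1$ and that both $0\uu^{(1)}$ and $1\uu^{(1)}$ are Sturmian I would obtain from the classical theory of Sturmian morphisms --- $G=E\circ F$ and $D=F\circ E$ are Sturmian morphisms mapping standard Sturmian sequences onto standard Sturmian sequences (those beginning with $1$, resp.\ $0$) --- or, more self-contained, from the characterization ``standard Sturmian $=$ Sturmian with every prefix left special'' together with the fact that $G$ and $D$ preserve that property; see \cite{JuPi,BaPeSt}. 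The case $u_0=0$ is symmetric, decomposing $\uu$ around its isolated $0$'s and using $D(0^{b}1)=0^{b+1}1$.

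\textbf{Iteration and the block form.} Applying the previous two steps to $\uu^{(0)}=\uu, \uu^{(1)}, \uu^{(2)},\dots$ in turn yields $\Delta_0,\Delta_1,\dots\in\{G,D\}$ and standard Sturmian $\uu^{(n)}$ with $\uu^{(n)}=\Delta_n(\uu^{(n+1)})$, hence $\uu=\Delta_0\Delta_1\cdots\Delta_{n-1}(\uu^{(n)})$ for every $n$, and ${\bf \Delta}=\Delta_0\Delta_1\Delta_2\cdots$ is unique by the first paragraph. Finally I would rule out ${\bf \Delta}$ being eventually constant: if $\Delta_i=G$ for all $i\ge N$, then $\uu^{(N)}=G^{k}(\uu^{(N+k)})$ for every $k$, and since $G^{k}(0)=1^{k}0$, $G^{k}(1)=1$, and $\uu^{(N+k)}$ contains the letter $0$, the length-$k$ prefix of $\uu^{(N)}$ equals $1^{k}$ for all $k$, i.e.\ $\uu^{(N)}=1^{\omega}$, contradicting aperiodicity; the same argument with $D^{k}(1)=0^{k}1$, $D^{k}(0)=0$ excludes $\Delta_i=D$ for all large $i$. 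Hence both $G$ and $D$ occur infinitely often in ${\bf \Delta}$, so grouping equal consecutive letters gives ${\bf \Delta}=G^{a_1}D^{a_2}G^{a_3}\cdots$ or $D^{a_1}G^{a_2}D^{a_3}\cdots$ with all $a_i$ positive integers.

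\textbf{Expected main obstacle.} The nontrivial content is concentrated in the desubstitution step, in two places: first, that every standard Sturmian sequence genuinely admits the desubstitution --- this is exactly where the hypothesis that \emph{both} $0\uu$ and $1\uu$ are Sturmian is used, through the ``missing square is determined by the first letter'' observation; and second, that the desubstituted sequence is standard rather than merely Sturmian, which rests on the structural theory of Sturmian morphisms imported from \cite{JuPi} (one could alternatively route the whole argument through the recursive definition of standard words $s_{n+1}=s_n^{a_{n+1}}s_{n-1}$ and continued fractions, at the cost of setting up that machinery).
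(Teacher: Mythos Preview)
The paper does not prove this proposition: it is quoted verbatim from Justin and Pirillo \cite{JuPi} (note the citation in the proposition header) and is used as a black box throughout. So there is no ``paper's own proof'' to compare against.

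Your sketch is the standard S-adic desubstitution argument and is essentially correct. The only place I would push back is a small logical wrinkle in the desubstitution step: you define $\uu^{(1)}$ explicitly and then want to conclude it is \emph{standard} Sturmian by invoking that $G$ maps standard Sturmian sequences to standard Sturmian sequences; but that implication goes the wrong way. What you actually need is the converse---that if $G(\uu')$ is standard Sturmian then $\uu'$ is standard Sturmian---and you should state it that way (it does hold, and is part of the classical theory you cite). Alternatively, as you suggest, the cleanest self-contained route is via ``standard $=$ Sturmian with every prefix left special'': from $\uu$ standard one checks directly that every prefix of $\uu^{(1)}$ is left special (because $G$ sends a left extension of a prefix of $\uu^{(1)}$ to a left extension of the corresponding prefix of $\uu$), which together with balance and aperiodicity gives standardness. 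With that adjustment your argument is complete.
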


\begin{rem} \label{Rem_Zamena}
Let us note that  $\uu$ has the directive sequence $G^{a_1}D^{a_2}G^{a_3}D^{a_4} \cdots$ if and only if $E(\uu)$ has the directive sequence $D^{a_1}G^{a_2}D^{a_3}G^{a_4} \cdots$. Obviously, both sequences $\uu$ and $E(\uu)$ have the same structure up to the exchange of letters $0 \leftrightarrow 1$.
In particular, any Sturmian sequence with the directive sequence $G^{a_1}D^{a_2}G^{a_3}D^{a_4} \cdots$ can be written as a concatenation of the blocks $1^{a_1}0$ and $1^{a_1 + 1}0$, while any Sturmian sequence with the directive sequence $D^{a_1}G^{a_2}D^{a_3}G^{a_4} \cdots$ can be written as a concatenation of the blocks $0^{a_1}1$ and $0^{a_1 + 1}1$.

By Vuillon's result \cite{Vui01}, every factor of any Sturmian sequence has exactly two return words.
Thus for a given bispecial factor $b$ of  $\uu$, we usually denote the more and the less frequent return word to $b$ in  $\uu$ by $r$ and $s$.
In this notation, the sequence $\uu$ can be decomposed into the blocks $r^ks$ and $r^{k+1}s$ for some $k \in \N, k \geq 1$.
\end{rem}

We need to know how bispecial factors and their return words change under the application of morphisms $G$ and $D$.
The following description can be found in \cite{MePeVu}, where several partial statements from \cite{KlMePeSt18} are accumulated.

\begin{lem} \label{Lem_ImageG}
Let $\uu', \uu$ be standard Sturmian sequences such that $\uu = G(\uu')$.
\begin{itemize}
\item[\textit{(i)}] For every bispecial factor $b'$ of $\uu'$, the factor $b = G(b')1$ is a bispecial factor of $\uu$.
\item[\textit{(ii)}] Every bispecial factor $b$ of $\uu$ which is not empty can be written as $b = G(b')1$ for a uniquely given bispecial factor $b' \in \mathcal{L}(\uu')$.
\item[\textit{(iii)}] The words $r', s'$ are return words to a bispecial prefix $b'$ of $\uu'$ if and only if $r = G(r'), s =G(s')$ are return words to a bispecial prefix $b = G(b')1$ of $\uu$. Moreover, the derived sequences satisfy $\dd_\uu(b) = \dd_{\uu'}(b')$.
\end{itemize}
\end{lem}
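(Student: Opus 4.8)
The plan is to exploit the very concrete shape of $G$ ($G(0)=10$, $G(1)=1$) together with the fact that, since $\uu=G(\uu')$ is a concatenation of the blocks $10$ and $1$, the word $00$ does not occur in $\uu$. Equivalently, $\{10,1\}$ is a code, so $G$ is injective, and every occurrence in $\uu$ of a factor beginning with the letter $1$ starts at a $G$-block boundary and admits a unique parsing into the blocks $G(0),G(1)$ -- with the single exception of a trailing letter $1$, which may be read either as a whole block $G(1)$ or as the first letter of a forthcoming $G(0)$. This lone ambiguity is precisely what produces the extra letter $1$ in $b=G(b')1$, and the proof is organized around it. (Alternatively one may invoke the recognizability of $G$, or use $G=E\circ F$ and transfer the classical statements for the Fibonacci morphism $F$ through the letter exchange $E$; the bookkeeping is essentially the same.)

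For \textit{(i)} I would argue by a direct computation. If $b'$ is right special in $\uu'$, then $b'0,b'1\in\mathcal{L}(\uu')$, hence $G(b')10=G(b'0)\in\mathcal{L}(\uu)$; and since $b'1$ is followed in $\uu'$ by a letter whose $G$-image begins with $1$, also $G(b')11\in\mathcal{L}(\uu)$. Thus $b=G(b')1$ is followed in $\uu$ by both $0$ and $1$. Symmetrically, if $b'$ is left special, then $0b',1b'\in\mathcal{L}(\uu')$; prepending $G(0)=10$ and $G(1)=1$ and using once more that the image of the letter after $b'$ begins with $1$ yields $10\,G(b')1\in\mathcal{L}(\uu)$ and $1\,G(b')1\in\mathcal{L}(\uu)$, so $b$ is preceded in $\uu$ by both $0$ and $1$. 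Hence $b$ is bispecial. The degenerate case $b'=\varepsilon$, $b=1$, is checked separately using that $\uu$ contains $01$, $10$ and $11$.

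For \textit{(ii)} let $b$ be a nonempty bispecial factor of $\uu$. Since $00\notin\mathcal{L}(\uu)$ while $0b$ and $b0$ are factors, $b$ both begins and ends with $1$. By the discussion above every occurrence of $b$ starts at a block boundary, and parsing $b$ from there into the blocks $G(0),G(1)$ is forced except at its last letter; because $b$ ends with $1$ rather than $0$, this last letter is left over as the potential first letter of a $G(0)$, so $b=G(b')1$ for a uniquely determined $b'\in\mathcal{L}(\uu')$ (uniqueness by injectivity of $G$). To see $b'$ is bispecial, read off the block structure at one-letter extensions of $b$: if $b$ is followed in $\uu$ by $0$ (resp. by $1$), then the block starting at the last letter of $b$ is $G(0)$ (resp. $G(1)$), which forces $b'0\in\mathcal{L}(\uu')$ (resp. $b'1\in\mathcal{L}(\uu')$); and from $10b\in\mathcal{L}(\uu)$ and $1b\in\mathcal{L}(\uu)$, parsed from a block boundary, one gets $0b'\in\mathcal{L}(\uu')$ and $1b'\in\mathcal{L}(\uu')$. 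As $b$ is bispecial, all four words $b'0,b'1,0b',1b'$ lie in $\mathcal{L}(\uu')$, so $b'$ is bispecial in $\uu'$. (The case $b=1$ corresponds to $b'=\varepsilon$.)

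For \textit{(iii)} let $b'$ be a bispecial prefix of $\uu'$ with $\mathcal{R}_{\uu'}(b')=\{r',s'\}$, and let $g(i)$ be the length of the $G$-image of the prefix of $\uu'$ of length $i$. I claim $b=G(b')1$ occurs in $\uu$ exactly at the positions $g(i)$ at which $b'$ occurs at position $i$ in $\uu'$. The inclusion $\supseteq$ is immediate, since $G(b')$ is followed in $\uu$ by the first letter $1$ of the image of the letter following $b'$ in $\uu'$. For $\subseteq$, every occurrence of $b$ starts at some block boundary $g(i)$ as in \textit{(ii)}, and recognizability of $G$ forces $b'$ to be a prefix of $\uu'_i\uu'_{i+1}\cdots$ -- otherwise the $G$-image of the first letter at which $\uu'_i\uu'_{i+1}\cdots$ departs from $b'$ clashes with the corresponding letters of $b=G(b')1$. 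Since $g$ is strictly increasing, consecutive occurrences of $b'$ in $\uu'$ correspond to consecutive occurrences of $b$ in $\uu$, so a return word $w'=\uu'_i\cdots\uu'_{j-1}$ to $b'$ is carried to the return word $G(w')=\uu_{g(i)}\cdots\uu_{g(j)-1}$ to $b$, and conversely; hence $\mathcal{R}_\uu(b)=\{G(r'),G(s')\}$, which indeed has two elements as $G$ is injective. Finally, decomposing $\uu'$ into its return words to $b'$ and applying $G$ termwise decomposes $\uu$ into the matching return words to $b$ in the same order, so $\dd_\uu(b)=\dd_{\uu'}(b')$. The genuinely delicate point throughout is the desubstitution in \textit{(ii)}: one must recognize that the only ambiguity is a trailing $1$ and that it must be kept outside $b'$, so that $b'$ is the completed-blocks desubstitution of $b$ and not a desubstitution of $b$ itself.
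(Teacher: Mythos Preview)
The paper does not prove this lemma; it is quoted from the reference \cite{MePeVu}, which in turn gathers partial statements from \cite{KlMePeSt18}. So there is no in-paper proof to compare against.

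Your direct argument is correct. The organizing observation---that in $\uu=G(\uu')$ every letter $1$ lies at a $G$-block boundary, so any factor beginning with $1$ admits a unique parsing into the blocks $G(0)=10$ and $G(1)=1$ except for a possible trailing $1$---does all the work: it makes \textit{(i)} a short computation, supplies the desubstitution $b=G(b')1$ in \textit{(ii)} (the trailing $1$ is forced outside $b'$ precisely because $b$ ends in $1$ rather than $0$), and gives the bijection between occurrences of $b'$ in $\uu'$ and occurrences of $b$ in $\uu$ that carries \textit{(iii)}. The one spot where a sentence more of care would help is the ``clash'' step in \textit{(iii)}: when $b'$ and $u'_iu'_{i+1}\cdots$ first differ at position $k$, you should note that the resulting disagreement between $G(b')1$ and $G(u'_iu'_{i+1}\cdots)$ actually falls within the first $|b|$ letters---it does, and the appended $1$ in $b=G(b')1$ is exactly what handles the boundary case $k=|b'|-1$. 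The alternative route via $G=E\circ F$ that you mention would also work, but the direct approach you chose is cleaner for this statement.
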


\begin{lem} \label{Lem_ImageD}
Let $\uu', \uu$ be standard Sturmian sequences such that $\uu = D(\uu')$.
\begin{itemize}
\item[\textit{(i)}] For every bispecial factor $b'$ of $\uu'$, the factor $b = D(b')0$ is a bispecial factor of $\uu$.
\item[\textit{(ii)}] Every bispecial factor $b$ of $\uu$ which is not empty can be written as $b = D(b')0$ for a uniquely given bispecial factor $b' \in \mathcal{L}(\uu')$.
\item[\textit{(iii)}] The words $r', s'$ are return words to a bispecial prefix $b'$ of $\uu'$ if and only if $r =D(r'), s = D(s')$ are return words to a bispecial prefix $b = D(b')0$ of $\uu$. Moreover, the derived sequences satisfy $\dd_\uu(b) = \dd_{\uu'}(b')$.
\end{itemize}
\end{lem}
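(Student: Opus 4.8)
The final statement to prove is Lemma~\ref{Lem_ImageD}, concerning how bispecial factors and their return words behave under the morphism $D$. My plan is to obtain this as a direct mirror of Lemma~\ref{Lem_ImageG} via the letter-exchange morphism $E$, exploiting the identity $D = E \circ G \circ E$ (equivalently, that the directive-sequence descriptions of $\uu$ and $E(\uu)$ swap $G \leftrightarrow D$ as recorded in Remark~\ref{Rem_Zamena}).

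\medskip

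\textbf{Step 1 (reduction to the $G$ case).} Suppose $\uu = D(\uu')$ with $\uu', \uu$ standard Sturmian. Apply $E$ to both sides: since $E$ is an involution and $E \circ D \circ E = G$, we get $E(\uu) = G(E(\uu'))$. Both $E(\uu)$ and $E(\uu')$ are again standard Sturmian sequences (the exchange of letters preserves the factor complexity $n+1$ and the standardness property, since $0\uu,1\uu$ Sturmian $\iff$ $1E(\uu),0E(\uu)$ Sturmian). So Lemma~\ref{Lem_ImageG} applies to the pair $E(\uu') , E(\uu)$.

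\medskip

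\textbf{Step 2 (transport of bispecial factors).} A word $w$ is bispecial in a sequence $\ww$ if and only if $E(w)$ is bispecial in $E(\ww)$ — this is immediate from the definitions, because $wa, wb \in \mathcal L(\ww)$ for distinct $a,b$ iff $E(w)E(a), E(w)E(b) \in \mathcal L(E(\ww))$, and similarly on the left. Now take a bispecial factor $b'$ of $\uu'$. Then $E(b')$ is bispecial in $E(\uu')$, so by Lemma~\ref{Lem_ImageG}(i), $G(E(b'))1$ is bispecial in $E(\uu) = G(E(\uu'))$. Applying $E$ and using $E(G(E(b'))1) = E(G(E(b')))E(1) = (E\circ G \circ E)(b')0 = D(b')0$, we conclude $D(b')0$ is bispecial in $\uu$, giving (i). For (ii), run the same argument backwards: a nonempty bispecial $b$ of $\uu$ yields a nonempty bispecial $E(b)$ of $E(\uu)$, which by Lemma~\ref{Lem_ImageG}(ii) is uniquely of the form $G(b'')1$ with $b''$ bispecial in $E(\uu')$; set $b' = E(b'')$, a bispecial factor of $\uu'$, and then $b = E(G(b'')1) = D(E(b''))0 = D(b')0$. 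Uniqueness of $b'$ follows from uniqueness of $b''$ together with the bijectivity of $E$.

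\medskip

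\textbf{Step 3 (transport of return words and derived sequences).} The key routine fact is that $E$ is a bijective morphism, hence it maps the decomposition of $\ww$ into return words of a prefix $w$ onto the decomposition of $E(\ww)$ into return words of $E(w)$, in an order-preserving way; so $\mathcal R_{E(\ww)}(E(w)) = E(\mathcal R_\ww(w))$, the correspondence respecting which return word is more frequent, and $\dd_{E(\ww)}(E(w)) = \dd_\ww(w)$. Granting this, suppose $r', s'$ are the return words to a bispecial prefix $b'$ of $\uu'$. Then $E(r'), E(s')$ are the return words to $E(b')$ in $E(\uu')$, so by Lemma~\ref{Lem_ImageG}(iii), $G(E(r')), G(E(s'))$ are the return words to the bispecial prefix $G(E(b'))1$ in $E(\uu)$, with $\dd_{E(\uu)}(G(E(b'))1) = \dd_{E(\uu')}(E(b')) = \dd_{\uu'}(b')$. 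Applying $E$ once more: the return words to $E(G(E(b'))1) = D(b')0$ in $\uu$ are $E(G(E(r'))) = D(r')$ and $E(G(E(s'))) = D(s')$, and $\dd_\uu(D(b')0) = \dd_{E(\uu)}(G(E(b'))1) = \dd_{\uu'}(b')$. The converse direction is obtained by reading the same chain in reverse. Here one should also note the small compatibility point that $D(b')0$ being a \emph{prefix} of $\uu$ corresponds to $G(E(b'))1$ being a prefix of $E(\uu)$ (since $\uu$ and $E(\uu)$ have matching prefix structure under $E$), so the hypothesis "bispecial prefix" is preserved throughout.

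\medskip

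\textbf{Main obstacle.} There is no deep difficulty here; the statement is genuinely the $E$-conjugate of Lemma~\ref{Lem_ImageG}. The one place demanding care is verifying the auxiliary claim in Step~3 that $E$ carries return-word decompositions and derived sequences across unchanged — in particular that the labelling "more frequent return word $r$ vs.\ less frequent $s$" is preserved (it is, because $E$ is a length-preserving bijection on factors and preserves frequencies). I would therefore state this transport property as a short standalone observation before assembling the three parts, and then the proof of Lemma~\ref{Lem_ImageD} is a purely mechanical application of $E$ to Lemma~\ref{Lem_ImageG}.
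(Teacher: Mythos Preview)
Your reduction of Lemma~\ref{Lem_ImageD} to Lemma~\ref{Lem_ImageG} via conjugation by the letter-exchange morphism $E$ is correct: the identity $D = E \circ G \circ E$ holds, and you have checked that $E$ transports bispeciality, prefixhood, return-word decompositions, and derived sequences in the required way. The paper itself does not supply a proof of either Lemma~\ref{Lem_ImageG} or Lemma~\ref{Lem_ImageD}, quoting them instead from~\cite{MePeVu} (which in turn assembles results from~\cite{KlMePeSt18}); so there is no in-paper argument to compare against, but your symmetry-based derivation is exactly the natural route and is in the spirit of Remark~\ref{Rem_Zamena}.
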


Any prefix of a standard Sturmian sequence is a left special factor.
Moreover, a factor of a standard Sturmian sequence $\uu$ is bispecial if and only if it is a palindromic prefix of $\uu$.
Therefore, we can order the bispecial factors of a given standard Sturmian sequence $\uu$ by their lengths: we start with the empty word $\varepsilon$, which is the $0^{th}$ bispecial factor, then the first letter of $\uu$ is the $1^{st}$ bispecial factor of $\uu$ etc.

\begin{rem}\label{Rem_DerivedSeq}
If $\uu$ has the directive sequence $\Delta_0\Delta_1\Delta_2 \cdots \in \{G, D\}^\N$, the derived sequence $\dd_{\uu}(b)$ to the $n^{th}$ bispecial factor $b$ of $\uu$ has the directive sequence $\Delta_{n}\Delta_{n+1}\Delta_{n+2} \cdots$.
Indeed, we denote the sequence with the directive sequence $\Delta_{n}\Delta_{n+1} \Delta_{n+2}\cdots$ by $\uu'$.
It has the bispecial factor $\varepsilon$ and by the definition $\dd_{\uu'}(\varepsilon) = \uu'$.
If we apply $n$ times Lemmas \ref{Lem_ImageG} or \ref{Lem_ImageD}, we get $\dd_{\uu}(b) = \dd_{\uu'}(\varepsilon) = \uu'$.
\end{rem}

Let us formulate a direct consequence of the relation \eqref{multi} and  Lemmas \ref{Lem_ImageG} and \ref{Lem_ImageD}.

\begin{coro}\label{obrazyBS}
Let $k,h\in \N$.  Let $b'$ be the $k^{th}$ bispecial factor of a standard Sturmian sequence $\uu'$  and $u'$  be a return word to $b'$ in $\uu'$.  Let ${\bf \Delta} = \Delta_0\Delta_1 \Delta_2 \cdots \in\{G, D\}^\N$ be the directive sequence of $
\uu'$.
\begin{enumerate}
 \item If $\uu  = G^h(\uu')$, then the $(k+h)^{th}$ bispecial factor $b$ of $\uu$  and a  return word $u$  to $b$ satisfy
$$
\vec{V}(b) = \left(\!\!\begin{array}{cc}1&0\\ h&1 \end{array}\!\!\right)\vec{V}(b') + h \left(\!\!\begin{array}{c}0\\ 1 \end{array}\!\!\right)  \quad \text{and}\quad  \vec{V}(u) = \left(\!\!\begin{array}{cc}1&0\\ h&1 \end{array}\!\!\right)\vec{V}(u')\,.
$$
The directive sequence of $\uu$ is $G^h\Delta_0\Delta_1 \Delta_2 \cdots$.
\medskip

\item If $\uu  =D^h(\uu')$, then the $(k+h)^{th}$ bispecial factor $b$ of $\uu$ and a  return word $u$  to $b$ satisfy
$$
\vec{V}(b) = \left(\!\!\begin{array}{cc}1&h\\ 0&1 \end{array}\!\!\right)\vec{V}(b') + h \left(\!\!\begin{array}{c}1\\ 0 \end{array}\!\!\right) \quad \text{and}\quad  \vec{V}(u) = \left(\!\!\begin{array}{cc}1&h\\ 0&1 \end{array}\!\!\right)\vec{V}(u') \,.
$$
The directive sequence of $\uu$ is $D^h\Delta_0\Delta_1 \Delta_2 \cdots$.
\end{enumerate}
\end{coro}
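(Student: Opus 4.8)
I would prove statement~(1) by induction on $h$; statement~(2) follows by the symmetric argument in which $G$, $M_G$ and the trailing letter $1$ are replaced by $D$, $M_D$ and the trailing letter $0$.

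For the base case $h=1$ the whole content is just Lemma~\ref{Lem_ImageG} together with the relation~\eqref{multi}. Items~(i) and~(ii) of Lemma~\ref{Lem_ImageG} say that $b'\mapsto G(b')1$ is a bijection from the set of bispecial factors of $\uu'$ onto the set of non-empty bispecial factors of $\uu=G(\uu')$; since $G$ is non-erasing this bijection respects the ordering by length, and since the empty word is the $0^{th}$ bispecial factor of both sequences, the $k^{th}$ bispecial factor $b'$ of $\uu'$ is mapped to the $(k+1)^{th}$ bispecial factor $b=G(b')1$ of $\uu$. Applying~\eqref{multi} to $b=G(b')1$ gives $\vec{V}(b)=M_G\vec{V}(b')+\vec{V}(1)=\bigl(\begin{smallmatrix}1&0\\1&1\end{smallmatrix}\bigr)\vec{V}(b')+\bigl(\begin{smallmatrix}0\\1\end{smallmatrix}\bigr)$. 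By Lemma~\ref{Lem_ImageG}(iii) the two return words to $b$ in $\uu$ are exactly the $G$-images of the two return words to $b'$ in $\uu'$, so for our chosen $u'$ the word $u:=G(u')$ is a return word to $b$ and~\eqref{multi} gives $\vec{V}(u)=M_G\vec{V}(u')$. Finally, prepending $G$ to the directive sequence of $\uu'$ (which is admissible by Proposition~\ref{Lem_Standard}, since $\Delta$ already contains infinitely many $G$'s and $D$'s) shows that the directive sequence of $\uu$ is $G\Delta_0\Delta_1\Delta_2\cdots$.

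For the inductive step I would factor $\uu=G^{h+1}(\uu')=G(\uu'')$ with $\uu'':=G^{h}(\uu')$, apply the induction hypothesis to the pair $(\uu'',\uu')$ to obtain the $(k+h)^{th}$ bispecial factor $b''$ of $\uu''$ and a return word $u''$ to it with $\vec{V}(b'')=\bigl(\begin{smallmatrix}1&0\\h&1\end{smallmatrix}\bigr)\vec{V}(b')+h\bigl(\begin{smallmatrix}0\\1\end{smallmatrix}\bigr)$ and $\vec{V}(u'')=\bigl(\begin{smallmatrix}1&0\\h&1\end{smallmatrix}\bigr)\vec{V}(u')$, and then apply the base case to the pair $(\uu,\uu'')$. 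The identities $M_G\bigl(\begin{smallmatrix}1&0\\h&1\end{smallmatrix}\bigr)=\bigl(\begin{smallmatrix}1&0\\h+1&1\end{smallmatrix}\bigr)$ and $M_G\bigl(\begin{smallmatrix}0\\1\end{smallmatrix}\bigr)=\bigl(\begin{smallmatrix}0\\1\end{smallmatrix}\bigr)$ then turn the composed formulae into exactly those of the statement with $h$ replaced by $h+1$; the bispecial index advances from $k+h$ to $k+h+1$ and the directive sequence acquires one more leading $G$. For case~(2) the analogous identities $M_D\bigl(\begin{smallmatrix}1&h\\0&1\end{smallmatrix}\bigr)=\bigl(\begin{smallmatrix}1&h+1\\0&1\end{smallmatrix}\bigr)$ and $M_D\bigl(\begin{smallmatrix}1\\0\end{smallmatrix}\bigr)=\bigl(\begin{smallmatrix}1\\0\end{smallmatrix}\bigr)$ do the same job.

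The argument is essentially bookkeeping, so I do not anticipate a genuine obstacle. The two points that need a little care are: the off-by-one in the numbering of bispecial factors (the empty word belongs to every level and is the unique bispecial factor of $\uu$ not of the form $G(b')1$, resp.\ $D(b')0$), and the precise meaning of ``a return word $u$ to $b$'' — it is Lemma~\ref{Lem_ImageG}(iii) (resp.\ Lemma~\ref{Lem_ImageD}(iii)) that guarantees that both return words to $b$ are $G^{h}$-images (resp.\ $D^{h}$-images) of return words to $b'$, so that the correspondence $u'\mapsto u$ is well defined and~\eqref{multi} applies regardless of which of the two return words we started from.
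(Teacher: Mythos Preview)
Your proof is correct and is precisely the argument the paper has in mind: the corollary is stated there without proof, merely as ``a direct consequence of the relation~\eqref{multi} and Lemmas~\ref{Lem_ImageG} and~\ref{Lem_ImageD}'', and your induction on $h$ is exactly how one unpacks that remark. The only cosmetic point is that the order-preservation of $b'\mapsto G(b')1$ relies not just on $G$ being non-erasing but on the bispecial factors of a standard Sturmian sequence being nested prefixes; you clearly have this in mind, and it is not a gap.
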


As we have seen in Remark \ref{rem_CriticalExponentByReturnWords}, when determining the critical exponent it suffices to take into account only bispecial factors whose fractional roots are equal to its return words.
Lemma \ref{mocniny} says that all bispecial factors of a Sturmian sequence are of this type, and moreover, it enables one to determine the indices of their return words.
The first auxiliary statement is a slightly strengthened variant of Observation \ref{pridano} for the morphisms $G$ and $D$.

\begin{obs} \label{Lem_FractionalRoot}
Let $\uu$ be a binary sequence and let $u \in \mathcal{L}(\uu)$.
If $z$ is the longest factor in $\mathcal{L}(\uu)$ with the fractional root $u$, then $G(z)1$ is the longest factor in $\mathcal{L}(G(\uu))$ with the fractional root $G(u)$ and, analogously, $D(z)0$ is the longest factor in $\mathcal{L}(D(\uu))$ with the fractional root $D(u)$.
\end{obs}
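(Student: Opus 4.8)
The statement to prove is Observation~\ref{Lem_FractionalRoot}: if $z$ is the longest factor in $\mathcal{L}(\uu)$ with fractional root $u$, then $G(z)1$ is the longest factor in $\mathcal{L}(G(\uu))$ with fractional root $G(u)$, and similarly $D(z)0$ is the longest factor in $\mathcal{L}(D(\uu))$ with fractional root $D(u)$. I will treat the case of $G$ in detail; the case of $D$ is symmetric (indeed $D = E\circ G\circ E$ up to relabelling, so it suffices to mirror every step).

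\medskip

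\noindent\emph{Step 1: $G(z)1$ actually is a factor of $G(\uu)$ with the claimed fractional root.} Since both $G(0)=10$ and $G(1)=1$ end with $1$, the letter $1$ is a common suffix, but more to the point both images end with $1$, so if $za$ is a factor of $\uu$ for some letter $a$, then $G(za)=G(z)G(a)$ begins with $G(z)1$ (as $G(a)$ starts with $1$). Hence $G(z)1 \in \mathcal{L}(G(\uu))$. For the fractional root: this is exactly Observation~\ref{pridano} applied to the morphism $\phi = G$ with the word $w = 1$, which is a prefix of both $G(0)=10$ and $G(1)=1$. Thus $G(u)$ is a fractional root of $G(z)1$. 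So $G(z)1$ is \emph{a} factor with fractional root $G(u)$; the content of the observation is the maximality.

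\medskip

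\noindent\emph{Step 2: maximality.} Suppose for contradiction that some factor $\tilde z \in \mathcal{L}(G(\uu))$ with fractional root $G(u)$ is strictly longer than $G(z)1$. Since $\tilde z$ has fractional root $G(u)$, it is a prefix of $(G(u))^\omega = G(u^\omega)$, and $|\tilde z| > |G(z)1| = |G(z)|+1$. The strategy is to ``desubstitute'' $\tilde z$, i.e.\ to recover from $\tilde z$ a long factor of $\uu$ with fractional root $u$, contradicting the maximality of $z$. The key structural fact is that $G$ is a nonerasing morphism of Sturmian type whose images $G(0)=10$, $G(1)=1$ form a suffix code (each image ends with $1$ and only $G(0)$ has a letter before that $1$), so there is an essentially unique way to parse a factor of $G(\uu)$ into blocks $G(0),G(1)$, up to a bounded boundary correction on the left and right. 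Concretely: $\tilde z$ occurs in $G(\uu)$, so there is a factor $w$ of $\uu$ and letters such that $\tilde z$ is obtained from $G(w)$ by deleting a (possibly empty) proper prefix of the first image block and a (possibly empty) proper suffix of the last image block. Because every image block ends in $1$, and $\tilde z$ — being a prefix of $G(u^\omega)$ — ends exactly where an image block ends (the letter preceding any block boundary in $G(u^\omega)$ is $1$, and $G(u^\omega)$ read from position $0$ starts at a block boundary), one checks that in fact $\tilde z = G(w')1$ for some factor $w'$ of $\uu$, OR $\tilde z = G(w')$; in either case $|w'| \ge |z|+1$. Since $G$ is injective on words and $G(u)$ is a fractional root of $\tilde z$, pulling back through $G$ (using that $G$ is injective and that a prefix of $G(u^\omega)$ ending at a block boundary pulls back to a prefix of $u^\omega$) shows $w'$ has fractional root $u$ and $|w'| > |z|$, contradicting maximality of $z$.

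\medskip

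\noindent\emph{The main obstacle.} The delicate point is Step~2: making precise the ``desubstitution'' and in particular controlling the left and right boundary effects of the morphism. One must argue carefully that (a) the left end of $\tilde z$, being a prefix of $G(u^\omega)=G(u)G(u)\cdots$, sits at a block boundary of the $G$-decomposition, so no prefix correction is needed; and (b) the extra trailing $1$ in $G(z)1$ is exactly accounted for by the fact that the last block of any $G$-image is a lone $1$ (from $G(1)$) or ends in $1$ (from $G(0)=10$), so the ``$+1$'' is forced and not improvable. A clean way to organize this is to observe that $\ell \mapsto |G(u^{\ell})1|$ and $\ell \mapsto |G(u^\ell)|$ differ by a constant, that the map $z \mapsto G(z)1$ is strictly length-increasing and ``block-aligned'', and that its inverse on block-aligned words of $\mathcal{L}(G(\uu))$ lands in $\mathcal{L}(\uu)$ — essentially the content already packaged in Lemmas~\ref{Lem_ImageG} and \ref{Lem_ImageD} for bispecial factors, here needed in the mild generality of arbitrary factors. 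The case of $D$ follows by the symmetry $0 \leftrightarrow 1$ (Remark~\ref{Rem_Zamena}), replacing the trailing $1$ by a trailing $0$ since $D(0)=0$ and $D(1)=01$ both end in the letter that is a common prefix after exchange; concretely apply the $G$-result to $E(\uu)$ and conjugate by $E$.
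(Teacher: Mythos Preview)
The paper states this as an observation without proof, so there is nothing to compare your argument against directly. Your Step~1 is correct (modulo the slip ``end with~$1$'' --- you mean ``start with~$1$''), but this slip recurs more seriously in Step~2, where the desubstitution rests on the false claims that ``every image block ends in~$1$'' and ``the letter preceding any block boundary in $G(u^\omega)$ is~$1$''. In fact $G(0)=10$ ends in~$0$, so these statements are wrong and the reasoning built on them does not stand as written. The correct structural fact is the dual one: both $G(0)$ and $G(1)$ \emph{begin} with~$1$, so in $G(\uu)$ every occurrence of the letter~$1$ lies at the \emph{start} of an image block, and every~$0$ at the end of a $G(0)$-block. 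This is what makes the block parsing of a factor of $G(\uu)$ essentially unique.

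With this correction, your strategy goes through. If some $\tilde z \in \mathcal{L}(G(\uu))$ with fractional root $G(u)$ were strictly longer than $G(z)1$, set $c = (u^\omega)_{|z|}$ and let $d$ be the letter of $G(u^\omega)$ at position $|G(z)|+1$; then $G(z)1d \in \mathcal{L}(G(\uu))$. If $c=0$ then $d=0$ and $G(z)1d = G(zc)$ begins with~$1$ and ends with~$0$, hence at its occurrence in $G(\uu)$ it occupies a run of complete image blocks; injectivity of $G$ (the set $\{10,1\}$ is a code) yields $zc \in \mathcal{L}(\uu)$. If $c=1$ then $d=1$; at the occurrence of $G(z)11$ in $G(\uu)$ both trailing $1$'s are block-initial, so the first of them is a full $G(1)$-block and $G(zc)=G(z1)$ again occupies complete blocks, giving $zc \in \mathcal{L}(\uu)$. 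Either way $zc$ has fractional root~$u$ and length $|z|+1$, contradicting the maximality of~$z$. The $D$-case then follows by the conjugacy $D = E\circ G\circ E$, as you indicate.
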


\begin{lem}\label{mocniny}
Let $b$ be a bispecial factor of a standard Sturmian sequence $\uu$.
Let $r$ and $s$ be the return words to $b$ in $\uu$ and let $k \in \N$, $k \geq 1$, be such that $\uu$ is concatenated from the blocks $r^ks$ and $r^{k+1}s$.
Then $r^{k+1}b$ is the longest factor of $\uu$ with the fractional root $r$ and $sb$ is the longest factor of $\uu$ with the fractional root~$s$.
\end{lem}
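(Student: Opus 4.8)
The plan is to argue by induction on the index $n$ of the bispecial factor $b$ in $\uu$, recalling that the bispecial factors of a standard Sturmian sequence are exactly its palindromic prefixes and hence are linearly ordered by length. At each step I would strip off the first letter $\Delta_0 \in \{G, D\}$ of the directive sequence, as in Proposition~\ref{Lem_Standard}, and transport the claim for the shorter bispecial factor through Observation~\ref{Lem_FractionalRoot}. Concretely, I would prove the following uniform statement: for every standard Sturmian sequence $\uu$ and every $n \in \N$, the $n$th bispecial factor $b$ of $\uu$ satisfies the conclusion of the lemma.

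In the base case $n = 0$ we have $b = \varepsilon$. By Remark~\ref{Rem_Zamena} I may assume, after possibly exchanging the letters, that the directive sequence of $\uu$ is $G^{a_1}D^{a_2}\cdots$, so that $\uu$ is a concatenation of the blocks $1^{a_1}0$ and $1^{a_1+1}0$, both of which genuinely occur (otherwise $\uu$ would be eventually periodic). Hence the two return words to $\varepsilon$ are $r = 1$ and $s = 0$, and $k = a_1$. Consecutive maximal runs of the letter $1$ in $\uu$ are separated by a single $0$, so the longest power of $1$ occurring in $\uu$ is $1^{a_1+1} = r^{k+1} = r^{k+1}b$; and since $00 \notin \mathcal{L}(\uu)$, the longest power of $0$ is $0 = sb$. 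This settles the base case.

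For the inductive step let $n \ge 1$ and let $b$ be the $n$th bispecial factor of $\uu$; write $\uu = \Delta_0(\uu')$ as in Proposition~\ref{Lem_Standard}, where $\uu'$ is the standard Sturmian sequence with directive sequence $\Delta_1\Delta_2\cdots$. Suppose $\Delta_0 = G$ (the case $\Delta_0 = D$ is identical with $G$ replaced by $D$ and the appended letter $1$ replaced by $0$, using Lemma~\ref{Lem_ImageD} and $b = D(b')0$). By item (ii) of Lemma~\ref{Lem_ImageG}, $b = G(b')1$ for the $(n-1)$th bispecial factor $b'$ of $\uu'$, and by item (iii) of Lemma~\ref{Lem_ImageG} the return words to $b$ are $r = G(r')$ and $s = G(s')$, with $r', s'$ the return words to $b'$ in $\uu'$ and $\dd_\uu(b) = \dd_{\uu'}(b')$. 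Since the decomposition of a standard Sturmian sequence into its two block types $r^k s$ and $r^{k+1}s$ is dictated precisely by its derived sequence to $b$, the equality $\dd_\uu(b) = \dd_{\uu'}(b')$ forces $\uu'$ to be concatenated from $(r')^k s'$ and $(r')^{k+1}s'$ with the same $k$. The induction hypothesis applied to $\uu'$ and $b'$ then gives that $(r')^{k+1}b'$ is the longest factor of $\uu'$ with fractional root $r'$ and that $s'b'$ is the longest factor of $\uu'$ with fractional root $s'$. Now Observation~\ref{Lem_FractionalRoot} yields that $G\bigl((r')^{k+1}b'\bigr)1$ is the longest factor of $\uu = G(\uu')$ with fractional root $G(r') = r$, and $G(s'b')1$ is the longest factor of $\uu$ with fractional root $G(s') = s$. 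Finally $G\bigl((r')^{k+1}b'\bigr)1 = \bigl(G(r')\bigr)^{k+1}\bigl(G(b')1\bigr) = r^{k+1}b$ and $G(s'b')1 = G(s')\bigl(G(b')1\bigr) = sb$, which is exactly the assertion for $\uu$ and $b$, so the induction closes.

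The delicate point will be the preservation of the parameter $k$ under peeling: this rests on the identity $\dd_\uu(b) = \dd_{\uu'}(b')$ together with the fact that $k$ is an invariant of the derived sequence, namely the length of the shortest maximal block of consecutive occurrences of the more frequent return word $r$. Once this is in hand, the two base-case computations and the transport through Observation~\ref{Lem_FractionalRoot} are routine, and the lemma follows.
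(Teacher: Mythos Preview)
Your proof is correct and follows essentially the same approach as the paper's: induction (you on the index $n$, the paper on $|b|$, which is equivalent since bispecial factors are linearly ordered), the same base case computation for $b=\varepsilon$ via Remark~\ref{Rem_Zamena}, and the same inductive step that peels off the first morphism via Lemmas~\ref{Lem_ImageG}/\ref{Lem_ImageD} and then transports the conclusion through Observation~\ref{Lem_FractionalRoot}. Your treatment is slightly more explicit than the paper's in two places---you spell out the $D$ case rather than folding it into a single WLOG, and you justify the preservation of $k$ via the equality $\dd_\uu(b)=\dd_{\uu'}(b')$, whereas the paper simply asserts that $\uu'$ is concatenated from $(r')^k s'$ and $(r')^{k+1}s'$---but these are expository refinements, not a different argument.
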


\begin{proof}
We proceed by induction on the length of $b$.
Without loss of generality, we assume that $\uu$ has the directive sequence ${\bf \Delta} = G^{a_1}D^{a_2}G^{a_3}D^{a_4} \cdots$.

The bispecial factor $b = \varepsilon$ has the return words $r= 1, s = 0$, and by Remark \ref{Rem_Zamena}, $\uu$ is concatenated from the blocks $1^{a_1}0 = r^{a_1}s$ and $1^{a_1 +1}0 = r^{a_1 + 1}s$.
Clearly, $r^{a_1+1}b = 1^{a_1+1}$ is the longest factor of $\uu$ with the fractional root $1$.
Similarly, $sb = 0$ is the longest factor of $\uu$ with the fractional root $0$.

Let $b$ be a bispecial factor of $\uu$ with $|b| \geq 1$ and let $\uu$ be concatenated from the blocks $r^{k}s$ and $r^{k+1}s$ for the return words $r,s$ to $b$ in $\uu$ and some $k \in \N$, $k \geq 1$.
By Proposition \ref{Lem_Standard}, there is a unique standard Sturmian sequence $\uu'$ such that $\uu = G(\uu')$.
By Lemmas \ref{Lem_ImageG} and \ref{Lem_ImageD}, there is a unique bispecial factor $b'$ of $\uu'$ with the return words $r'$ and $s'$ such that $b = G(b')1$, $r = G(r')$, and $s = G(s')$.
Moreover, $\uu'$ is concatenated from the blocks $(r')^{k}s'$ and $(r')^{k+1}s'$.
Clearly, $|b'| < |b|$ and so by the induction hypothesis, the words $(r')^{k+1}b'$ and  $s'b'$ are the longest factors of $\uu'$ with the fractional root $r'$ and $s'$, respectively.
But then by Observation \ref{Lem_FractionalRoot}, the words $r^{k+1}b = G((r')^{k+1}b')1$ and $sb = G(s'b')1$ are the longest factors of $\uu$ with the fractional root $r = G(r')$ and $s = G(s')$, respectively.
\end{proof}

Having in mind our goal to describe the critical exponent of any CS Rote sequence and
Theorem \ref{Lem_PrevodKritExpoRoteNaSturm}, we need to determine the indices of return words to bispecial factors in standard Sturmian sequences, i.e., the lengths of factors from Lemma \ref{mocniny}. We also want to distinguish, which of these return words are (un)stable.
Both of these tasks can be solved using the Parikh vectors of the relevant bispecial factors and their return words.
We deduce the explicit formulae for the needed Parikh vectors in Proposition \ref{ParikhRSB}.
For this purpose, we adopt the following notation.

\begin{nota}\label{Theta}
To a standard Sturmian sequence $\uu$  with the directive sequence ${\bf \Delta} = G^{a_1}D^{a_2}G^{a_3}D^{a_4} \cdots $ or ${\bf \Delta} = D^{a_1}G^{a_2}D^{a_3}G^{a_4} \cdots $ we assign an irrational number $\theta \in (0,1)$ with the continued fraction expansion
$$\theta = [0, a_1,a_2, a_3, \ldots ]\,.$$
For every $N\in \mathbb{N}$, we denote the $N^{th}$ \textit{convergent} to the number  $\theta$  by $\tfrac{p_N}{q_N}$ and  the $N^{th}$ convergent to the number $\frac{\theta}{1+\theta}$ by $\frac{p'_N}{q'_N}$.
\end{nota}

\begin{rem}\label{SlizeneZlomky}
Let us recall some basic properties of convergents. They can be found in any number theory textbook, e.g., \cite{Hensley}.

\begin{enumerate}
\item  The sequences $({p_N})$, $({q_N})$, and $({q'_N})$ fulfil the same  recurrence relation for all $N \in \N, N \geq 1$, namely
$$  X_{N} = a_{N} X_{N-1} +X_{N-2}\,, $$
but they differ in their initial values: $p_{-1} = 1, p_0 = 0$;  $q_{-1} = 0, q_0 = 1$;  $q'_{-1} =q'_0 = 1$. It implies for all $N \in \N$
$$
p_N +q_N = q'_N\,.
$$

\medskip

\item For all $N \in \N, N \geq 1$, we have

 \medskip
$
\left(\!\!\begin{array}{cc}1&0\\ a_1&1 \end{array}\!\!\right) \left(\!\!\begin{array}{cc}1&a_2\\ 0&1 \end{array}\!\!\right) \cdots
\left(\!\!\begin{array}{cc}1&0\\ a_{2N-1}&1 \end{array}\!\!\right) \left(\!\!\begin{array}{cc}1&a_{2N}\\ 0&1 \end{array}\!\!\right) =
\left(\!\!\begin{array}{cc}p_{2N-1}&p_{2N}\\ q_{2N-1}&q_{2N} \end{array}\!\!\right)\,;
$

\medskip

$
\left(\!\!\begin{array}{cc}1&0\\ a_1&1 \end{array}\!\!\right) \left(\!\!\begin{array}{cc}1&a_2\\ 0&1 \end{array}\!\!\right) \cdots  \left(\!\!\begin{array}{cc}1&a_{2N-2}\\ 0&1 \end{array}\!\!\right)
\left(\!\!\begin{array}{cc}1&0\\ a_{2N-1}&1 \end{array}\!\!\right)=
\left(\!\!\begin{array}{cc}p_{2N-1}&p_{2N-2}\\ q_{2N-1}&q_{2N-2} \end{array}\!\!\right)\,.
$
\end{enumerate}

\medskip

\end{rem}

\begin{rem}\label{density}
For the description of a standard Sturmian sequence $\uu$ we use the number $\theta$.
Usually, a standard Sturmian sequence is characterized by the so-called \textit{slope}, which is equal to the density of the letter 1 in the sequence $\uu$. In our notation, the  slope of $\uu$  is $\frac{\theta}{1+\theta} = [0, 1+a_1, a_2, a_3, \ldots ]$ if the directive sequence ${\bf \Delta}$ starts with $D$, otherwise the slope is $\frac{1}{1+\theta}  = [0, 1, a_1, a_2, a_3, \ldots ]$.
\end{rem}

In the sequel, we will need two auxiliary statements on convergents $\tfrac{p_N}{q_N}$ to $\theta$.

\begin{lem}\label{Lem_Convergents}
For all $N \in \N$ we have
 $$\left(\!\!\begin{array}{c}p_N\\ q_N\end{array}\!\!\right)\not =\left(\!\!\begin{array}{c}0\\ 0\end{array}\!\!\right) \mod 2 \qquad \text{and} \qquad \left(\!\!\begin{array}{c}p_N\\ q_N\end{array}\!\!\right)\not =\left(\!\!\begin{array}{c}p_{N-1}\\ q_{N-1}\end{array}\!\!\right) \mod 2 \,.$$
\end{lem}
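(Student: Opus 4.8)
The plan is to prove both congruence statements simultaneously by induction on $N$, using the recurrence $X_N = a_N X_{N-1} + X_{N-2}$ satisfied by $(p_N)$ and $(q_N)$ (Remark \ref{SlizeneZlomky}, Item 1). The natural object to track is the pair of consecutive Parikh-like vectors $\binom{p_N}{q_N}$ and $\binom{p_{N-1}}{q_{N-1}}$ reduced modulo $2$; I would show that at every step this ordered pair of vectors in $(\mathbb{Z}/2\mathbb{Z})^2$ avoids the three ``bad'' configurations, namely that neither vector is $\binom{0}{0}$ and that the two vectors are distinct. Since there are only $4$ vectors in $(\mathbb{Z}/2\mathbb{Z})^2$, and the transition from $\bigl(\binom{p_N}{q_N},\binom{p_{N-1}}{q_{N-1}}\bigr)$ to $\bigl(\binom{p_{N+1}}{q_{N+1}},\binom{p_N}{q_N}\bigr)$ depends only on $\binom{p_N}{q_N}$, $\binom{p_{N-1}}{q_{N-1}}$, and the parity of $a_{N+1}$, this is a finite case check.

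\textbf{Base case.} From the initial values $p_{-1}=1, p_0=0, q_{-1}=0, q_0=1$ we get $\binom{p_{-1}}{q_{-1}}=\binom{1}{0}$ and $\binom{p_0}{q_0}=\binom{0}{1}$ modulo $2$. Both are nonzero and they are distinct, so the claim holds for $N=0$ (and vacuously/for $N=-1$ as needed to seed the induction). Note that the pair $\{\binom{1}{0},\binom{0}{1}\}$ is in fact a basis of $(\mathbb{Z}/2\mathbb{Z})^2$.

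\textbf{Induction step.} Assume $v_N := \binom{p_N}{q_N}$ and $v_{N-1} := \binom{p_{N-1}}{q_{N-1}}$ are nonzero and distinct mod $2$. Two nonzero distinct vectors in $(\mathbb{Z}/2\mathbb{Z})^2$ are automatically linearly independent, hence $\{v_N, v_{N-1}\}$ is a basis. Now $v_{N+1} = a_{N+1} v_N + v_{N-1} \bmod 2$, which equals either $v_{N-1}$ (if $a_{N+1}$ is even) or $v_N + v_{N-1}$ (if $a_{N+1}$ is odd). In the first case $v_{N+1}=v_{N-1}\neq \binom{0}{0}$ and $v_{N+1}\neq v_N$ since $v_{N-1}\neq v_N$; in the second case $v_{N+1}=v_N+v_{N-1}$ is the unique third nonzero vector (because $v_N, v_{N-1}$ span and are independent, their sum is nonzero and differs from both). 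Either way the new pair $(v_{N+1}, v_N)$ again consists of two nonzero distinct vectors, closing the induction. This simultaneously gives $\binom{p_N}{q_N}\not\equiv\binom{0}{0}$ and $\binom{p_N}{q_N}\not\equiv\binom{p_{N-1}}{q_{N-1}}$ for all $N$.

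\textbf{Main obstacle.} There is no real obstacle here — the statement is essentially the observation that the $\mathrm{SL}_2(\mathbb{Z}/2\mathbb{Z})$ ``GCD'' dynamics never hits $0$, which is forced by the determinant identity $p_N q_{N-1} - p_{N-1} q_N = \pm 1$ (so the two vectors stay a basis mod $2$). The only thing to be careful about is packaging the ``two nonzero distinct vectors mod $2$ $\Rightarrow$ basis'' fact and the parity-of-$a_{N+1}$ case split cleanly, and making sure the induction is seeded correctly from the stated initial values. One could alternatively phrase the whole proof in one line by invoking $\det = \pm 1$, but the explicit $4$-element case analysis is completely elementary and self-contained.
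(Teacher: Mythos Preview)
Your proof is correct. The paper takes a slightly different route: it first dispatches the claim $\binom{p_N}{q_N}\not\equiv\binom{0}{0}\bmod 2$ in one line by invoking the coprimality of $p_N$ and $q_N$, and then proves the second claim by contradiction---taking the minimal $K$ with $\binom{p_K}{q_K}\equiv\binom{p_{K-1}}{q_{K-1}}$, applying the recurrence, and splitting on the parity of $a_K$ to reach either a smaller counterexample (contradicting minimality) or a violation of the first claim. Your approach instead treats the two claims simultaneously by a forward induction on the pair $(v_N,v_{N-1})$, using the observation that two distinct nonzero vectors in $(\mathbb{Z}/2\mathbb{Z})^2$ are automatically a basis. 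The paper's version is marginally shorter because it outsources half the work to the known coprimality fact; yours is fully self-contained and makes the underlying linear-algebra structure (equivalently, the $\det=\pm 1$ identity you mention) more transparent. Both are perfectly adequate for this elementary lemma.
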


\begin{proof}
The first statement is a consequence of the fact that $p_N$ and $q_N$ are coprime.  We show the second statement by contradiction.
 Assume that there exists  $K \in \N$  such that $\left(\!\!\begin{array}{c}p_K\\ q_K\end{array}\!\!\right) =\left(\!\!\begin{array}{c}p_{K-1}\\ q_{K-1}\end{array}\!\!\right) \mod 2$. Let $K$ denote  the smallest integer with this property. As  $q_{-1}= 0$ and $q_{0} =1$, necessarily,  $K>0$.  Using the recurrence relation satisfied by the convergents, we  can  write
$$
\left(\!\!\begin{array}{c}p_K\\ q_K\end{array}\!\!\right) = a_{K}\left(\!\!\begin{array}{c}p_{K-1}\\ q_{K-1}\end{array}\!\!\right)  + \left(\!\!\begin{array}{c}p_{K-2}\\ q_{K-2}\end{array}\!\!\right) =\left(\!\!\begin{array}{c}p_{K-1}\\ q_{K-1}\end{array}\!\!\right)   \mod 2\,.
$$

If $a_K$ is even, then the previous equation gives $ \left(\!\!\begin{array}{c}p_{K-2}\\ q_{K-2}\end{array}\!\!\right) =\left(\!\!\begin{array}{c}p_{K-1}\\ q_{K-1}\end{array}\!\!\right)   \mod 2$, which is  a contradiction with the minimality of $K$.

If $a_K$ is odd, then the previous equation gives $ \left(\!\!\begin{array}{c}p_{K-2}\\ q_{K-2}\end{array}\!\!\right) =\left(\!\!\begin{array}{c}0\\ 0\end{array}\!\!\right)   \mod 2$, which is a contradiction with the first statement.
\end{proof}

\begin{lem}\label{matice_konvergenty}
For all $N \in \N$, $N \geq 1$, we have
$$
a_N    \left(\!\!\begin{array}{c}p_{N-1}\\ q_{N-1} \end{array}\!\!\right)  + a_{N-1}  \left(\!\!\begin{array}{c}p_{N-2}\\ q_{N-2} \end{array}\!\!\right) + \cdots +   a_2\left(\!\!\begin{array}{c}p_{1}\\ q_{1} \end{array}\!\!\right)  +   a_1\left(\!\!\begin{array}{c}p_{0}\\ q_{0} \end{array}\!\!\right) =
  \left(\!\!\begin{array}{c}p_{N}\\ q_{N} \end{array}\!\!\right)   +   \left(\!\!\begin{array}{c}p_{N-1}\\ q_{N-1} \end{array}\!\!\right)   -   \left(\!\!\begin{array}{c}1\\1 \end{array}\!\!\right)\,.
$$
\end{lem}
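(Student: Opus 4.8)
The statement is a linear identity among the convergent vectors $\binom{p_N}{q_N}$, so the natural route is induction on $N$, using only the three-term recurrence $X_N = a_N X_{N-1} + X_{N-2}$ satisfied coordinatewise by $\binom{p_N}{q_N}$ (Remark \ref{SlizeneZlomky}, item 1), together with the initial data $\binom{p_{-1}}{q_{-1}} = \binom{1}{0}$, $\binom{p_0}{q_0} = \binom{0}{1}$. Write $\vec c_N = \binom{p_N}{q_N}$ and let $L_N = \sum_{j=1}^{N} a_j\,\vec c_{j-1}$ denote the left-hand side, $R_N = \vec c_N + \vec c_{N-1} - \binom{1}{1}$ the right-hand side; the claim is $L_N = R_N$ for all $N \ge 1$.

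First I would check the base case $N = 1$: $L_1 = a_1\binom{0}{1} = \binom{0}{a_1}$, while $R_1 = \binom{p_1}{q_1} + \binom{p_0}{q_0} - \binom{1}{1} = \binom{1}{a_1} + \binom{0}{1} - \binom{1}{1} = \binom{0}{a_1}$, using $p_1 = a_1 p_0 + p_{-1} = 1$ and $q_1 = a_1 q_0 + q_{-1} = a_1$. For the inductive step, assume $L_{N-1} = R_{N-1}$; then
$$
L_N = L_{N-1} + a_N \vec c_{N-1} = \vec c_{N-1} + \vec c_{N-2} - \tbinom{1}{1} + a_N \vec c_{N-1}
    = \bigl(a_N \vec c_{N-1} + \vec c_{N-2}\bigr) + \vec c_{N-1} - \tbinom{1}{1} = \vec c_N + \vec c_{N-1} - \tbinom{1}{1} = R_N,
$$
where the penultimate equality is exactly the three-term recurrence. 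This closes the induction.

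There is essentially no obstacle here: the only thing to be careful about is the indexing at the bottom of the recursion — the sum on the left runs down to the term $a_1 \binom{p_0}{q_0}$, so the induction should be anchored at $N=1$ (not $N=0$, where the left side would be an empty sum and the identity would read $\binom{0}{0} \stackrel{?}{=} \vec c_0 + \vec c_{-1} - \binom{1}{1} = \binom{0}{1}+\binom{1}{0}-\binom{1}{1}=\binom{0}{0}$, which in fact also holds and could serve as an alternative, cleaner base case). Either anchoring works; I would present the $N=1$ base case for concreteness and keep the inductive step verbatim as above, since it is just one application of $X_N = a_N X_{N-1} + X_{N-2}$ plus the induction hypothesis.
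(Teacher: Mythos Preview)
Your proof is correct and takes exactly the same approach as the paper, which simply states that the identity ``can be easily proved by induction on $N$''. Your write-up supplies precisely the details one would expect: the base case and the inductive step via the recurrence $X_N = a_N X_{N-1} + X_{N-2}$.
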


\begin{proof}
It can be easily proved by induction on $N$.
\end{proof}

The Parikh vectors of the bispecial factors of $\uu$  and the corresponding return words  can be easily expressed using the convergents $\frac{p_N}{q_N}$ to $\theta$.
We will use these expressions essentially in the next sections.

\begin{prop}\label{ParikhRSB}
Let $b$ be the $n^{th}$ bispecial factor of $\uu$ with the directive sequence $G^{a_1}D^{a_2}G^{a_3}D^{a_4}\cdots $.
We denote the more and the less frequent return word to $b$ in $\uu$ by  $r$ and $s$, respectively.
Put $a_0=0$ and  write $n$ in the form $n =m+ a_0+ a_1+a_2+\cdots +a_N $ for a unique $N \in \N$ and $0\leq m<a_{N+1}$.
Then

\medskip
\begin{enumerate}
\item $\vec{V}(r)  = \left(\!\!\begin{array}{c}p_N\\ q_N\end{array}\!\!\right)$;

\medskip
\item $ \vec{V}(s)  = \left(\!\!\begin{array}{c}m\,p_{N}+p_{N-1}\\ m\,q_{N}+q_{N-1}\end{array}\!\!\right)$;

\medskip
\item $\vec{V}(b)= (m+1)\left(\!\!\begin{array}{c}p_{N}\\ q_{N} \end{array}\!\!\right)   +   \left(\!\!\begin{array}{c}p_{N-1}\\ q_{N-1} \end{array}\!\!\right)   -   \left(\!\!\begin{array}{c}1\\1 \end{array}\!\!\right)  $.
\end{enumerate}
\end{prop}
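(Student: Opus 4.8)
The plan is to prove the three formulae simultaneously by induction on $N$, following the same inductive scheme already used in the proof of Lemma~\ref{mocniny}, but now tracking Parikh vectors via Corollary~\ref{obrazyBS}. The base case $N=0$ corresponds to the first $a_1$ bispecial factors of $\uu$, i.e.\ to $n=m$ with $0\le m<a_1$. These are exactly the bispecial factors $\varepsilon,1,11,\ldots,1^{a_1-1}$ obtained inside the block $G^{a_1}$. For the $m^{th}$ such factor we have $b=1^m$, the more frequent return word is $r=1$, and the less frequent return word is $s=1^m0$ (recall from Remark~\ref{Rem_Zamena} that $\uu$ is a concatenation of $1^{a_1}0$ and $1^{a_1+1}0$). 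Hence $\vec V(r)=\binom{1}{0}=\binom{p_0}{q_0}$, $\vec V(s)=\binom{m}{1}=m\binom{p_0}{q_0}+\binom{p_{-1}}{q_{-1}}$, and $\vec V(b)=\binom{m}{m}=(m+1)\binom{p_0}{q_0}+\binom{p_{-1}}{q_{-1}}-\binom{1}{1}$, using $p_0=0,q_0=1,p_{-1}=1,q_{-1}=0$. So all three formulae hold for $N=0$.

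\textbf{Inductive step.} Suppose the formulae hold for all standard Sturmian sequences whose directive sequence begins with a $D$-block, in particular for $\uu'$ with directive sequence $D^{a_2}G^{a_3}D^{a_4}\cdots$, and for its $k^{th}$ bispecial factor with $k=m'+a_2+a_3+\cdots+a_{N}$ where now the relevant convergents $\frac{p'_j}{q'_j}$ are those of $[0,a_2,a_3,\ldots]$; note $\binom{p'_{j}}{q'_{j}}$ for $[0,a_2,a_3,\ldots]$ equals $\binom{q_{j}}{p_{j+1}}$-type shifted quantities for $\theta=[0,a_1,a_2,\ldots]$ — I will instead phrase the induction directly in terms of ``$\uu$ has directive sequence starting with $G^{a_1}$ and $\uu=G^{a_1}(\uu')$'' so that the convergents transform cleanly. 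Given the $n^{th}$ bispecial factor $b$ of $\uu$ with $n=m+a_1+(a_2+\cdots+a_N)$, $0\le m<a_{N+1}$ and $N\ge1$: if $m<a_{N+1}$ with $N\ge 1$ we peel off the whole first block. Concretely, write $\uu=G^{a_1}(\uu')$ where $\uu'$ has directive sequence $D^{a_2}G^{a_3}\cdots$; by Corollary~\ref{obrazyBS}(1) the $(n-a_1)^{th}$ bispecial factor $b'$ of $\uu'$ and its return words $r',s'$ satisfy $\vec V(b)=\bigl(\begin{smallmatrix}1&0\\a_1&1\end{smallmatrix}\bigr)\vec V(b')+a_1\binom{0}{1}$ and $\vec V(r)=\bigl(\begin{smallmatrix}1&0\\a_1&1\end{smallmatrix}\bigr)\vec V(r')$, $\vec V(s)=\bigl(\begin{smallmatrix}1&0\\a_1&1\end{smallmatrix}\bigr)\vec V(s')$. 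Here I use Lemma~\ref{Lem_ImageG}(iii) to guarantee that $G$ preserves which return word is the more frequent one (the derived sequences agree, so the block exponent $k$ is unchanged). Apply the induction hypothesis to $\uu'$: its convergents $\widetilde p_j/\widetilde q_j$ of $[0,a_2,a_3,\ldots]$ relate to those of $\theta$ by $\binom{p_{j+1}}{q_{j+1}}=\bigl(\begin{smallmatrix}1&0\\a_1&1\end{smallmatrix}\bigr)\binom{\widetilde p_{j}}{\widetilde q_{j}}$ for the appropriate index matching (this is precisely the shift underlying Remark~\ref{SlizeneZlomky}(2), reading off one extra factor $M_G^{a_1}$). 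Substituting the IH expressions for $\vec V(r'),\vec V(s'),\vec V(b')$ and using that matrix identity plus Lemma~\ref{matice_konvergenty} to handle the additive correction $a_1\binom{0}{1}$ (together with $-\binom{1}{1}$ mapping to $-\binom{1}{1}$ since $\bigl(\begin{smallmatrix}1&0\\a_1&1\end{smallmatrix}\bigr)\binom{1}{1}=\binom{1}{a_1+1}$, which must be absorbed using $a_1\binom{0}{1}$) yields exactly formulae (1)--(3) with the convergents of $\theta$ and the new index decomposition. When $N=1$, the induction hypothesis is just the base case for $\uu'$, so no deeper recursion is needed; when $N\ge2$ one iterates. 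The symmetric situation (directive sequence starting with $D$) is handled by Corollary~\ref{obrazyBS}(2) and Remark~\ref{Rem_Zamena}, or simply by applying $E$.

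\textbf{Main obstacle.} The routine part is the matrix bookkeeping; the delicate part is getting the \emph{index shift} right — i.e.\ correctly matching the decomposition $n=m+a_0+a_1+\cdots+a_N$ of $\uu$ with the decomposition of $n-a_1$ for $\uu'$, and simultaneously matching the convergents $\frac{p_N}{q_N}$ of $[0,a_1,a_2,\ldots]$ with the convergents of $[0,a_2,a_3,\ldots]$ across the removal of one block. In particular one has to be careful that peeling a $G^{a_1}$-block shifts $N\mapsto N-1$ and relabels $(a_2,a_3,\ldots)$ as the new $(a_1,a_2,\ldots)$, so the ``$p_{N-1},q_{N-1}$'' terms in (2) and (3) come from the ``$p_N,q_N$'' of the smaller system, and the additive vector $a_1\binom{0}{1}$ together with Lemma~\ref{matice_konvergenty} is exactly what converts the IH's $\binom{\widetilde p_{N-1}}{\widetilde q_{N-1}}-\binom{1}{1}$ into $\binom{p_{N-1}}{q_{N-1}}-\binom{1}{1}$. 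I would verify this index/convergent correspondence once carefully at the start (a short lemma or an explicit remark) and then the induction closes cleanly.
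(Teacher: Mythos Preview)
Your inductive approach is valid in principle, but it differs from the paper's argument, which is more direct and sidesteps precisely the index-shift bookkeeping you flag as the main obstacle. The paper does not induct on $N$: it peels off \emph{all} of the first $N$ blocks at once. Concretely (for $N$ even), it passes to the standard Sturmian sequence $\uu'$ with directive sequence $G^{a_{N+1}}D^{a_{N+2}}\cdots$, where the $m^{th}$ bispecial factor is $b'=1^m$ with return words $r'=1$ and $s'=1^m0$ --- exactly your base case. Then $r=G^{a_1}D^{a_2}\cdots D^{a_N}(r')$ and $s=G^{a_1}D^{a_2}\cdots D^{a_N}(s')$, and their Parikh vectors are obtained in one step by multiplying by the full matrix product, which Remark~\ref{SlizeneZlomky}(2) identifies as $\left(\begin{smallmatrix}p_{N-1}&p_N\\q_{N-1}&q_N\end{smallmatrix}\right)$. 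For $\vec V(b)$, applying Corollary~\ref{obrazyBS} $N$ times yields a telescoping sum that Lemma~\ref{matice_konvergenty} collapses directly to formula~(3). This completely avoids matching the convergents of $[0,a_1,a_2,\ldots]$ with those of $[0,a_2,a_3,\ldots]$ and the $G$/$D$ alternation your one-block-at-a-time induction must manage.

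A minor but real slip: your base-case Parikh vectors are miscomputed. With the paper's convention $\vec V(w)=\binom{|w|_0}{|w|_1}$ (check $M_G\binom{1}{0}=\binom{1}{1}=\vec V(10)$), you should have $\vec V(1)=\binom{0}{1}$, $\vec V(1^m0)=\binom{1}{m}$, and $\vec V(1^m)=\binom{0}{m}$; your displayed values $\binom{1}{0}$, $\binom{m}{1}$, $\binom{m}{m}$ are wrong (the last is not even a coordinate swap). The right-hand sides with $p$'s and $q$'s that you verify against are correct, so this does not break the logic, but it should be fixed before the induction can be trusted.
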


\begin{proof}
First we suppose that $N$ is even and we denote the standard Sturmian sequence with the directive sequence $G^{a_{N+1}}D^{a_{N+2}}G^{a_{N+3}}\cdots$ by $\uu'$.
By Remark \ref{Rem_Zamena}, $\uu'$ is concatenated from the blocks $1^{a_{N+1}}0$ and $1^{a_{N+1}+1}0$.
Thus its $m^{th}$ bispecial factor is $b'=1^m$ and the return words to $b'$ in $\uu'$ are $r' =1$ and  $s' = 1^m0$.

By Lemmas \ref{Lem_ImageG} and \ref{Lem_ImageD} and Remark \ref{Rem_DerivedSeq},
$$
r = G^{a_1}D^{a_2}\cdots G^{a_{N-1}}D^{a_{N}}( r')\qquad \text{and}\qquad s= G^{a_1}D^{a_2}\cdots G^{a_{N-1}}D^{a_{N}}( s')\,.
$$
By Corollary \ref{obrazyBS} and  Lemma \ref{matice_konvergenty}, the Parikh vectors of $r$ and $s$ satisfy
\begin{align*}
\vec{V}(r) &= \left(\!\!\begin{array}{cc}1&0\\ a_1&1 \end{array}\!\!\right) \left(\!\!\begin{array}{cc}1&a_2\\ 0&1 \end{array}\!\!\right) \cdots
\left(\!\!\begin{array}{cc}1&0\\ a_{N-1}&1 \end{array}\!\!\right) \left(\!\!\begin{array}{cc}1&a_{N}\\ 0&1 \end{array}\!\!\right) \left(\!\!\begin{array}{c}0\\ 1\end{array}\!\!\right) =
\left(\!\!\begin{array}{c}p_{N}\\ q_{N} \end{array}\!\!\right) \,; \\
\vec{V}(s) &= \left(\!\!\begin{array}{cc}1&0\\ a_1&1 \end{array}\!\!\right) \left(\!\!\begin{array}{cc}1&a_2\\ 0&1 \end{array}\!\!\right) \cdots
\left(\!\!\begin{array}{cc}1&0\\ a_{N-1}&1 \end{array}\!\!\right) \left(\!\!\begin{array}{cc}1&a_{N}\\ 0&1 \end{array}\!\!\right) \left(\!\!\begin{array}{c}1\\ m\end{array}\!\!\right) =
\left(\!\!\begin{array}{c}m\,p_{N}+p_{N-1}\\ m\,q_{N}+q_{N-1} \end{array}\!\!\right) \,.
\end{align*}
To find the Parikh vector of $b$ we start with the bispecial factor $b' = 1^m$  and $N$ times apply Corollary \ref{obrazyBS}.
Eventually, we rewrite the arising products of matrices by Lemma \ref{matice_konvergenty} and we get
\begin{align*}
\vec{V}(b)  &= m\, \left(\!\!\begin{array}{c}p_{N}\\ q_{N} \end{array}\!\!\right)  + a_N    \left(\!\!\begin{array}{c}p_{N-1}\\ q_{N-1} \end{array}\!\!\right)  + a_{N-1}  \left(\!\!\begin{array}{c}p_{N-2}\\ q_{N-2} \end{array}\!\!\right) + \cdots +   a_2\left(\!\!\begin{array}{c}p_{1}\\ q_{1} \end{array}\!\!\right)  +   a_1\left(\!\!\begin{array}{c}p_{0}\\ q_{0} \end{array}\!\!\right) \,.
\end{align*}
This together with Lemma \ref{matice_konvergenty} implies the statement of Item 3 for $N$ even.
The proof for $N$ odd is analogous.
\end{proof}

\begin{rem}\label{Rem_GandDexchanged}
If we assume in Proposition~\ref{ParikhRSB} that $\uu$ has the directive sequence ${\bf \Delta} = D^{a_1}G^{a_2}D^{a_3}G^{a_4}\cdots $, then by Remark~\ref{Rem_Zamena}, the coordinates of the Parikh vectors will be exchanged, i.e.,
$$
\vec{V}(r)  = \left(\!\!\begin{array}{c}q_N\\ p_N\end{array}\!\!\right), \ \vec{V}(s)  = \left(\!\!\begin{array}{c} m\,q_{N}+q_{N-1} \\ m\,p_{N}+p_{N-1}\end{array}\!\!\right), \ \text{and} \ \vec{V}(b)= (m+1)\left(\!\!\begin{array}{c}q_{N}\\ p_{N} \end{array}\!\!\right)   +   \left(\!\!\begin{array}{c}q_{N-1}\\ p_{N-1} \end{array}\!\!\right)   -   \left(\!\!\begin{array}{c}1\\1 \end{array}\!\!\right).
$$
\end{rem}

\section{The critical exponent of CS Rote sequences} \label{S_CriticalExponentRote}

We are going to give an explicit formula for the critical exponent of a CS Rote sequence $\vv$.
We will use Theorem \ref{Lem_PrevodKritExpoRoteNaSturm} which requires the knowledge of the indices of return words to bispecial factors in the Sturmian sequence $\S(\vv)$.
It is well-known that there is a unique standard Sturmian sequence $\uu$ such that both $\S(\vv)$ and $\uu$ have the same language.
Since the critical exponent depends only on the language, we can work with the standard Sturmian sequence $\uu$ instead of $\S(\vv)$.

In the following proposition and theorem, we use  Notation \ref{Theta}.

\begin{prop}\label{Prop_KritExpo}
Let $\uu$  be a standard Sturmian sequence with the directive sequence $G^{a_1}D^{a_2}G^{a_3}D^{a_4}\cdots$ or $D^{a_1} G^{a_2} D^{a_3}G^{a_4} \cdots $ and let $n \in \N$ be given.
We put $a_0 = 0$ and we denote the more and the less frequent return word to the $n^{th}$ bispecial factor of $\uu$ by $r$ and $s$, respectively.
\begin{enumerate}
\item If $n =m+ a_0+a_1+a_2+\cdots +a_N $, where $0\leq m<a_{N+1}$, \ \ then  \ \
$\emph{ind}(r) = a_{N+1} +2 + \frac{q'_{N-1}-2}{q'_N}$.
\item If $n =a_0+ a_1+a_2+\cdots +a_N$, \ \ then  \ \
$ \emph{ind}(s) = a_{N} +2 + \frac{q'_{N-2}-2}{q'_{N-1}}$.
\item  If $n =m+a_0+ a_1+a_2+\cdots +a_N $, where $0<m<a_{N+1}$, \ \ then  \ \
$\emph{ind}(s) = 2 + \frac{q'_{N}-2}{ m\,q'_{N} +q'_{N-1}}$.
\end{enumerate}
\end{prop}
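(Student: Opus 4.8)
The plan is to compute the indices $\ind(r)$ and $\ind(s)$ directly from Lemma~\ref{mocniny} together with the explicit Parikh vectors provided by Proposition~\ref{ParikhRSB}. Recall from Lemma~\ref{mocniny} that if $\uu$ is concatenated from blocks $r^ks$ and $r^{k+1}s$, then $r^{k+1}b$ is the longest factor of $\uu$ with fractional root $r$, so $\ind(r) = (|r^{k+1}b|)/|r| = k+1 + |b|/|r|$; similarly $\ind(s) = 1 + |b|/|s|$. Thus the whole proof reduces to (a) identifying the exponent $k$ in the block decomposition for the $n^{th}$ bispecial factor, and (b) evaluating the lengths $|r|, |s|, |b|$ via the $\ell^1$-norms of the Parikh vectors from Proposition~\ref{ParikhRSB}, then rewriting everything in terms of the $q'_N$ using the identity $p_N + q_N = q'_N$ from Remark~\ref{SlizeneZlomky}.

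First I would fix the convention (say the directive sequence starts with $G$, the other case being symmetric by Remark~\ref{Rem_GandDexchanged}, which only swaps the coordinates and hence does not change the $\ell^1$-norms) and recall from Remark~\ref{Rem_Zamena} and the proof of Proposition~\ref{ParikhRSB} that the $n^{th}$ bispecial factor, with $n = m + a_0 + \cdots + a_N$ and $0 \le m < a_{N+1}$, arises from the $m^{th}$ bispecial factor $b' = 1^m$ of $\uu'$ (directive sequence $G^{a_{N+1}}D^{a_{N+2}}\cdots$ when $N$ is even), which has return words $r' = 1$, $s' = 1^m 0$, and whose sequence $\uu'$ is concatenated from $1^{a_{N+1}}0 = (r')^{a_{N+1}-m}s'$ and $1^{a_{N+1}+1}0 = (r')^{a_{N+1}-m+1}s'$. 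By Lemmas~\ref{Lem_ImageG}, \ref{Lem_ImageD} and their iteration, the morphism images preserve the block structure, so $\uu$ is concatenated from $r^{a_{N+1}-m}s$ and $r^{a_{N+1}-m+1}s$; that is, the exponent is $k = a_{N+1}-m$. Then Lemma~\ref{mocniny} gives $\ind(r) = k+1 + |b|/|r| = a_{N+1}-m+1 + |b|/|r|$.

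Next I would plug in norms. From Proposition~\ref{ParikhRSB}, $|r| = p_N + q_N = q'_N$ and $|b| = (m+1)q'_N + q'_{N-1} - 2$, so
\begin{align*}
\ind(r) &= a_{N+1}-m+1 + \frac{(m+1)q'_N + q'_{N-1} - 2}{q'_N} = a_{N+1}-m+1 + (m+1) + \frac{q'_{N-1}-2}{q'_N} = a_{N+1}+2+\frac{q'_{N-1}-2}{q'_N},
\end{align*}
which is Item~1. For $\ind(s)$, note $|s| = m\,q'_N + q'_{N-1}$ (again summing coordinates and using $p+q=q'$). When $0 < m < a_{N+1}$, Lemma~\ref{mocniny} gives $\ind(s) = 1 + |b|/|s| = 1 + \big((m+1)q'_N + q'_{N-1} - 2\big)/\big(m\,q'_N + q'_{N-1}\big)$; writing $(m+1)q'_N + q'_{N-1} - 2 = (m\,q'_N + q'_{N-1}) + q'_N - 2$ yields $\ind(s) = 2 + (q'_N-2)/(m\,q'_N + q'_{N-1})$, which is Item~3. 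Item~2 is the boundary case $m = 0$: then $n = a_0 + \cdots + a_N$ also equals $(m'+a_0+\cdots+a_{N-1})$ with $m' = a_N$ (viewing the last block as full), so $s$ now plays the role of the less frequent return word to a bispecial factor "one level up"; concretely $|s| = q'_{N-1}$ when $m=0$, and by the same reasoning as for $r$ but with index $N-1$ the exponent works out to $\ind(s) = a_N + 2 + (q'_{N-2}-2)/q'_{N-1}$. I would handle this by carefully re-expanding $n$ with $N$ decreased by one and $m$ replaced by $a_N$, applying the already-proven Item~1-type computation to $s$.

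The main obstacle I anticipate is the bookkeeping at the boundary $m = 0$: one must be precise about which bispecial factor is "the $n^{th}$", how the block decomposition degenerates, and that $s$ for $b$ equals $r$-type data at level $N-1$. A secondary point requiring care is verifying that the recurrences $q'_{-1} = q'_0 = 1$ and $q'_N = a_N q'_{N-1} + q'_{N-2}$ make all the displayed fractions well-defined (denominators positive) and that the formula for $|b|$ holds uniformly, including small $N$; this is exactly where Lemma~\ref{matice_konvergenty} and Remark~\ref{SlizeneZlomky} do the work, so I would cite them rather than re-derive. Once the identification $k = a_{N+1}-m$ and the norm formulas are in place, the remaining algebra is the routine rearrangement shown above.
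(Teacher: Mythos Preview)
Your approach is correct and matches the paper's: identify $k = a_{N+1}-m$ via the derived sequence (the paper does this by citing Corollary~\ref{obrazyBS} and Remark~\ref{Rem_DerivedSeq}), then read off $|r| = q'_N$, $|s| = m\,q'_N + q'_{N-1}$, $|b| = (m+1)q'_N + q'_{N-1} - 2$ from Proposition~\ref{ParikhRSB} and plug into the two formulas coming from Lemma~\ref{mocniny}. Your only overcomplication is Item~2: there is no need to ``re-expand $n$ with $N$ decreased by one and $m$ replaced by $a_N$'' (and indeed $m'=a_N$ violates the range hypothesis of Proposition~\ref{ParikhRSB}); the paper simply sets $m=0$ in the same expression $\ind(s) = 1 + |b|/|s| = (q'_N + 2q'_{N-1}-2)/q'_{N-1}$ and applies the recurrence $q'_N = a_N q'_{N-1} + q'_{N-2}$ once to obtain $a_N + 2 + (q'_{N-2}-2)/q'_{N-1}$, so the ``main obstacle'' you anticipate does not arise.
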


Let us comment on what is meant by $q'_{-2}$ in the case $N=0$ in Item 2: we define $q'_{-2}$ to satisfy  the recurrence relation $1=q'_{0} = a_0q'_{-1} + q'_{-2} = q'_{-2 }$.

\begin{proof}
We assume that $N$ is even (the case of $N$ odd is analogous). Moreover, we assume that $\uu$ has the directive sequence $G^{a_1} D^{a_2} G^{a_3} D^{a_4} \cdots $.

Since $b$ is the $n^{th}$ bispecial factor  in $\uu$, the derived sequence $\dd_{\uu}(b)$ is standard Sturmian with  the directive sequence $G^{k}D^{a_{N+2}}G^{a_{N+3}}\cdots $, where  $k=a_{N+1}-m$, see Corollary \ref{obrazyBS}.
By Remark \ref{Rem_Zamena}, $\dd_{\uu}(b)$  is a concatenation of the blocks $1^k0$ and $1^{k+1}0$. Therefore, the sequence $\uu$ is concatenated from the blocks $r^ks$ and $r^{k+1}s$, where $r$ and $s$ are the return words to $b$.
By Lemma \ref{mocniny}, the factor $r^{k+1}b$ is the longest factor of $\uu$ with the fractional root $r$ and $sb$ is the longest factor of $\uu$ with the fractional root $s$.
In other words, $r^{\ind(r)}=r^{k+1}b$ and $s^{\ind(s)}= sb$.
By Proposition  \ref{ParikhRSB} and Remark \ref{SlizeneZlomky}, we have
$$|r| = p_N +q_N = q'_N,  \quad  \ |s| = mq_N' + q'_{N-1} \,, \quad \text{and} \ \quad  |b| =  (m+1)q'_N  + q'_{N-1} - 2\,. $$
As $|r^{k+1}b| = (k+1)|r| + |b| = (a_{N+1}-m+1) |r|+|b|  = (a_{N+1}+2) q'_N + q'_{N-1} -2$,  we get $$ \ind(r)=\tfrac{|r^{k+1}b|}{|r|} = a_{N+1} +2 + \tfrac{q'_{N-1}-2}{q'_N}\,.$$
As $|sb| =  (2m+1)q_N' + 2q'_{N-1} - 2$, we  get for $m=0$
$$\ind(s)=\tfrac{|sb|}{|s|} = \tfrac{q_N' + 2q'_{N-1} - 2}{ q'_{N-1}  } = \tfrac{a_Nq'_{N-1}+q'_{N-2}+ 2q'_{N-1} - 2}{ q'_{N-1}  }  = a_{N} +2 + \tfrac{q'_{N-2}-2}{q'_{N-1}}\,$$
and  for $m>0$
$$ \ind(s)= \tfrac{|sb|}{|s|} = \tfrac{(2m+1)q_N' + 2q'_{N-1} - 2}{m\,q'_{N} +q'_{N-1}} =  2 + \tfrac{q'_{N}-2}{ m\,q'_{N} +q'_{N-1}}\,.$$

If the directive sequence equals $D^{a_1} G^{a_2} D^{a_3} G^{a_4} \cdots$, only the coordinates of the Parikh vectors of $r$, $s$, $b$ are exchanged (see Remark \ref{Rem_GandDexchanged}).
\end{proof}

\begin{thm} \label{T_KritExpo}
Let $\vv$ be a CS Rote sequence and let $\uu$ be the standard Sturmian sequence such that $\mathcal{L}(\S(\vv)) = \mathcal{L}(\uu)$.
Then
$
\emph{cr}(\vv) = \sup (M_1 \cup M_2 \cup M_3)\,, \text{ where }
$
\begin{align*}
 \ \ M_1 &= \left\{a_{N+1} + 2 + \frac{q'_{N-1}-1}{q'_N} : \ q_N \text{ is even, } N\in \N\right\}\,;  \\
M_2 &= \left\{\frac{a_{N+1} + 2}{2} + \frac{q'_{N-1}-1}{2 q'_N} : \ q_N \text{ is odd,  } N \in \N\right\}\,; \\
M_3 &= \left\{2 + \frac{q'_{N}-1}{q'_{N-1} + q'_N} : \ q_{N-1} ,  q_{N} \text{ are  odd and }  a_{N+1}>1, N \geq 1\right\} \,
\end{align*}
if the directive sequence of   $\uu$  is $G^{a_1}D^{a_2}G^{a_3}D^{a_4}\cdots$, and
\begin{align*}
M_1 &= \left\{a_{N+1} + 2 + \frac{q'_{N-1}-1}{q'_N} : \ p_N \text{ is even, } N\in \N\right\}\,; \\
M_2 &= \left\{\frac{a_{N+1} + 2}{2} + \frac{q'_{N-1}-1}{2 q'_N} : \ p_N \text{ is odd,  } N \in \N\right\}\,; \\
M_3 &= \left\{2 + \frac{q'_{N}-1}{q'_{N-1} + q'_N} : \ p_{N-1} ,  p_{N} \text{ are  odd and }  a_{N+1}>1, N \geq 1\right\} \,
\end{align*}
if  the directive sequence of   $\uu$  is  $D^{a_1}G^{a_2}D^{a_3}G^{a_4}\cdots$.
\end{thm}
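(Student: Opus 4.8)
The plan is to combine Theorem~\ref{Lem_PrevodKritExpoRoteNaSturm} with Proposition~\ref{Prop_KritExpo} and the stability dichotomy. By Theorem~\ref{Lem_PrevodKritExpoRoteNaSturm}, $\CR(\vv) = \sup(A_1 \cup A_2)$, where $A_1$ collects the values $\ind_\uu(u) + \tfrac{1}{|u|}$ over stable return words $u$ to bispecial factors of $\uu$, and $A_2$ collects the values $\tfrac12(\ind_\uu(u) + \tfrac{1}{|u|})$ over unstable return words $u$ to bispecial factors. So the first step is to enumerate all return words to all bispecial factors of $\uu$: by Vuillon's theorem (Remark~\ref{Rem_Zamena}) each bispecial factor $b$ has exactly two return words, the more frequent $r$ and the less frequent $s$, and Proposition~\ref{ParikhRSB} gives their Parikh vectors in terms of the convergents, namely $\vec V(r) = \binom{p_N}{q_N}$ and $\vec V(s) = \binom{m p_N + p_{N-1}}{m q_N + q_{N-1}}$ when $n = m + a_0 + \cdots + a_N$ with $0 \le m < a_{N+1}$ (assuming the directive sequence starts with $G$; the $D$-case is symmetric by Remark~\ref{Rem_GandDexchanged}).

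Next I would determine stability. A word $u$ is stable iff $|u|_1$ is even, i.e. iff the first coordinate of $\vec V(u)$ is even. For $r$ this means $p_N$ even (directive sequence starting with $G$); for $s$ with $n = a_0 + \cdots + a_N$ (i.e. $m = 0$) we have $\vec V(s) = \binom{p_{N-1}}{q_{N-1}}$, so $s$ is stable iff $p_{N-1}$ is even — but this is the same condition as $r$ being stable for the bispecial factor indexed by $a_0 + \cdots + a_{N-1}$, so this case is subsumed. For $s$ with $0 < m < a_{N+1}$, stability means $m p_N + p_{N-1}$ even. Then I plug the indices from Proposition~\ref{Prop_KritExpo} into $A_1$ and $A_2$. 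For $r$: $\ind(r) + \tfrac{1}{|r|} = a_{N+1} + 2 + \tfrac{q'_{N-1}-2}{q'_N} + \tfrac{1}{q'_N} = a_{N+1} + 2 + \tfrac{q'_{N-1}-1}{q'_N}$, which lands in $M_1$ if $p_N$ is even (stable) and in $M_2$ after halving if $p_N$ is odd (unstable). For $s$ with $m > 0$: $\ind(s) + \tfrac{1}{|s|} = 2 + \tfrac{q'_N - 2}{m q'_N + q'_{N-1}} + \tfrac{1}{m q'_N + q'_{N-1}} = 2 + \tfrac{q'_N - 1}{m q'_N + q'_{N-1}}$, which is maximized over $0 < m < a_{N+1}$ at $m = 1$, giving $2 + \tfrac{q'_N - 1}{q'_N + q'_{N-1}}$; this contributes only when $a_{N+1} > 1$ (so that such an $m$ exists) and falls into $M_3$ when it is stable, i.e. when $p_N + p_{N-1}$ is even, which (since consecutive $p$'s can't both be even by Lemma~\ref{Lem_Convergents}, applied to the $p$-sequence as it satisfies the same recurrence) is equivalent to $p_{N-1}, p_N$ both odd.

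The remaining bookkeeping is: show the $m > 0$, unstable case for $s$ contributes nothing new beyond $M_1 \cup M_2$. Here I would argue that when $m p_N + p_{N-1}$ is odd the value $\tfrac12(2 + \tfrac{q'_N-1}{m q'_N + q'_{N-1}})$ is at most $\tfrac32 < 2 \le \sup M_1$ (note $A_1$ always contains $\ind_\uu(0) + 1 \ge 2$ from the bispecial factor $\varepsilon$), so it is dominated; similarly the $m=0$ stable value for $s$ equals an $M_1$ element and the $m=0$ unstable value for $s$ is dominated by the corresponding $M_2$ element or by $2$. One must also check that $M_3$'s requirement "$q_{N-1}, q_N$ odd" in the statement is the correct transcription of "$p_{N-1}, p_N$ odd" — wait, in the $G$-case the statement writes $q_N$ even for $M_1$, not $p_N$; this is because Proposition~\ref{ParikhRSB} with directive sequence starting $G$ gives $\vec V(r) = \binom{p_N}{q_N}$ but the \emph{letter} whose count governs stability via $\S$ — recheck: $\S(v0)$ has $|u|_1 = |v|_{\text{changes}}$; actually stability of $u = \S(v0)$ is $|u|_1 \equiv 0$, and $|u|_1$ equals the second coordinate count depending on convention — so the precise correspondence between "first coordinate even" and "$q_N$ even vs $p_N$ even" must be pinned down from Definition~\ref{S} and the alphabet convention; I expect the main obstacle to be exactly this careful matching of coordinates and the verification that the suprema of the dominated families never exceed $\sup(M_1 \cup M_2)$, together with checking the boundary/degenerate cases $N = 0$ (using the convention $q'_{-2} = 1$) and the constraint $N \ge 1$ in $M_3$. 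Finally, for the opposite inequality $\sup(M_1 \cup M_2 \cup M_3) \le \CR(\vv)$, Theorem~\ref{Lem_PrevodKritExpoRoteNaSturm} already gives that every element of $A_1 \cup A_2$ exceeding $1$ is realized as $\ind_\vv(v)$ for some factor $v$, so each of $M_1, M_2, M_3$ — being subsets of $A_1 \cup A_2$ by the computations above — is bounded above by $\CR(\vv)$, closing the argument.
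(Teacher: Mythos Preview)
Your overall strategy is exactly the paper's: apply Theorem~\ref{Lem_PrevodKritExpoRoteNaSturm} to reduce to $\sup(A_1\cup A_2)$, list the return words via Proposition~\ref{ParikhRSB}, plug in the indices from Proposition~\ref{Prop_KritExpo}, and split according to stability. The paper organizes the computation by decomposing $A_i=\bigcup_N A_i^{(N)}$ over the ranges $a_0+\cdots+a_N\le n<a_0+\cdots+a_{N+1}$, but the content is the same.

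There is one concrete error you flag yourself but do not resolve: in the paper's convention the Parikh vector is $\vec V(u)=\binom{|u|_0}{|u|_1}$, so stability ($|u|_1$ even) is governed by the \emph{second} coordinate, not the first. For the $G$-case Proposition~\ref{ParikhRSB} gives $\vec V(r)=\binom{p_N}{q_N}$, hence $r$ is stable iff $q_N$ is even, matching the statement; the $D$-case swaps coordinates (Remark~\ref{Rem_GandDexchanged}), giving the $p_N$ condition. Once you fix this, your computations of $\ind(r)+\tfrac1{|r|}$ and $\ind(s)+\tfrac1{|s|}$ are correct and land in $M_1,M_2,M_3$ as claimed.

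There is also a small gap in your reduction to $M_3$. You take the maximum of the $s$-contribution over all $0<m<a_{N+1}$ at $m=1$ and then impose the stability condition (both $q_{N-1},q_N$ odd). But when $q_N$ or $q_{N-1}$ is even, $m=1$ is \emph{not} stable, yet there are still stable values of $m$ (e.g.\ $m=2$ when $q_{N-1}$ is even and $q_N$ odd), and these do contribute elements to $A_1$. You need to argue they are dominated. The paper handles this by observing that every such term is $<3$, while whenever $q_N$ or $q_{N-1}$ is even the set $A_1^{(N)}\cup A_1^{(N-1)}$ already contains an element $\ge 3$ (namely the $r$-contribution $a_{K+1}+2+\tfrac{q'_{K-1}-1}{q'_K}$ for the appropriate $K$), so these stable $s$-terms never affect the supremum. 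The same device disposes of the $N=0$ boundary: $A_1^{(0)}=\{2\}$, which is dominated since $\CR(\vv)>2$. With these two fixes your outline becomes a complete proof identical in substance to the paper's.
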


\begin{proof}
Let us recall that every CS Rote sequence is uniformly recurrent and aperiodic.
In addition, to a CS Rote sequence $\vv$ we can always find a unique standard Sturmian sequence $\uu$ such that $\uu$ has the same language as $\S(\vv)$.
It is important to realize that Theorem \ref{Lem_PrevodKritExpoRoteNaSturm} holds for the pair $\vv$ and $\uu$, too.

First, we assume that $\uu$ has the directive sequence $G^{a_1}D^{a_2}G^{a_3}D^{a_4}\cdots$.
We compute the suprema of the sets $A_1$ and $A_2$ defined in Theorem \ref{Lem_PrevodKritExpoRoteNaSturm}, since by this theorem, $\CR(\vv) = \sup(A_1\cup A_2)$.

Let us decompose $A_  1$ into $A_1 = \bigcup_{N =0}^{\infty} A_1^{(N)}$, where
$$
A_1^{(N)} = \Bigl\{\ind_\uu(u)  + \tfrac{1}{|u|} :\  u  \text{ is a stable return word  to the $n^{th}$ bispecial f. of } \uu, \   \sum_{k=0}^N a_k \leq n  < \sum_{k=0}^{N+1} a_k  \Bigr\}\,.
$$
By definition, a word $u$ is stable if the number of ones occurring in $u$ is even, i.e., the second component of its Parikh vector $\vec{V}(u)$ is even.  Combining   Propositions \ref{ParikhRSB}  and \ref{Prop_KritExpo}, we obtain that $A_1^{(N)} $ contains
\begin{itemize}
\item $a_{N+1} +2 + \frac{q'_{N-1}-1}{q'_N} $  if   $q_N$ is even,
\item $ a_{N} +2 + \frac{q'_{N-2}-1}{q'_{N-1}}$  if  $q_{N-1}$ is even,
\item the subset
$ B_1^{(N)} =\{2 + \frac{q'_{N}-1}{ m\,q'_{N} +q'_{N-1}} : m\,q_{N} +q_{N-1} \text{ even}, \  0< m < a_{N+1}\}$.
\end{itemize}
First we look at $A_1^{(0)} $. Since $q_0 = 1$ is odd, $q_{-1}=0$ is even,  $a_0 =0$, $q'_{0} = q'_{-1}= q'_{-2}= 1$, we get $A_1^{(0)} = \{2\}$.
Since we know that $\CR(\vv) > 2$, we can consider only $N\geq 1$.
Let us note that all elements in $B_1^{(N)}$ are strictly less than $3$.
If $q_N$ or $q_{N-1}$ is even, the set  $A_1^{(N)} $ contains an element $\geq 3$ and the set $B_1^{(N)}$ does not play any role for $\sup
A_1$.
If both $q_N$ and $q_{N-1}$ are odd, there is an element in $B_1^{(N)}$  only for odd $m  < a_{N+1}$, and obviously, $\sup  B_1^{(N)}$ is attained for $m=1$ (if $a_{N+1} = 1$ the set is empty).
Together it gives $\sup A_1 =\sup( M_1\cup M_3)$.

Analogously we define the sets $A_2^{(N)}$ for unstable return words.
Then $A_2^{(N)}$ consists of
\begin{itemize}
\item $\tfrac12\big(a_{N+1} +2 + \frac{q'_{N-1}-1}{q'_N}\bigr)$ if $q_N$ is odd,
\item  $\tfrac12\big( a_{N} +2 + \frac{q'_{N-2}-1}{q'_{N-1}}\bigr) $  if   $q_{N-1}$ is odd,
\item  the subset
$ B_2^{(N)} =\{ \tfrac12\big(2 + \frac{q'_{N}-1}{ m\,q'_{N} +q'_{N-1}} \bigr) : m\,q_{N} +q_{N-1} \text{ odd}, \  0< m < a_{N+1}\}$.
\end{itemize}
We easily compute that $\sup A_2^{(0)} = \tfrac12(a_1+2)$.
All elements in $B_2^{(N)}$ are strictly less than $\tfrac32$.
Thus if $q_N$ or $q_{N-1}$ is odd, the set $B_2^{(N)}$ does not play any role for $\sup A_2$.
If $q_N$ and $q_{N-1}$ are even, then the set $B_2^{(N)}$ is empty.
It means that $\sup A_2 = \sup M_2$.
We can conclude that $\CR(\vv) = \sup(A_1 \cup A_2) =\sup(M_1\cup M_2
 \cup M_3)$.

\medskip

If the directive sequence equals $D^{a_1} G^{a_2} D^{a_3} G^{a_4} \cdots$, only the coordinates of the Parikh vectors of $r$ and $s$ are exchanged (see Remark \ref{Rem_GandDexchanged}).
\end{proof}

\section{CS Rote sequences with a small critical exponent} \label{S_smallexponent}

In this section, we present some corollaries of Theorem \ref{T_KritExpo}.
As we have mentioned, Currie, Mol, and Rampersad proved in \cite{CuMoRa} that there are exactly two  languages of rich binary sequences with the critical exponent less than $\tfrac{14}{5}$.
Both of them are languages of CS Rote sequences.

Let us recall  that the critical exponent depends only on the language of a  sequence and not on the sequence itself.
Therefore, there are infinitely many CS Rote sequences with the critical exponent less than $\tfrac{14}{5}$,  but all of them have one of two languages.
We show that among all languages of  CS Rote sequences only these two languages have the critical exponent less than $3$.
We also describe all  languages of CS Rote sequences with the critical exponent equal to $3$.

\begin{prop}  \label{Prop_MalyExponent}
Let $\vv$ be a CS Rote sequence  associated with  the standard Sturmian sequence $\uu = \S(\vv)$.
If $\emph{cr}(\vv) \leq  3$,  then the directive sequence of $\uu$ is of one of the following forms:
\begin{enumerate}
\item $G^{a_1} (D^{2}G^{2})^\omega$, where  $a_1 = 1$ or $a_1 = 3$; in this case $\emph{cr}(\vv) = 2+ \tfrac{1}{\sqrt{2}}$;
\item $G^{a_1} D^{4}(G^{2}D^2)^\omega$, where  $a_1 = 1$ or $a_1 = 3$; in this case $\emph{cr}(\vv) = 3$;
\item $G^{a_1}D^{1} G^{a_3}(D^{2}G^{2})^\omega$, where $a_1= 2$ or $a_1 = 4$ and $a_3 = 1$ or $a_3 = 3$; in this case $\emph{cr}(\vv) = 3$;
\item $D^{1}G^{a_2} (D^{2}G^{2})^\omega$, where $a_2= 1$ or $a_2 = 3$; in this case $\emph{cr}(\vv) = 3$.
\end{enumerate}
\end{prop}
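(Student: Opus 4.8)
The plan is to exploit Theorem~\ref{T_KritExpo}, which expresses $\CR(\vv)$ as $\sup(M_1\cup M_2\cup M_3)$ in terms of the continued fraction coefficients $a_i$ and the quantities $q'_N$, together with the parities of $p_N$ or $q_N$. First I would reduce to the case where the directive sequence of $\uu$ starts with $G$; the case starting with $D$ is handled by Remark~\ref{Rem_Zamena} and accounts for the extra family in Item~4 (indeed a directive sequence $D^{a_1}G^{a_2}\cdots$ of $\uu$ corresponds to $G^{a_1}D^{a_2}\cdots$ for $E(\uu)$, and the constraint $\CR(\vv)\le 3$ is symmetric). So assume $\uu$ has directive sequence $G^{a_1}D^{a_2}G^{a_3}D^{a_4}\cdots$.

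Next I would extract necessary conditions on the $a_i$ from the requirement $\sup(M_1\cup M_2\cup M_3)\le 3$. The set $M_1$ contributes elements of the form $a_{N+1}+2+\frac{q'_{N-1}-1}{q'_N}$, which already exceeds $3$ whenever $q_N$ is even and $a_{N+1}\ge 2$, or whenever $q_N$ is even, $a_{N+1}=1$, and $q'_{N-1}\ge 2$ (i.e.\ $N\ge 2$ roughly). Hence $q_N$ must be odd for all but very small $N$, and by Lemma~\ref{Lem_Convergents} consecutive $q_N$ cannot both have the same parity pattern forever — so in fact one forces the parities of $(q_N)$ to become eventually periodic of the shape odd, odd, even repeating (this is exactly when $M_3$ can be relevant). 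The set $M_2$ gives $\frac{a_{N+1}+2}{2}+\frac{q'_{N-1}-1}{2q'_N}$, bounded by $3$ forces $a_{N+1}\le 3$ when $q_N$ is odd, and $a_{N+1}\le 4$ in borderline situations; combined with $M_3$, which requires $a_{N+1}>1$ and contributes $2+\frac{q'_N-1}{q'_{N-1}+q'_N}$ (always $<3$ but approaching $3$), I would pin down that from some index on the $a_i$ must come from a tiny finite set and moreover satisfy a rigid parity-compatible pattern. Working backwards through the recurrences $q'_N=a_Nq'_{N-1}+q'_{N-2}$ and $q_N=a_Nq_{N-1}+q_{N-2}$ and tracking parities mod $2$, the only surviving eventual tails are $(D^2G^2)^\omega$; then I would read off from the allowed small prefixes the four listed forms and compute the exact values of $\CR(\vv)$ in each by evaluating the relevant suprema, using that for the periodic tail $(D^2G^2)^\omega$ the quantities $\frac{q'_{N-1}}{q'_N}$ converge to $\sqrt2-1$, giving $\CR=2+\tfrac1{\sqrt2}$ in case~1 and exactly $3$ (attained, not merely approached) in cases 2--4 because of the larger coefficient $a_1$ or $a_3$ or $a_2$ in the prefix producing a single element equal to $3$ in $M_1$ or $M_2$.

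The main obstacle I anticipate is the parity bookkeeping: Theorem~\ref{T_KritExpo} splits the contributions according to whether $q_N$ (or $q_{N-1}$) is odd or even, and the constraints on the $a_i$ feed back into these parities through the recurrence, so the argument is genuinely a simultaneous constraint on $(a_i)$ and the parity sequence of $(q_N)$. I would organize this as a finite case analysis: first show $a_i\in\{1,2,3,4\}$ eventually, then show the parity sequence of $(q_N)$ is eventually $(\text{odd},\text{odd},\text{even})^\omega$ using Lemma~\ref{Lem_Convergents} (which forbids two equal consecutive parity vectors and forbids the zero vector), then deduce $a_i=2$ for large $i$ by checking that any $a_i\ge 3$ in the tail would push an $M_1$- or $M_2$-element above $3$, and finally that $a_i\in\{1,2\}$ together with the parity pattern forces the tail $(D^2G^2)^\omega$ exactly, with only finitely many admissible prefixes, which I enumerate. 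The value computations in each of the four cases are then routine evaluations of the three suprema, double-checking that the purported maximum is actually achieved (cases 2--4) versus only approached (case~1).
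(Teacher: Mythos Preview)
Your plan has two genuine gaps.

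\textbf{First, the symmetry reduction is invalid.} You propose to handle the $D$-start case via Remark~\ref{Rem_Zamena}, arguing that the constraint $\CR(\vv)\le 3$ is symmetric under $\uu\mapsto E(\uu)$. It is not: the CS Rote sequence associated with $E(\uu)$ has a different language from the one associated with $\uu$, and their critical exponents can differ (this is precisely the content of Example~\ref{Ex_Rozdil_cr_u_Eu}). In Theorem~\ref{T_KritExpo} the two cases are governed by the parities of $q_N$ versus the parities of $p_N$, which are not interchangeable. The paper treats the $D$-start case directly: since $p_0=0$ is even, $a_1+2\in M_1$, forcing $a_1=1$ and already $\CR(\vv)\ge 3$; then the parity constraints on $(p_N)$ yield Item~4.

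\textbf{Second, your parity analysis is backwards.} You invoke Lemma~\ref{Lem_Convergents} to conclude that consecutive $q_N$ cannot share a parity and hence the parity sequence of $(q_N)$ is eventually $(\text{odd},\text{odd},\text{even})^\omega$. But Lemma~\ref{Lem_Convergents} is a statement about the vectors $\binom{p_N}{q_N}\bmod 2$, not about $q_N$ alone; having all $q_N$ odd is perfectly compatible with it (the $p_N$ then alternate parity). The correct deduction is the opposite of what you wrote: for $N\ge 2$ one has $\frac{q'_{N-1}-1}{q'_N}>0$, so the $M_1$-candidate $\beta_N=a_{N+1}+2+\frac{q'_{N-1}-1}{q'_N}$ strictly exceeds $3$. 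Therefore $\CR(\vv)\le 3$ forces $q_N$ to be \emph{odd for every} $N\ge 2$. Feeding this into the recurrence $q_N=a_Nq_{N-1}+q_{N-2}$ (with $q_0=1$, $q_1=a_1$) and combining with the bound $a_{N+1}\le 3$ coming from $M_2$ is what produces $a_N=2$ for large $N$ and the short list of admissible prefixes. In the $a_1$-odd subcase $M_1$ is actually empty, and the supremum $2+\tfrac{1}{\sqrt2}$ in Item~1 comes from $M_3$; in the remaining subcases a single early element equal to $3$ lands in $M_1$ and is the supremum.
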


\begin{proof}
For each $N \in \N$ we denote by $ \beta_N =a_{N+1} + 2 + \frac{q'_{N-1}-1}{q'_N}$ the number which is a candidate to join the set $M_1$.
We can easily compute that  $\beta_0 = a_1 + 2$, $\beta_1 = a_2 + 2$ and $\beta_N > 3$ for every $N \geq 2$.
Indeed, it suffices to realize that $q'_{-1} = q'_{0} = 1$, $q'_{1} = a_1 + 1 > 1$ and $(q'_N)_{N \geq 1}$ is an increasing sequence of integers, so $\frac{q'_{N-1}-1}{q'_{N}} \in (0,1)$ for all $N \geq 2$.
Since we look for a sequence $\vv$ with $\CR(\vv) \leq 3$, we have to ensure that $\beta_N \notin M_1$ for all $N \geq 2$ by the parity conditions.
It is also important to notice that $\sup M_3 \leq 3$. Indeed, since $\frac{q'_{N}-1}{q'_{N} + q'_{N-1}} \in \left[0,1\right)$ for all $N \in \N$, all elements of $M_3$ are less than $3$.

\medskip
First we assume that the directive sequence of $\uu$ is $G^{a_1} D^{a_2}G^{a_3}D^{a_4}\cdots$.
By Theorem \ref{T_KritExpo}, if $q_{N}$ is even, then $\beta_N \in M_1$, otherwise $\frac{1}{2}\beta_N \in M_2$.
To ensure $\sup M_1 \leq 3$, $q_N$ has to be odd for all $N \geq 2$.
Moreover, to ensure $\sup M_2 \leq 3$, $\beta_N \leq 6$ and so $a_{N+1} \leq 3$ for all $N \geq 2$.
Since $q_0=1$ is odd, $\frac{1}{2}\beta_0 = \frac{a_1}{2} + 1 \in M_2$ and so $a_1 \leq 4$.
We distinguish two cases.

(i) If $q_1 = a_1$ is odd, then $M_1$ is empty and $\frac{1}{2}\beta_1 = \frac{a_2}{2} +1 \in M_2$.
Thus $a_2 \leq 4$.
The recurrence relation $q_N = a_Nq_{N-1}+q_{N-2}$ with the odd initial conditions $q_0$ and $q_1$ produces $q_N$ odd for all $N \geq 2$ if and only if $a_N$ is even for all $N \geq 2$.
We can summarize that $a_1 \in \{1, 3\}$, $a_2 \in \{2, 4\}$ and $a_N = 2$ for all $N \geq 3$.

Let us observe that if $a_2 = 4$, then $\sup M_2 = 3$.
Since $\sup M_3 \leq 3$ and $M_1$ is empty, we conclude that $\CR (\vv) = \sup M_2 = 3$. It gives us Item 2 of our proposition.

If $a_2 = 2$, then it is easy to check that all elements of $M_2$ are smaller than $\tfrac52$ and thus $\sup M_2 \leq \tfrac52$.
Since $M_1$ is empty, to prove Item 1, it remains to deduce
$$\sup M_3= \sup \left\{2 + \frac{q'_{N}-1}{q'_{N-1} + q'_N} :  N\in \N\right\} = 2 + \frac{1}{\sqrt{2}} > \frac{5}{2}\,.$$
The sequence $(q'_N)_{N \geq 2}$ fulfils the recurrence relation $q'_N = 2q'_{N-1}+ q'_{N-2}$ with the initial conditions $q'_0 =1$ and $q'_1 = a_1+  1$.
This linear recurrence has the solution
\begin{equation}  \label{reseni}
q'_{N} = \tfrac{1}{2\sqrt{2}} \Bigl( ( a_1+\sqrt{2})(1+\sqrt{2})^N - ( a_1-\sqrt{2})(1-\sqrt{2})^N \Bigr)\,.
\end{equation}
Using the solution one can easily check that
${q'_{N-1} + q'_N} = {\sqrt{2}}\,q_N' +    c_N$, where $c_N=(a_1-\sqrt{2})(1-\sqrt{2})^N.$ As $\lim\limits_{N\to \infty} c_N=0$ and  $\lim\limits_{N\to \infty} q'_N=+\infty $,  we have
$$\lim_{n\to \infty} \Bigl(2 + \frac{q'_{N}-1}{q'_{N-1} + q'_N}\Bigr)=\lim_{n\to \infty} \Bigl(2 + \frac{q'_{N}-1}{\sqrt{2} q'_N +c_N}\Bigr)=
2+\frac{1}{\sqrt{2}}\,.$$
Moreover, since $c_N>-1$, all elements of $M_3$ are smaller than the limit $2+\frac{1}{\sqrt{2}}$. Indeed,
$$2 + \frac{q'_{N}-1}{ {\sqrt{2}}\,q_N' + c_N}  < 2 + \frac{q'_{N}-1}{ {\sqrt{2}}\,q_N' -1} < 2 + \frac{1}{\sqrt{2}}\,. $$

%
%

(ii) If $q_1 = a_1$ is even, then $M_1 = \{\beta_1 = a_2 + 2\}$ since all the other $q_N$ are odd.
Thus $a_2 = 1$ and $\CR(\vv) \geq 3$.
The recurrence relation $q_N = a_Nq_{N-1} + q_{N-2}$ with the initial conditions $q_0 = 1$, $q_1 = a_1 \in \{2,4\}$ produces odd $q_N$ for all $N \geq 2$ if and only if $a_3$ is odd and $a_N$ is even for all $N \geq 4$.
Moreover, to ensure $\sup M_2 \leq 3$, we have to take $a_1 \leq 4$ and $a_{N} \leq 3$ for every $N \geq 3$.
Together with the parity conditions we get $a_1 \in \{2,4\}$, $a_2 = 1$, $a_3 \in \{1,3\}$, and $a_N = 2$ for all $N \geq 4$.
Since $\sup M_3 \leq 3$, the set $M_3$ can be omitted.
Together it means that in this case $\CR(\vv) = 3$ and it corresponds to Item 3 of our statement.

\medskip
Now we assume that the directive sequence of $\uu$ is $D^{a_1}G^{a_2} D^{a_3}G^{a_4}\cdots$.
By Theorem \ref{T_KritExpo}, if $p_{N}$ is even, then $\beta_N \in M_1$, otherwise $\frac{\beta_N}{2} \in M_2$.
Since $p_0=0$ is even, $\beta_0 = a_1+2 \in M_1$.
Therefore, $a_1 = 1$ and $ \CR(\vv) \geq 3$.
Since $p_1 = a_1p_0 + p_{-1} = 1$ is odd, $\frac{\beta_1}{2} = \frac{a_2}{2} + 1 \in M_2$ and so $a_2 \leq 4$.
To guarantee $\CR(\vv) \leq 3$, $p_N$ has to be odd and $a_{N+1} \leq 3$ for all $N \geq 2$.
The recurrence relation $p_N = a_Np_{N-1} + p_{N-2}$ with the initial conditions $p_0=0$ and $p_1 = 1$ produces $p_N$ odd for all $N \geq 2$ if and only if $a_2$ is odd and $a_N$ is even for all $N\geq 2$.
Clearly, the set $M_3$ can be omitted.
We may conclude that $\CR(\vv) = 3$ and $a_1 = 1$, $a_2 \in \{1,3\}$, and $a_N = 2$ for all $N \geq 3$, which corresponds to Item 4.

\end{proof}

\begin{rem}
Let us emphasize that all standard Sturmian sequences from Proposition \ref{Prop_MalyExponent}, i.e., which are associated with CS Rote sequences with the critical exponent $\leq 3$, are morphic images of the fixed point of the morphism $D^2G^2$.
If follows directly from the fact that their directive sequences have the periodic suffix $(D^2G^2)^\omega$.
\end{rem}

\begin{exam}\label{Ex_Rozdil_cr_u_Eu}
In Proposition~\ref{Prop_MalyExponent}, we have shown that the CS Rote sequence $\vv$ such that $\S(\vv)$ has the directive sequence $G(D^2 G^2)^{\omega}$ has the critical exponent $\CR(\vv)=2+\frac{1}{\sqrt{2}}$.
Let us determine the critical exponent $\CR(\vv')$ of the CS Rote sequence $\vv'$ associated to the standard Sturmian sequence $\S(\vv')$ obtained by the exchange of letters from $\S(\vv)$, i.e., $\S(\vv')=E(\S(\vv))$.

By Remark \ref{Rem_Zamena}, the directive sequence of $\S(\vv')$ equals $D(G^2D^2)^{\omega}$.
Thus we have $\theta=[0,1,2,2,2,\dots]$ and it is readily seen that $p_N$ is even if and only if $N$ is even.
Let us calculate $\CR(\vv')$ by Theorem~\ref{T_KritExpo}. We have
$$M_1=\{a_{2N+1} + 2 + \frac{q'_{2N-1}-1}{q'_{2N}}: N \in \N\}=\{3\}\cup \{4 + \frac{q'_{2N-1}-1}{q'_{2N}}: N \in \N, N \geq 1\}\,.$$
Using equation \eqref{reseni}, we can check that the sequence $\bigl(\frac{q'_{2N-1}-1}{q'_{2N}}\bigr)$ is increasing and has the limit $\frac{1}{1 + \sqrt{2}}$, and therefore $\sup M_1=4+\frac{1}{1+\sqrt{2}}$.
Since the elements of $M_2$ and $M_3$ are $\leq 3$, we can conclude that $\CR(\vv')=4+\frac{1}{1+\sqrt{2}}$.
\end{exam}

It is well-known that among Sturmian sequences the sequence with the lowest possible critical exponent is the Fibonacci sequence ${\bf f}$, which has $\CR({\bf f})  =  3+ \tfrac{2}{1+\sqrt{5}} \sim 3. 602$.
The following theorem implies that there are uncountably many CS Rote sequences with the critical exponent smaller than $\CR({\bf f})$.

\begin{thm}\label{Thm_Exponent72}
Let $G^{a_1} D^{a_2}G^{a_3}D^{a_4}\cdots $  be the directive sequence of a standard Sturmian sequence  $\uu$ and let $\vv$ be the CS Rote sequence associated with $\uu$.
Then $\emph{cr}(\vv) < \tfrac72$ if and only if the sequence $a_1a_2 a_3 \cdots$  is a concatenation of the blocks from the following list:

$L_0$: \ \ $111$;

$L_1$: \ \  $s1$, where $s \in\{2,4\}$;

$L_2$: \ \ $cs31$, where $ c \in \{1,3\}$  and $s \in\{2,4\}^*$;

$L_3$: \ \ $c{\bf s}$, where $ c \in \{1,3\}$  and ${\bf s} \in \{2,4\}^\mathbb{N}$,

\noindent and if the block $L_0$ appears in $a_1a_2a_3\cdots$, then it is a prefix of $a_1a_2a_3 \cdots$.
\end{thm}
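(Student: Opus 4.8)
The plan is to use Theorem~\ref{T_KritExpo}, which (since the directive sequence of $\uu$ starts with $G$) gives $\CR(\vv)=\sup(M_1\cup M_2\cup M_3)$ in terms of the convergents $\tfrac{p_N}{q_N}$ of $\theta=[0,a_1,a_2,\ldots]$. Every element of $M_3$ is smaller than $3$, so $M_3$ never obstructs $\CR(\vv)<\tfrac72$. Inspecting a generic element $a_{N+1}+2+\frac{q'_{N-1}-1}{q'_N}$ of $M_1$ (present exactly when $q_N$ is even) shows it is $<\tfrac72$ iff $a_{N+1}=1$ and $2q'_{N-1}<q'_N+2$; a generic element $\tfrac{a_{N+1}+2}{2}+\frac{q'_{N-1}-1}{2q'_N}$ of $M_2$ (present exactly when $q_N$ is odd) is $<\tfrac72$ iff $a_{N+1}\le 4$. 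So I would first prove that $\CR(\vv)<\tfrac72$ is equivalent to the two local conditions: \emph{(C1)} whenever $q_N$ is even, $a_{N+1}=1$ and $2q'_{N-1}<q'_N+2$; \emph{(C2)} whenever $q_N$ is odd, $a_{N+1}\in\{1,2,3,4\}$. The implication $\Rightarrow$ is immediate from the two inspections; for $\Leftarrow$, note that (C1) and (C2) force $a_i\le 4$ for all $i$, hence $q'_N\ge q'_{N-1}+q'_{N-2}\ge\tfrac65 q'_{N-1}$, and together with the inequality in (C1) this bounds every element of $M_1\cup M_2$ above by a constant strictly below $\tfrac72$.

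Next I would determine the parity of $q_N$. From $q_{-1}=0$, $q_0=1$, $q_N=a_Nq_{N-1}+q_{N-2}$, the pair $\pi_N:=(q_{N-1}\bmod 2,\ q_N\bmod 2)$ runs through the three states $(0,1),(1,1),(1,0)$ only -- never $(0,0)$, by Lemma~\ref{Lem_Convergents} -- starts at $\pi_0=(0,1)$, and evolves by $(0,1)\xrightarrow{\text{even}}(1,0)$, $(0,1)\xrightarrow{\text{odd}}(1,1)$, $(1,1)\xrightarrow{\text{even}}(1,1)$, $(1,1)\xrightarrow{\text{odd}}(1,0)$, and $(1,0)\to(0,1)$ on any letter; here $q_N$ is even exactly when $\pi_N=(1,0)$. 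Under (C1) and (C2) the admissible transitions become $(0,1)\xrightarrow{s}(1,0)$, $(0,1)\xrightarrow{c}(1,1)$, $(1,1)\xrightarrow{s}(1,1)$, $(1,1)\xrightarrow{c}(1,0)$, $(1,0)\xrightarrow{1}(0,1)$ with $s\in\{2,4\}$, $c\in\{1,3\}$, subject moreover to $2q'_{N-1}<q'_N+2$ at every visit of $(1,0)$.

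The crux is this last inequality. From $2q'_{N-1}-q'_N=(2-a_N)q'_{N-1}-q'_{N-2}$ one sees it holds automatically when $a_N\ge 2$, while for $a_N=1$ it reduces to $q'_{N-1}-q'_{N-2}=(a_{N-1}-1)q'_{N-2}+q'_{N-3}\le 1$, which forces $a_{N-1}=1$ and $q'_{N-3}=1$, i.e.\ $N\le 3$; checking $\pi_N$ then narrows this to $N=2$ with $a_1=a_2=1$. In automaton terms, the state $(1,0)$ can be entered on the letter $1$ only when the position is the middle $1$ of the prefix $a_1a_2a_3=111$. Consequently, decomposing the automaton run (started at $(0,1)$, reading $a_1a_2\cdots$) into its successive returns to $(0,1)$ together with a possible infinite tail that never returns, each return has one of the forms $(0,1)\xrightarrow{s}(1,0)\xrightarrow{1}(0,1)$, reading $s1$ -- block $L_1$; or $(0,1)\xrightarrow{c}(1,1)\xrightarrow{{\mathbf s}}(1,1)\xrightarrow{c'}(1,0)\xrightarrow{1}(0,1)$ with ${\mathbf s}\in\{2,4\}^*$, reading $c{\mathbf s}c'1$, where the magnitude analysis forces $c'=3$ -- block $L_2=c{\mathbf s}31$ -- unless the return sits at the very start, in which case $c=c'=1$ and it reads $111$ -- block $L_0$; and an infinite tail that never returns is $(0,1)\xrightarrow{c}(1,1)\xrightarrow{{\mathbf s}}(1,1)\cdots$ with ${\mathbf s}\in\{2,4\}^{\N}$, block $L_3=c{\mathbf s}$. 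This gives exactly the claimed decomposition, with $L_0$ allowed only as a prefix. Conversely, a sequence assembled from these blocks describes a legal automaton run on which $2q'_{N-1}<q'_N+2$ holds at each visit of $(1,0)$ (the incoming letter is $2$, $4$, $3$, or the prefix $1$ of an $L_0$), so (C1) and (C2) hold, whence $\CR(\vv)<\tfrac72$.

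The step I expect to be the main obstacle is the magnitude inequality in (C1): establishing the equivalence $2q'_{N-1}<q'_N+2\iff q'_{N-1}-q'_{N-2}\le 1$ for $a_N=1$, and then arguing carefully that this boundary case can occur only at the very beginning of the directive sequence -- it is precisely this that creates the special block $L_0=111$ and confines it to being a prefix. The rest is finite-automaton bookkeeping plus routine estimates on the convergents of $\theta$.
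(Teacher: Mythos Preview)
Your approach is essentially the paper's, repackaged through a three-state parity automaton on $(q_{N-1}\bmod 2,\,q_N\bmod 2)$; the paper instead lists eight ad~hoc parity observations and then case-splits on successive indices $J$ with $q_J$ even, but the content is the same. Your returns-to-$(0,1)$ decomposition is exactly the paper's block-by-block argument, and your analysis of the boundary case $a_N=1$ at an even-$q_N$ position is the paper's Item~7.

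One step is compressed too far: the claim that (C1), (C2) and $q'_N\ge\tfrac65 q'_{N-1}$ alone bound every element of $M_1$ by a constant strictly below $\tfrac72$. From (C1) you only get $\frac{q'_{N-1}-1}{q'_N}<\tfrac12$, which does not keep the supremum away from $\tfrac72$; and $q'_N\ge\tfrac65 q'_{N-1}$ gives $\frac{q'_{N-1}}{q'_N}\le\tfrac56$, which is useless here. The uniform bound actually needs the further consequence of (C1) that you derive afterwards: whenever $q_N$ is even one has $a_N\ge 2$ except in the single prefix case $N=2$, $a_1=a_2=1$. With $a_N\ge 2$ one gets $q'_N\ge 2q'_{N-1}+q'_{N-2}$ and hence $\frac{q'_{N-1}-1}{q'_N}\le\frac{1}{2+q'_{N-2}/q'_{N-1}}\le\frac{5}{11}$ (this is the paper's final estimate), while the exceptional $N=2$ is a single term. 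So all ingredients are present; just move your ``crux'' paragraph before the $\Leftarrow$ claim, or weaken the intermediate equivalence to ``each element of $M_1\cup M_2$ is $<\tfrac72$'' and defer the supremum bound until after the block description.
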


\begin{proof}
As in the proof of Proposition \ref{Prop_MalyExponent}, we again use the sets $M_1$ and $M_2$ from Theorem \ref{T_KritExpo}.
The set $M_3$ can be omitted since $\sup M_3\leq 3$.

Let us recall that $q_0 =1$, $q_1 = a_1$, and if $q_N$ is even, then $\beta_N = a_{N+1} + 2 + \frac{q'_{N-1}-1}{q'_N} \in M_1$, otherwise $\frac{1}{2}\beta_N \in M_2$.
First we suppose that $\CR(\vv) < \tfrac72$ and we deduce several auxiliary observations for each $N \in \N$:

\begin{enumerate}
\item If $q_N$ even, then $a_{N+1} = 1$.   \quad

Proof: It follows from the  inequality  $ \beta_N  < \tfrac72$.

\item If $q_N$ odd, then $a_{N+1} \in \{1,2,3,4\}$.   \quad

Proof: It follows from the  inequality  $ \tfrac12\beta_N  < \tfrac72$.

\item If $q_{N-1}$ odd and $q_{N}$ even, then $q_{N+1}$ odd.  \quad

Proof: It follows from Item 1 and the relation $q_{N+1} = a_{N+1}q_{N}+q_{N-1} = q_N + q_{N-1}$.

\item If $q_{N}$ even, $q_{N+1}$ odd, and $q_{N+2}$ even, then $a_{N+2} \in \{2,4\}$.  \quad

  Proof: It follows from Item 2 and the relation $q_{N+2} = a_{N+2}q_{N+1}+q_{N}$.

\item If $q_{N}$ even, $q_{N+1}$ odd, and $q_{N+2}$ odd, then $a_{N+2} \in \{1,3\}$.  \quad

 Proof: It follows from Item 2 and the relation $q_{N+2} = a_{N+2}q_{N+1}+q_{N}$.

\item If $q_{N}$ odd, $q_{N+1}$ odd, and $q_{N+2}$ odd, then $a_{N+2} \in \{2,4\}$.  \quad

Proof: It follows from Item 2 and the relation $q_{N+2} = a_{N+2}q_{N+1}+q_{N}$.

\item If $q_{N}$ odd, $q_{N+1}$ odd, and $q_{N+2}$ even, then $a_{N+2}=\{1,3\}$. Moreover, if $a_{N +2}  = 1$, then $N = 0$ and $a_1 = 1$. \quad

Proof: Item 2 and the relation $q_{N+2} = a_{N+2}q_{N+1}+q_{N}$ imply $a_{N+2} \in \{1,3\}$.
Assume that $a_{N+2} = 1$.
By Item 1, $a_{N+3} = 1$.
As $q_{N+2}$  is even,  $\beta_{N+2} \in M_1$ and so $\beta_{N+2} = 3 +  \frac{q'_{N+1}-1}{q'_{N+2}} < \tfrac{7}{2}$.
By some simple rearrangements and applications of the recurrence relation, we can rewrite this inequality equivalently as $(a_{N+1} -1)q'_{N} + q'_{N-1} < 2$.
It is easy to verify that this inequality holds only for $N = 0$ and $a_1 = 1$ or $N = 1$ and $a_2 = 1$.
Nevertheless, the second case leads to a contradiction with the assumption that both $q_1,  q_2 = a_2q_1 +1$ are odd.

\item  Let  $M>  N+2$.   If  $q_{N}$ even,   $q_{M}$ even, and $q_{K}$ odd for all $K, N<K <M$, then $a_{N+2} \in \{1,3\}$, $a_M = 3$ and   $a_{K} \in \{2, 4\}$ for all $K,  N+2 <K < M$.

Proof:  Item 6 implies $a_{K} \in \{2, 4\}$ for all $K$, $N+2 <K < M$. By Item 5, $a_{N+2} \in \{1,3\}$ and by Item 7, $a_M =3$.
\end{enumerate}
\medskip

Using the previous claims we show that for each $J$ for which $q_J$ is even, at the position $J+1$ ends one of the blocks $L_0$, $L_1$, or $L_2$.
Moreover, the block $L_0$ can only occur as a prefix of the sequence $a_1a_2 a_3 \cdots$, while each of the blocks $L_1$ and $L_2$ is either a prefix of $a_1a_2 a_3 \cdots$ or it starts at the position $I+2$, where $I$ is the greatest integer smaller than $J$ for which $q_I$ is even.
We discuss three cases:

\begin{itemize}
\item Let $q_1$ be even.
Then Item 1 implies $a_2 = 1$.
And since $a_1 = q_1$ is even and $q_0 = 1$ is odd, by Item 2, we get $a_1\in \{ 2, 4\}$.
Thus the prefix $a_1a_2$ of the sequence $a_1a_2a_3 \cdots$ is of the form $L_1$ from our list.

\item Let $J>1$ be the first index such that $q_J$ is even.
As $q_1 = a_1$, $a_1$ is odd, and by Item 2, we get $a_1 \in \{1,3\}$.
Item 6 implies $a_K \in \{ 2,4\}$ for all $K, 1<K<J$.
By Item 7, $a_J = 3$ or $a_J = 1$.
But if $a_J = 1$, then $J = 2$ and $a_1 = 1$.
Finally, Item 1 implies $a_{J+1} = 1$.
Thus the prefix  $a_1a_2\cdots  a_Ja_{J+1}$ of the sequence $a_1a_2a_3 \cdots$ is of the form $L_0$ or $L_2$ from our list.

\item Let $I$ be an index such that $q_I$ is even and let $J$ be the smallest index greater than $I$ for which $q_J$ is even.
By Item 1, $a_{I+1} = 1$.
The word $a_{I+2} \cdots a_Ja_{J+1}$ is by Item 4 or Item 8 either of the form $L_1$  or $L_2$.
\end{itemize}

If there are infinitely many even denominators $q_N$, then we have shown that the sequence $a_1a_2a_3 \cdots$ is concatenated from the blocks $L_0$, $L_1$, and $L_2$ ($L_0$ can only be a prefix).
It remains to consider the case when only finitely many denominators $q_N$ are even.

\begin{itemize}
\item Let $q_N$ be odd for every $N \in \N$.
Then $q_1 = a_1$ is odd.
Especially, since both $q_0$ and $q_1 = a_1$ are odd, Item 2 implies $a_ 1 \in\{1, 3\}$ and it follows from Item 6 that $a_2a_3a_4\cdots \in \{2,4\}^\mathbb{N}$.
Therefore, the sequence $a_1a_2a_3 \cdots$ is equal to the block $L_3$ from our list.

\item Let $L \geq 1$ be the last index such that $q_L$ is even.
In particular, it means that $q_N$ is even only for a finite number of indices.
Item 1 implies that $a_{L+1} = 1$.
By Items 5 and 6, the suffix $a_{L+2}a_{L+3}a_{L+4}\cdots$ of the sequence $a_1a_2a_3 \cdots$ equals to $L_3$.
\end{itemize}

\medskip

Now we have to show that any directive sequence $G^{a_1} D^{a_2}G^{a_3}D^{a_4}\cdots$ such that $a_1a_2a_3 \cdots$ is concatenated of the blocks from the list gives a standard Sturmian sequence $\uu$ such that the CS Rote sequence $\vv$ associated to $\uu$ has the critical exponent less than $\tfrac72$.

If $q_N$ is odd, then $\tfrac12 \beta_N  = \tfrac12 a_{N+1} + 1+\tfrac{q'_{N-1} -1}{2q'_{N}}  < 3 + \tfrac{q'_{N-1}}{2q'_{N-1} + q'_{N-2}}< \tfrac72$, as each $a_{N+1}$ is $\leq 4$.

If $q_N$ is even, it is easy to prove by induction on $N$ that there is a block of the form $L_0$, $L_1$, or $L_2$ ending at the position $N+1$ (and $L_0$ only for $N = 2$).
In particular, it means that $a_{N+1}=1$ and $a_N \geq 2$ or $a_1 = a_2 = a_3 = 1$.
In the first case, we get $ \beta_N  \leq   3+ \tfrac{q'_{N-1}}{2q'_{N-1} + q'_{N-2}}< \tfrac72$, while in the second case, we get $\beta_2 = 3 + \frac{a_1 + 1 - 1}{a_2(a_1 + 1) + 1} = 3 + \frac{1}{3} < \tfrac72$.

To show that $\sup (M_1\cup M_2)< \tfrac72$, we need to show that $\sup \tfrac{q'_{N-1}}{2q'_{N-1} + q'_{N-2}} < \tfrac12$.  As $a_{N} \leq 4$ for all  $N$, we can estimate
 $\tfrac{q'_{N-2}}{q'_{N-1}} \geq \tfrac{q'_{N-2}}{4q'_{N-2} +q'_{N-2}  }  = \tfrac15$ and thus $ \tfrac{q'_{N-1}}{2q'_{N-1} + q'_{N-2}}  = \tfrac{1}{2 +\tfrac{q'_{N-2}}{q'_{N-1}}  } \leq \tfrac5{11}$.
\end{proof}

\begin{rem}
It would be interesting to reveal some topological properties of the set $$\CR_{Rote} := \{\CR(\vv) : \vv  \text { is a CS Rote  sequence}\},$$ for instance, to find its accumulation points in the interval $(3, \tfrac72)$.
The proof of the previous theorem implies that there is no CS Rote sequence with the critical exponent between $3+\tfrac5{11}$ and $ 3+\tfrac12$.
We even believe that for any CS Rote sequence $\vv$, the following implication holds: If $\CR(\vv)<3+\frac{1}{2}$, then $\CR(\vv)<3+\frac{1}{1+\sqrt{3}}$.
\end{rem}

\section{The recurrence function of CS Rote sequences} \label{S_recurrencefunction}
The main result of this section is Theorem~\ref{Thm_RecFunctionRoteDetailed}, where we describe the recurrence function of any CS Rote sequence in terms of the convergents related to the associated Sturmian sequence.
To obtain this result, we proceed similarly as in the previous parts concerning the critical exponent, i.e., we transform our task of finding the recurrence function of a CS Rote sequence into studying some properties of its associated Sturmian sequence.
Let us emphasize that we may still restrict our consideration to CS Rote sequences associated with standard Sturmian sequences without loss of generality.

\begin{defi}
Let $\uu$ be a uniformly recurrent sequence. The mapping $R_\uu: \N \to \N$ defined by
$$R_\uu(n)=\min \{N \in \N : \text{each factor of $\uu$ of length $N$ contains all factors  of $\uu$ of length $n$}\}$$
is called the \emph{recurrence function} of $\uu$.
\end{defi}

The definition of the recurrence function may be reformulated in terms of return words \cite{CaCh}.

\begin{obs}\label{Obs_RnRetWords}
Let $\uu$ be a uniformly recurrent sequence. Then
$$R_\uu(n)=\max \{|r|\in \N : \text{$r$ is a return word to a factor of $\uu$ of length $n$}\}+n-1.$$
\end{obs}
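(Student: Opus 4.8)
The plan is to prove the two inequalities $R_\uu(n)\ge M(n)+n-1$ and $R_\uu(n)\le M(n)+n-1$ separately, where I write $M(n)$ for the maximum appearing in the statement, i.e. the largest length of a return word to a factor of $\uu$ of length $n$. This maximum is indeed attained: over a finite alphabet $\uu$ has only finitely many factors of length $n$, and by uniform recurrence each of them has only finitely many return words.

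For the lower bound I would fix a factor $w$ of length $n$ together with two \emph{consecutive} occurrences $i<j$ of $w$ in $\uu$ realising a longest return word, so $j-i=M(n)$ (such a pair exists in $\uu$ itself by recurrence). Then the factor $y:=u_{i+1}u_{i+2}\cdots u_{j+n-2}$ of $\uu$ has length $j-i+n-2=M(n)+n-2$ and contains no occurrence of $w$: an occurrence of $w$ inside $y$ would have to start at some position $p$ with $i+1\le p\le j-1$, but the occurrences of $w$ at positions in $[i,j]$ are only $i$ and $j$ because $i,j$ are consecutive. Hence some factor of length $M(n)+n-2$ does not contain all factors of length $n$, so $R_\uu(n)>M(n)+n-2$, that is $R_\uu(n)\ge M(n)+n-1$.

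For the upper bound, let $z$ be an arbitrary factor of $\uu$ of length $M(n)+n-1$ and $w$ an arbitrary factor of length $n$; I must show $w\in\mathcal L(z)$. The point is to use recurrence to avoid the ``$z$ is a prefix of $\uu$'' difficulty: since $z$ is a factor, it has infinitely many occurrences, so I may pick an occurrence of $z$ starting at a position $k$ that is at least the first-occurrence position of $w$. Listing the occurrences of $w$ in $\uu$, consecutive ones are at distance at most $M(n)$ (those distances are exactly the lengths of return words to $w$), so the smallest occurrence position of $w$ that is $\ge k$ is at most $k+M(n)-1$ (it is either $k$, or within $M(n)$ of the previous occurrence, which lies below $k$). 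That occurrence of $w$ ends at a position $\le k+M(n)+n-2$, hence lies entirely inside the window occupied by $z$, so $w$ is a factor of $z$. Thus every factor of length $M(n)+n-1$ contains all factors of length $n$, giving $R_\uu(n)\le M(n)+n-1$; together with the lower bound this proves the claim.

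The only genuinely delicate step is the shift used in the last paragraph. A naïve attempt to bound $R_\uu(n)$ by squeezing an arbitrary window between two bracketing occurrences of $w$ breaks down for windows that sit at the very beginning of $\uu$, where there is no occurrence of $w$ to the left and the first occurrence of $w$ is not a priori controlled by $M(n)$; moving the window to a sufficiently late copy of itself (which exists by recurrence) is exactly what repairs this. Everything else is elementary length bookkeeping.
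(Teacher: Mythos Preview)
Your proof is correct. The paper does not actually prove this observation; it is stated without proof, with a pointer to Cassaigne and Chekhova \cite{CaCh} for this reformulation of the recurrence function. Your two-inequality argument is the standard elementary one: the lower bound via a window of length $M(n)+n-2$ sandwiched strictly between two consecutive occurrences of a factor realising the maximal gap, and the upper bound by shifting an arbitrary window to a late enough copy (using recurrence) so that an occurrence of the target factor is guaranteed to the left of its start. Both steps are carried out cleanly, and your remark about the ``prefix window'' pitfall and its fix by recurrence is exactly the point that makes the upper bound go through.
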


Moreover, to determine $R_\uu(n)$ we can restrict our consideration to return words to bispecial factors of~$\uu$.

\begin{lem}\label{Lem_BN}
Let $\uu$ be a uniformly recurrent aperiodic sequence. For $n \in \N$, we denote
$$\mathcal{B}_{\uu}(n)= \{ b\in \mathcal{L}(\uu) : \exists w \in \mathcal{L}(\uu), |w| = n, \   \text{such that $b$ is the shortest bispecial factor containing $w$} \}\,.
$$
Then $$R_\uu(n)=\max \{|r|: \text{$r$ is a return word to  $b \in \mathcal{B}_{\uu}(n)$}\}+n-1.$$
\end{lem}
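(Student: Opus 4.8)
The plan is to deduce this lemma from Observation~\ref{Obs_RnRetWords} by showing that the maximum over return words to \emph{all} factors of length $n$ is attained (or at least matched) by a return word to some bispecial factor from $\mathcal{B}_{\uu}(n)$. The key idea is a standard monotonicity principle for return words: if $w$ is a factor and $w'$ is a factor containing $w$ (i.e. $w$ occurs in $w'$ at some fixed position), then every return word to $w'$ is, up to a bounded shift, a return word to $w$, and in particular $\max\{|r| : r \in \mathcal{R}_\uu(w')\} \le \max\{|r| : r \in \mathcal{R}_\uu(w)\}$. More precisely, consecutive occurrences of $w'$ induce consecutive occurrences of $w$ (the occurrence of $w$ inside the copy of $w'$), so a return word to $w'$ decomposes as a concatenation of return words to $w$; hence the longest return word to $w'$ is at most the longest return word to $w$. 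Conversely, shrinking $w$ can only keep or increase the longest return word. This is exactly the content that lets us replace an arbitrary factor $w$ of length $n$ by the shortest bispecial factor $b$ containing $w$.

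First I would fix $n$ and pick a factor $w_0$ of length $n$ whose longest return word realizes $R_\uu(n) - n + 1$; such $w_0$ exists because $\uu$ is uniformly recurrent (finitely many factors of each length, each with finitely many return words). Then I would argue that we may assume $w_0$ is \emph{not} right special and not left special after possibly extending it: if $w_0 w$ fails to be right special, the single letter following every occurrence of $w_0$ is determined, so $w_0$ and $w_0 a$ have essentially the same set of occurrences and the same return words; iterating this extension to the left and right, we reach a factor $w$ containing $w_0$ that is either bispecial or whose two-sided extension stabilizes — but in a uniformly recurrent aperiodic sequence the extension process must eventually hit a bispecial factor (otherwise $w_0$ would extend uniquely to an eventually periodic sequence, contradicting aperiodicity; more carefully, the bispecial factors containing $w_0$ are cofinal, since every sufficiently long factor containing $w_0$ is either bispecial or extends uniquely). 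This yields a bispecial factor $b \supseteq w_0$ with the \emph{same} longest return word length as $w_0$. Choosing $b$ to be the shortest bispecial factor containing $w_0$ puts $b \in \mathcal{B}_\uu(n)$ (witnessed by $w = w_0$), giving $R_\uu(n) - n + 1 \le \max\{|r| : r$ is a return word to $b\}$ for this $b$, hence $\le$ the right-hand side.

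For the reverse inequality, take any $b \in \mathcal{B}_\uu(n)$ with witness $w$ of length $n$, so $w$ is a factor of $b$ and thus every occurrence of $b$ carries an occurrence of $w$; by the monotonicity principle above, $\max\{|r| : r \in \mathcal{R}_\uu(b)\} \le \max\{|r| : r \in \mathcal{R}_\uu(w)\} \le R_\uu(n) - n + 1$ by Observation~\ref{Obs_RnRetWords}. Taking the maximum over $b \in \mathcal{B}_\uu(n)$ gives the other direction, and the two inequalities combine to the claimed identity. The main obstacle I anticipate is the first direction — justifying cleanly that the uniqueness-of-extension process starting from an arbitrary factor terminates at a genuine bispecial factor without changing the longest return word; this needs the aperiodicity and uniform recurrence hypotheses and a short argument that the set of bispecial factors containing a given factor is nonempty (indeed infinite, ordered by length, as already noted for Sturmian sequences but true in general for aperiodic uniformly recurrent sequences). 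Everything else is a direct application of Observation~\ref{Obs_RnRetWords} and the elementary fact that passing to a sub-factor does not decrease the longest return word.
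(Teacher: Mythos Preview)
Your overall strategy --- replace each factor $w$ of length $n$ by the shortest bispecial factor containing it via unique one-letter extensions --- is exactly the paper's approach, and your Direction~1 is correct and essentially identical to the paper's argument. However, your ``monotonicity principle'' is stated with the wrong inequality. If $w$ is a factor of $w'$, then every occurrence of $w'$ yields an occurrence of $w$ but not conversely, so a return word to $w'$ is (up to conjugation) a \emph{concatenation} of one or more return words to $w$; this gives $\max\{|r| : r \in \mathcal{R}_\uu(w')\} \ge \max\{|r| : r \in \mathcal{R}_\uu(w)\}$, not $\le$. For instance, in the Fibonacci sequence, $w=a$ has longest return word of length $2$, while $w'=aa$ has longest return word of length $5$. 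Your ``reverse inequality'' paragraph therefore does not go through as written.

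The fix is already implicit in your Direction~1: when $b$ is the \emph{shortest} bispecial factor containing $w$, the unique-extension argument shows that the multisets of return-word lengths to $w$ and to $b$ \emph{coincide} (right unique extension preserves the return words themselves; left unique extension replaces each return word $r$ by its conjugate $yry^{-1}$ of the same length). This equality gives both inequalities at once, with no need for a separate monotonicity step. This is precisely the paper's proof, and it also dissolves your ``main obstacle'': termination of the extension process follows directly from aperiodicity (an infinite chain of unique right extensions would force eventual periodicity), so no additional argument is needed there either.
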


\begin{proof}
For evaluation of  $R_\uu(n)$  we use Observation \ref{Obs_RnRetWords}.  Let $w\in  \mathcal{L}(\uu)$ and $ |w| = n $.

If $w$ is not right special, then there exists a unique letter $x$ such that  $wx \in \mathcal{L}(\uu)$.
Obviously, the occurrences of $w$ and $wx$ in $\uu$ coincide. Therefore, return words to $w$ and $wx$ coincide as well.

If $y$ is not left special, then there is a unique letter $y$ such that $yw\in \mathcal{L}(\uu)$. If $r$ is a return word to $w$, then the word $yry^{-1}$ is a return word to $yw$ and the return words $r$ and $yry^{-1}$ are of the same length.

These two facts imply that the lengths of return words to $w$ equal the lengths of return words to the shortest bispecial factor containing $w$.
\end{proof}

The following lemma shows that for a CS Rote sequence $\vv$ associated with the Sturmian sequence $\uu$ the sets $\mathcal{B}_\vv(n+1)$ and $\mathcal{B}_\uu(n)$ correspond naturally for every $n \in \N$. Thus to determine the set $\mathcal{B}_\vv(n+1)$, we first describe the set $\mathcal{B}_\uu(n)$ for a standard Sturmian sequence $\uu$.

\begin{lem}\label{Lem_ShortestBSinRoteSturm}
Let $w$ be a~factor of length $n+1$ in a CS Rote sequence $\vv$ and let $v$ be the shortest bispecial factor of $\vv$ containing $w$. Then the factor $\S(v)$ is the shortest bispecial factor of the associated Sturmian sequence $\S(\vv)$ such that $\S(v)$ contains $\S(w)$.
\end{lem}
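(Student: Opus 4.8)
The plan is to translate the bispecial-membership statement in $\vv$ into one in $\uu=\S(\vv)$ using the correspondence between right/left special factors established in Lemma~\ref{lem_Bispecial}, together with the length-preserving bookkeeping of Lemma~\ref{pomocny}. Recall that a CS Rote sequence is aperiodic and uniformly recurrent, and its language is closed under $E$, so both $\vv$ and $\uu$ have exactly one right special and one left special factor of each length; hence the ``shortest bispecial factor containing $w$'' is well defined, and a factor is bispecial precisely when it is simultaneously the (unique) left special and the (unique) right special factor of its length. I would first fix $w\in\mathcal{L}(\vv)$ with $|w|=n+1$, let $v$ be the shortest bispecial factor of $\vv$ containing $w$, and, passing to $E(w)$ if necessary (which changes neither the property of being special nor the relevant lengths), assume $w$ — and hence $v$ — starts with $0$, so that $\S(v0)$ is the natural stable companion; I would keep track of the fact $|\S(v)|=|v|-1$ and, more importantly, that $\S$ ``loses one letter'' uniformly, so that a factor $w$ of $v$ sitting at position $i$ maps to a factor $\S(w')$ of $\S(v)$ where $w'$ is $w$ extended by one letter or $w$ itself depending on position — the cleanest route is to use part (iv) of Lemma~\ref{pomocny} and rule \eqref{jasne} to see that $\S(w)\in\mathcal{L}(\S(v))$ whenever $w\in\mathcal{L}(v)$ with $|\S(w)|=|w|-1$.

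The core of the argument splits into two inclusions. First, I would show $\S(v)$ is bispecial in $\uu$ and contains $\S(w)$: bispeciality of $\S(v)$ is immediate from Lemma~\ref{lem_Bispecial} (both directions, since $\mathcal{L}(\vv)$ is closed under $E$), and $\S(w)\in\mathcal{L}(\S(v))$ follows from rule \eqref{jasne} applied to the factorisation of $v$ through an occurrence of $w$ — concretely, if $v=xwy$ then $\S(v)=\S(xc)\,\S(w)\,\S(c'y)$ where $c,c'$ are the appropriate bordering letters, so $\S(w)$ is an inner factor of $\S(v)$. Second, and this is the step requiring care, I would show $\S(v)$ is the \emph{shortest} such bispecial factor. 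Suppose $b'$ is a bispecial factor of $\uu$ with $|b'|<|\S(v)|$ and $\S(w)\in\mathcal{L}(b')$. Using part (ii) of Lemma~\ref{pomocny}, write $b'=\S(v')$ for the unique $v'$ with prefix $0$; then by Lemma~\ref{lem_Bispecial} (the $(\Longleftarrow)$ direction) $v'$ is bispecial in $\vv$, and $|v'|=|b'|+1<|\S(v)|+1=|v|$. It remains to deduce that $v'$ contains $w$ (or a translate that still forces a contradiction): from $\S(w)\in\mathcal{L}(\S(v'))$ and part (iv) of Lemma~\ref{pomocny}, together with the fact that $v'$, being bispecial, is a palindrome-prefix-type factor — actually the relevant fact is that any occurrence of $\S(w)$ inside $\S(v')$ lifts, via the unique $\S$-preimage with a prescribed starting letter, to an occurrence of $w$ or $E(w)$ inside $v'$; since $\mathcal{L}(\vv)$ is closed under $E$ and $v'$ is bispecial, $v'$ contains whichever of $w,E(w)$ is needed, contradicting the minimality of $v$.

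The main obstacle I anticipate is precisely this lifting: $\S$ is two-to-one and shortens by one, so ``$\S(w)$ occurs in $\S(v')$'' does not instantly give ``$w$ occurs in $v'$'' — one must choose the correct one-letter extension and the correct preimage, and handle the two boundary letters. The safe way to organise it is to never work with $v$ and $v'$ directly but with $v0$ and $v'0$ (their unique $\S$-preimages starting and ending in $0$, guaranteed stable by part (ii) of Lemma~\ref{pomocny}), prove all the containment and length relations at that level where $\S$ is an honest bijection onto stable factors of the right length, and only at the end divide by the harmless extra $0$. I would also invoke the earlier established dictionary (stated just before Example~\ref{Fib}) that factors of $\vv$ and $\uu$ correspond up to the pair $\{v,E(v)\}$, which makes the closure-under-$E$ steps routine rather than delicate. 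With that scaffolding in place the proof is a short chain of applications of Lemmas~\ref{lem_Bispecial} and \ref{pomocny} and rule \eqref{jasne}, with the minimality argument by contradiction as the only genuinely substantive point.
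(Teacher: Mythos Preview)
Your approach is essentially the paper's own: both rest entirely on the bispecial correspondence of Lemma~\ref{lem_Bispecial}, and the paper's proof is literally one sentence invoking that lemma. Two minor corrections are worth making, though neither breaks the argument. First, a CS Rote sequence has exactly \emph{two} right special and two left special factors of each positive length (a factor and its $E$-image), not one as you state; relatedly, ``$w$ starts with $0$'' does not force ``$v$ starts with $0$'', since $w$ need not be a prefix of $v$. Second, the lifting step you flag as the main obstacle is in fact immediate and requires none of the $v0$/$v'0$ scaffolding: if $\S(w)$ occurs at position $k$ in $\S(v')$, then the length-$|w|$ substring $v'_k\cdots v'_{k+|w|-1}$ of $v'$ has $\S$-image exactly $\S(w)$, hence equals $w$ or $E(w)$; in the latter case $E(v')$ is a bispecial factor of $\vv$ of the same length containing $w$, contradicting the minimality of $v$.
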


\begin{proof}
The statement is a consequence of the simple fact that $\S(v)$ is a bispecial factor of $\S(\vv)$ if and only if $v$ and $E(v)$  are  bispecial factors of $\vv$. (See Lemma~\ref{lem_Bispecial}.)
\end{proof}

In the sequel, we will essentially use a characterization of Sturmian sequences by palindromes from~\cite{DrPi}.
Let us first recall some basic notions. Consider an alphabet $\A$. The assignment $w=w_0w_1\cdots w_{n-1} \to \overline{w} =
w_{n-1}w_{n-2}\cdots w_{0}$ is called a~\textit{mirror mapping}, and
the word $\overline{w}$ is called the \textit{reversal} or the \textit{mirror image} of $w \in \A^*$.
A~word $w$ which coincides with its mirror image $\overline{w}$ is a~\textit{palindrome}.
If $p$ is a palindrome of odd length, then the \textit{center} of $p$ is a letter $a$ such that $p = sa \overline{s}$ for some $s \in \A^*$. The center of a palindrome $p$ of even length is the empty word $\varepsilon$.

\begin{thm}[\cite{DrPi}]\label{Thm_DrPi}
A sequence $\uu$ is Sturmian if and only if $\uu$ contains one palindrome of every even length and two palindromes of every odd length.
\end{thm}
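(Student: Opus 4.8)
The plan is to prove the two implications separately; the common device will be the \emph{palindromic complexity} $\mathcal P_\uu(n)=\#\{w\in\mathcal L(\uu):|w|=n,\ w=\overline w\}$ together with one elementary observation, \emph{coring}: if $p\in\mathcal L(\uu)$ is a palindrome with $|p|\ge 2$, then its first and last letters coincide and deleting them leaves a palindrome of length $|p|-2$ in $\mathcal L(\uu)$. Hence, separately on the even and on the odd lengths, the set of lengths of palindromic factors of $\uu$ is an initial segment of $2\N$ or of $2\N+1$; in particular, if $\uu$ has palindromic factors of arbitrarily large even length and of arbitrarily large odd length, then $\mathcal P_\uu(n)\ge 1$ for every $n$. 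I will also use the classical bound $\mathcal P_\uu(n)+\mathcal P_\uu(n+1)\le \mathcal C_\uu(n+1)-\mathcal C_\uu(n)+2$, valid for a recurrent sequence, and, over a binary alphabet, the equality $\mathcal C_\uu(n+1)-\mathcal C_\uu(n)=\#\{\text{right special factors of }\uu\text{ of length }n\}$.

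For the implication ``$\uu$ Sturmian $\Rightarrow$ the palindrome profile'' I would pass to the standard Sturmian sequence $\uu_0$ with $\mathcal L(\uu_0)=\mathcal L(\uu)$ (permissible, since both complexities depend only on the language). By the facts recalled in Section~\ref{S_retwordsBS_Sturmian}, the bispecial factors of $\uu_0$ are exactly its palindromic prefixes, there are infinitely many of them, and by Proposition~\ref{ParikhRSB} the $n^{th}$ one has length $(m+1)q'_N+q'_{N-1}-2$; using Lemma~\ref{Lem_Convergents} (the pairs $(p_N,q_N)\bmod 2$, hence $q'_N=p_N+q_N\bmod 2$, never stagnate) one checks that these lengths realise arbitrarily large values of both parities, so by coring $\mathcal P_\uu(n)\ge 1$ for all $n$, with $\mathcal P_\uu(0)=1$ and $\mathcal P_\uu(1)=2$. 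Also $\mathcal L(\uu)$ is closed under reversal, since every factor lies inside a long enough palindromic prefix of $\uu_0$. It remains to upgrade $\mathcal P_\uu(n)\ge 1$ to $\mathcal P_\uu(n)=2$ for odd $n$: the fastest route is to invoke that Sturmian sequences are rich in palindromes (see \cite{DrJuPi}), which turns the displayed bound into the equality $\mathcal P_\uu(n)+\mathcal P_\uu(n+1)=\mathcal C_\uu(n+1)-\mathcal C_\uu(n)+2=3$; combined with $\mathcal P_\uu(0)=1$ and $\mathcal P_\uu(n)\ge 1$ this forces, by an immediate induction on $n$, $\mathcal P_\uu(n)=1$ for even $n$ and $\mathcal P_\uu(n)=2$ for odd $n$. (Alternatively one produces the second odd palindrome by hand, from the unique bilateral extension of the form $aba$ of a bispecial factor $b$ of large odd length; but checking that this palindrome differs from the one obtained by coring a longer bispecial prefix requires some care.)

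For the converse, assume $\uu$ has exactly one palindrome of each even length and exactly two of each odd length, so $\mathcal P_\uu(n)+\mathcal P_\uu(n+1)=3$ for all $n$, $\uu$ has arbitrarily long palindromes, and the alphabet is $\{0,1\}$. I would first check that the profile forces $\uu$ to be uniformly recurrent and aperiodic: a recurrent eventually periodic sequence is purely periodic and then has $\mathcal C_\uu(n+1)-\mathcal C_\uu(n)=0$ for large $n$, contradicting the bound; uniform recurrence is extracted from the structure below. Then I would make that structure explicit: the unique even palindrome $P_{2k}$ satisfies $P_{2k}=a\,P_{2k-2}\,a$ for a single letter $a$ (its core is the unique palindrome of length $2k-2$), and the two odd palindromes of length $2k+1$ are obtained from those of length $2k-1$ by prepending and appending a letter; this rigid nesting, together with the uniqueness at each even length, pins down the occurrences of palindromic factors tightly enough to conclude that $\uu$ has exactly one right special factor of each length, i.e., $\mathcal C_\uu(n)=n+1$, which is the Sturmian property.

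I expect the main obstacle to be precisely this last step of the converse: the displayed inequality alone only reproduces the trivial lower bound $\mathcal C_\uu(n)\ge n+1$ and yields no upper bound, so one genuinely needs the combinatorial analysis of how the few palindromic factors of $\uu$ determine all of its factors --- in effect a self-contained argument that $\uu$ is rich and of complexity $n+1$. This is carried out in \cite{DrPi}, and a complete write-up here would either reproduce that argument or replace it by an equivalent one phrased in terms of return words to palindromes.
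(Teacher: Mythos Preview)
The paper does not give its own proof of this theorem: it is quoted verbatim from Droubay and Pirillo \cite{DrPi} and used as a black box (the very next sentence begins ``Moreover, when studying in detail the proof of this theorem presented by Droubay and Pirillo\ldots''). So there is nothing in the present paper to compare your proposal against.

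As for the proposal itself: your forward direction is sound, though it leans on machinery that is logically heavier and historically later than the Droubay--Pirillo paper --- you invoke richness from \cite{DrJuPi} and the palindromic-complexity bound, whereas the original argument works directly with the Rauzy graph and the single palindromic extension property (essentially what this paper records as Corollary~\ref{Coro_OnePalExt}). One technical slip: the bound $\mathcal P_\uu(n)+\mathcal P_\uu(n+1)\le \mathcal C_\uu(n+1)-\mathcal C_\uu(n)+2$ requires closure under reversal, not merely recurrence; you do establish closure under reversal for Sturmian sequences, so the forward direction survives, but you would need it again for the converse and there you do not derive it from the palindrome profile.

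The genuine gap is the converse. You correctly identify that the displayed inequality only gives the lower bound $\mathcal C_\uu(n)\ge n+1$, and you then say the remaining combinatorial analysis ``is carried out in \cite{DrPi}''. That is not a proof; it is a pointer to one. Your sketch of the nesting of palindromes is in the right spirit, but the step ``this rigid nesting \ldots\ pins down the occurrences \ldots\ tightly enough to conclude that $\uu$ has exactly one right special factor of each length'' is precisely the substance of the theorem and is not argued. If you intend a self-contained proof, that step must be written out; otherwise what you have is the forward implication plus a citation for the reverse one --- which, incidentally, is exactly how the present paper treats the result.
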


Moreover, when studying in detail the proof of this theorem presented by Droubay and Pirillo ~\cite{DrPi}, we deduce that any two palindromes of the same odd length have distinct centers, one has the center $0$ and the other one has the center $1$.
In fact, we get the following corollary.

\begin{coro}\label{Coro_OnePalExt}
A~binary sequence $\uu$ is Sturmian if and only if every palindrome in ${\mathcal L}(\uu)$ has a unique palindromic extension, i.e., for any palindrome $p \in \mathcal{L}(\uu)$
there exists a~unique letter $a\in \{0,1\}$ such that $apa \in
\mathcal{L}(\uu)$.
\end{coro}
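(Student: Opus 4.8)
The statement is an ``if and only if'' reformulation of Theorem~\ref{Thm_DrPi} in the language of palindromic extensions, so the natural strategy is to derive both directions from Theorem~\ref{Thm_DrPi} together with the refinement observed just above the corollary, namely that the two palindromes of a given odd length have distinct centers. I would organise the proof around counting palindromic extensions. For a palindrome $p \in \mathcal{L}(\uu)$, the words $apa$ with $a \in \{0,1\}$ that lie in $\mathcal{L}(\uu)$ are exactly the palindromic (central) extensions of $p$; note $|apa| = |p|+2$, so palindromic extensions of a palindrome of length $\ell$ are palindromes of length $\ell+2$ of the same parity. Also recall that a palindrome $p$ of length $\ell$ is always the central factor of at least one longer palindrome when $\uu$ is uniformly recurrent, but I should not lean on uniform recurrence; instead the cleanest route is to use the well-known fact that a recurrent palindrome occurs infinitely often and hence is a central factor of arbitrarily long palindromes — or, more safely for this binary setting, to argue directly with the parity/center bookkeeping below.

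\textbf{($\Rightarrow$) Sturmian implies unique palindromic extension.} Assume $\uu$ is Sturmian. Fix a palindrome $p \in \mathcal{L}(\uu)$ of length $\ell$. First I would show $p$ has \emph{at least one} palindromic extension. If $\ell$ is even, then by Theorem~\ref{Thm_DrPi} there is exactly one palindrome of each even length; chasing the unique palindromes of lengths $\ell, \ell+2, \ell+4, \dots$ and using that each longer palindrome contains the unique shorter one as its central factor, we see $p$ must be centrally extendable. If $\ell$ is odd, use that the two palindromes of length $\ell+2$ have centers $0$ and $1$ respectively; the one whose center matches that of $p$ contains $p$ centrally, giving a palindromic extension $apa$. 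Now uniqueness: suppose both $0p0$ and $1p1$ were in $\mathcal{L}(\uu)$. These are two palindromes of length $\ell+2$. If $\ell$ is even, $\ell+2$ is even and Theorem~\ref{Thm_DrPi} allows only one palindrome of that length — contradiction. If $\ell$ is odd, $0p0$ and $1p1$ are two distinct palindromes of the odd length $\ell+2$, but they have the \emph{same} center (namely the center of $p$), contradicting the refinement of Theorem~\ref{Thm_DrPi} that the two palindromes of a given odd length have distinct centers. Hence exactly one letter $a$ works.

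\textbf{($\Leftarrow$) Unique palindromic extension implies Sturmian.} For the converse I would prove the palindrome-counting criterion of Theorem~\ref{Thm_DrPi} by induction on the length, exploiting that every palindrome of length $\ell+2$ arises as $apa$ for a unique palindrome $p$ of length $\ell$ (its central factor), and conversely each palindrome $p$ of length $\ell$ produces exactly one palindrome $apa$ of length $\ell+2$ by hypothesis. Thus the map $p \mapsto apa$ is a bijection between palindromes of length $\ell$ and palindromes of length $\ell+2$, so the number of palindromes is constant on each parity class. The base cases are $\ell = 0$ (one palindrome, $\varepsilon$) and $\ell = 1$ (two palindromes, $0$ and $1$, assuming $\uu$ is not eventually constant — which follows since a unary sequence has no palindrome with two extensions inside its language; one should remark that the hypothesis forces both letters to occur). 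Therefore $\uu$ has one palindrome of each even length and two of each odd length, and by Theorem~\ref{Thm_DrPi} it is Sturmian.

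\textbf{Main obstacle.} The genuinely delicate point is the ``at least one extension'' part of the forward direction and, symmetrically, making sure in the backward direction that the central-factor map is well defined and surjective onto palindromes of the next length up — i.e. that every palindrome of length $\ell+2$ really does contain a palindrome of length $\ell$ centrally (immediate: strip the first and last letters). The subtlety is purely about whether a palindrome can fail to be centrally extendable at all; this is where invoking the center-distinctness refinement of Droubay--Pirillo's proof, rather than Theorem~\ref{Thm_DrPi} as a black box, does the real work. Everything else is routine bookkeeping with lengths and centers.
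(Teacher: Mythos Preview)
Your approach is precisely the one the paper has in mind: the paper gives no detailed argument for the corollary, only remarking that it follows from Theorem~\ref{Thm_DrPi} together with the observation (extracted from Droubay and Pirillo's proof) that the two odd-length palindromes have distinct centers. Your forward direction is a correct unpacking of that remark, and your bijection argument for the backward direction is the right mechanism.

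There is, however, a genuine gap in your base case for~$(\Leftarrow)$. You claim that the unique-extension hypothesis ``forces both letters to occur'' because ``a unary sequence has no palindrome with two extensions inside its language.'' That inference is backwards: the hypothesis demands \emph{exactly one} palindromic extension, not two, and a unary sequence satisfies it perfectly. Concretely, take $\uu = 0^\omega$: every palindrome $0^k$ has the unique palindromic extension $0^{k+2}$, yet $\uu$ is not Sturmian. So your induction starts with only one palindrome of length~$1$, the bijection propagates one palindrome per odd length, and Theorem~\ref{Thm_DrPi} no longer applies. This is arguably an imprecision in the corollary as stated (one should add that $\uu$ is aperiodic, or that both letters occur), but your attempted justification for ruling out the degenerate case is simply wrong and should be replaced by an explicit extra hypothesis or a reference to the context in which the corollary is used.
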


We believe that Theorem~\ref{Thm_shortestBS} is already known. However, since we have not found it in the literature, we add its proof.
For this purpose, we need an auxiliary lemma.
Let us recall that the language $\mathcal{L}(\mathbf{u})$ of a Sturmian sequence $\uu$ is \textit{closed under reversal}, i.e., $\mathcal{L}(\mathbf{u})$ contains with every factor $w$ also its reversal $\overline{w}$, and all bispecial factors of $\uu$ are palindromes.

\begin{lem}\label{Lem_CenterOfBS}
Let $\uu$ be a Sturmian sequence. Let $p \in \mathcal{L}(\uu)$ be a palindrome and let $v$ be the shortest bispecial factor containing $p$.
\begin{enumerate}
\item Then $p$ is a central factor of $v$, i.e., $v = sp\overline{s}$ for some word $s$.
\item If $v'$ is the shortest bispecial factor with the same center as $p$ and  $|v'|\geq |p|$, then $v'=v$.
\end{enumerate}
\end{lem}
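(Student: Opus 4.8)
\emph{Proof proposal.} The plan is to prove Part~1 by induction along the $S$-adic representation of $\uu$, and then to deduce Part~2 almost immediately from the Droubay--Pirillo description of palindromes. Since every notion involved depends only on $\mathcal{L}(\uu)$, I would first pass to the standard Sturmian sequence with the same language, whose bispecial factors are exactly its palindromic prefixes ordered by length. I will prove, by induction on $n$ (simultaneously for all standard Sturmian sequences): \emph{if $b$ is the $n^{th}$ bispecial factor of a standard Sturmian sequence and $b$ is the shortest bispecial factor containing a palindrome $p$, then $p$ is a central factor of $b$}. The cases $n=0,1$ are trivial. For $n\ge 2$ we may assume by Remark~\ref{Rem_Zamena} that the directive sequence of $\uu$ starts with $G$ (the other case being symmetric under the exchange of letters); then $\uu=G(\uu')$, the sequence $\uu$ contains no factor $00$ and starts with $1$, and by Lemma~\ref{Lem_ImageG}(ii) --- together with the fact that $b'\mapsto G(b')1$ is length-increasing along the chain of bispecial factors --- we have $b=G(b')1$, where $b'$ is the $(n-1)^{th}$ bispecial factor of $\uu'$.

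If $p$ begins, hence ends, with $0$, I replace it by $1p1$: in $\uu$ every $0$ is flanked by $1$'s, so $1p1$ is again a palindrome of $\mathcal{L}(\uu)$ whose shortest containing bispecial factor is still $b$, and since $b$ begins and ends with $1$, the property ``$p$ is central in $b$'' is equivalent to ``$1p1$ is central in $b$''. So I may assume $p$ ends with $1$; as $\uu$ has no factor $00$, this forces $p=G(p')1$ for a uniquely determined palindrome $p'\in\mathcal{L}(\uu')$, and by recognizability of $G$ on Sturmian sequences, $b$ is the shortest bispecial factor of $\uu$ containing $p$ if and only if $b'$ is the shortest bispecial factor of $\uu'$ containing $p'$. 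The induction hypothesis then gives $b'=s'p'\overline{s'}$ for some word $s'$, and, using the identity $1\,\overline{G(s')}=G(\overline{s'})\,1$ (a one-line induction from $G(0)=10$, $G(1)=1$),
$$b=G(s'p'\overline{s'})\,1=G(s')\,G(p')\,G(\overline{s'})\,1=G(s')\,\bigl(G(p')1\bigr)\,\overline{G(s')}=G(s')\,p\,\overline{G(s')}\,,$$
so $p$ is a central factor of $b$. The $D$-case is identical, using $0\,\overline{D(s')}=D(\overline{s'})\,0$ and replacing $p$ by $0p0$ when $p$ begins with $1$. This establishes Part~1.

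For Part~2, Part~1 gives $v=sp\overline{s}$, so $v$ is a bispecial palindrome with the same centre as $p$ and with $|v|\ge|p|$; hence $|v'|\le|v|$. Conversely, by Theorem~\ref{Thm_DrPi} and the remark following it, for each length of the appropriate parity there is at most one palindrome of $\mathcal{L}(\uu)$ with a prescribed centre; and since, by Corollary~\ref{Coro_OnePalExt}, the unique extension $aqa$ of a palindrome $q$ has the same centre as $q$, the palindromes of $\mathcal{L}(\uu)$ sharing the centre of $p$ and of length $\ge|p|$ form a chain $p=p_0,p_1,p_2,\dots$ in which each term is a central factor of the next. Hence $v'$, being one of these palindromes, contains $p$ as a central factor; so $v'$ is a bispecial factor containing $p$, and therefore $|v|\le|v'|$. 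Thus $|v|=|v'|$, and $v$ and $v'$ are palindromes of equal length with the same centre, so $v=v'$. The genuinely technical point is the inductive step of Part~1: one must make the $G$-~(resp.\ $D$-)desubstitution of palindromes precise, handle the ``wrong last letter'' by passing to $1p1$ (resp.\ $0p0$), and invoke recognizability so that the property ``$b$ is the shortest bispecial factor containing $p$'' transfers correctly to $\uu'$; once this is in place, Lemmas~\ref{Lem_ImageG} and \ref{Lem_ImageD} and the commutation identity do the rest.
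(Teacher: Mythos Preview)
Your proof is correct, but Part~1 takes a genuinely different route from the paper. The paper gives a short direct argument that never touches the $S$-adic machinery: let $sp$ be the shortest \emph{left special} factor containing $p$; closure of $\mathcal{L}(\uu)$ under reversal then makes $p\overline{s}$ right special, and since $s$ (resp.\ $\overline{s}$) is the \emph{only} extension of $p$ to the left (resp.\ right) by a word of length $|s|$, one gets immediately that $sp\overline{s}$ is bispecial and is the shortest bispecial factor containing $p$. This is five lines and uses nothing beyond closure under reversal. Your induction along the directive sequence is sound---the desubstitution of palindromes via $p=G(p')1$, the replacement $p\mapsto 1p1$ to force the correct last letter, the identity $1\,\overline{G(s')}=G(\overline{s'})\,1$, and the transfer of ``shortest bispecial containing $p$'' through Lemma~\ref{Lem_ImageG} all check out---but it is considerably heavier, and it imports the standard-sequence normalisation and Lemmas~\ref{Lem_ImageG}/\ref{Lem_ImageD} into a statement that does not need them. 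What your approach buys is that it fits the $S$-adic toolkit used elsewhere in the paper; what the paper's approach buys is that Part~1 becomes a two-sentence consequence of reversal-closure, independent of the Sturmian fine structure.

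For Part~2 your argument is essentially equivalent to the paper's: both hinge on Corollary~\ref{Coro_OnePalExt}. The paper argues by contradiction, taking the longest common central palindrome of $v$ and $v'$ and exhibiting two palindromic extensions of it; you instead build the chain $p=p_0\subset p_1\subset\cdots$ of same-centre palindromes directly from unique palindromic extension and read off $p\subset v'$. Either formulation is fine.
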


\begin{proof}
\begin{enumerate}
\item
Let  $v = sp$ be the shortest left special factor containing $p$, in particular $0sp$ and $1sp$  belong to ${\mathcal L}(\uu)$.  Since the language $\mathcal{L}(\uu)$ is closed under reversal, $p\overline{s}$ is right special, i.e., $p\overline{s}0$ and $p\overline{s}1$ belong to ${\mathcal L}(\uu)$.   As $s$ is the only possible extension of $p$ to the left by a factor of length $|s|$, both $sp\overline{s}0$ and $sp\overline{s}1$ belong to ${\mathcal L}(\uu)$.   By the same argument,  $\overline{s}$ is the only possible extension of $p$ to the right by a factor of length $|s|$. Therefore,   $0sp\overline{s}$ and $1sp\overline{s}$ belong to ${\mathcal L}(\uu)$. Thus $sp\overline{s}$ is the shortest bispecial factor containing $p$.

\item Assume for contradiction that $v\not =v'$. Since $v$ and $v'$ are palindromes with the same centers, there exists a palindrome $q$ such that $v'=s'q\overline{s'}$ and $v=sq\overline{s}$. Let $q$ be the longest palindrome with this property. If $|q|=|v'|$, then necessarily $v'=v$. If $|v'|>|q|$, then the last letters of $s'$ and $s$ are distinct and $q$ is a palindrome with two distinct palindromic extensions. This contradicts Corollary~\ref{Coro_OnePalExt}.
\end{enumerate}
\end{proof}

\begin{thm}\label{Thm_shortestBS}
Let $\uu$ be a Sturmian sequence and $n \in \N, n \geq 1$. Find the shortest bispecial factors $P_\varepsilon$, resp. $P_0$, resp. $P_1$ of length greater than or equal to $n$ with the center $\varepsilon$, resp. $0$, resp. $1$. Then the following statements hold:
\begin{enumerate}
\item Let $w$ be a factor of $\uu$ of length $n$ and let $v$ be the shortest bispecial factor containing $w$. Then $v \in \{P_\varepsilon, P_0, P_1\}$.
\item If $\uu$ contains no bispecial factor of length $n-1$, then for each $i\in \{\varepsilon, 0,1\}$ there exists a factor $w$ of $\uu$ of length $n$ such that the shortest bispecial factor containing $w$ is $P_i$.
\item If there exists a bispecial factor $v$ of $\uu$ of length $n-1$ and let $i \in \{\varepsilon, 0,1\}$ be the center of the palindrome $v$, then for each $j\in \{\varepsilon, 0,1\}, \ j\not =i,$ there exists a factor $w$ of $\uu$ of length $n$ such that the shortest bispecial factor containing $w$ is $P_j$, while $P_i$ is not the shortest bispecial factor containing $w$ for any factor $w$ of $\uu$ of length $n$.
\end{enumerate}
\end{thm}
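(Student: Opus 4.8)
The plan is to rely on the structural facts about palindromes and bispecial factors established just above, in particular Corollary~\ref{Coro_OnePalExt} (unique palindromic extension), Theorem~\ref{Thm_DrPi} (one palindrome per even length, two per odd length), and above all Lemma~\ref{Lem_CenterOfBS}, which says that the shortest bispecial factor containing a palindrome $p$ has $p$ as a central factor and is determined by the center of $p$ among all bispecial factors of length $\geq |p|$. The first step is to reduce from an arbitrary factor $w$ to a palindrome: given $w$ of length $n$, let $p$ be the \emph{longest palindromic factor} of $w$ that is ``central enough'' — more precisely, I would use the fact that in a Sturmian sequence the shortest bispecial factor $v$ containing $w$ is a palindrome, hence contains the longest palindromic prefix or suffix of itself, and by minimality $w$ sits symmetrically around the center of $v$. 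So $v$ contains a palindrome $p$ (namely the central palindrome of $v$ reaching at least to the borders forced by $w$) with $|p|\geq n$ wait — more carefully, $w$ forces $v$ to have length $\geq n$, and $v$ is a palindrome, so its center has type $i\in\{\varepsilon,0,1\}$ and $v$ is a bispecial factor of length $\geq n$ with center $i$; by Lemma~\ref{Lem_CenterOfBS}(2) applied to the central palindrome of $v$, $v$ must equal $P_i$, the shortest bispecial factor of length $\geq n$ with center $i$. This proves Item~1.

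For Item~2, assume $\uu$ has no bispecial factor of length $n-1$. Fix $i\in\{\varepsilon,0,1\}$. By Theorem~\ref{Thm_DrPi} there is a palindrome $p_i$ of length exactly $n-1$ if $n-1$ is even (case $i=\varepsilon$) or length exactly $n$ wait — I need a palindrome of length $n$ or $n-1$ with center $i$; by Theorem~\ref{Thm_DrPi}, for each center $i$ there is a palindrome $p$ with center $i$ whose length has the right parity and is either $n-1$ or $n$, in any case $p$ can be chosen so that some factor $w$ of length $n$ has $p$ as its ``maximal central palindrome''. Then the shortest bispecial factor containing $w$ has center $i$, hence by Item~1 it is $P_i$. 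The precise bookkeeping — choosing $w$ of length exactly $n$ around $p$ so that no bispecial factor strictly between $p$ and $v$ intervenes — is where the hypothesis ``no bispecial factor of length $n-1$'' is used: it guarantees that the palindrome of length $n-1$ (or the two of length $n$) is not already bispecial, so the extension to $P_i$ is genuinely forced by a length-$n$ window and not shortcut.

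For Item~3, suppose there is a bispecial factor $v_0$ of length $n-1$ with center $i$. Then $v_0$ itself is a bispecial factor, and any length-$n$ window $w$ whose maximal central palindrome is the length-$(n-1)$ or length-$n$ palindrome with center $i$ would have its shortest containing bispecial factor equal to $v_0$ (which has length $n-1 < n$), \emph{not} $P_i$; in fact $v_0$ has length $<n$ so it cannot be a $P_j$. This is exactly the assertion that $P_i$ is not the shortest bispecial factor containing any $w$ of length $n$: any $w$ of length $n$ whose center-type is $i$ is already contained in the bispecial factor $v_0$ of length $n-1<n$, so the minimal one has length $\le n-1$. Wait — but $w$ has length $n > n-1 = |v_0|$, so $w\not\subseteq v_0$; the correct statement is that $w$ of length $n$ with center $i$ forces a bispecial factor of length $\ge n$ with center $i$, namely $P_i$, unless — hmm. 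Let me restate: the point is that when a bispecial factor of length $n-1$ with center $i$ exists, there is a length-$n$ factor $w$ whose shortest containing bispecial factor has center $i$ \emph{only if} that bispecial factor has length $\geq n$; but the length-$(n-1)$ bispecial factor being already present means the ``branching'' that a length-$n$ window detects near center $i$ is resolved by $v_0$, so actually \emph{every} length-$n$ window sitting around center $i$ has $v_0$ as a sub-bispecial-factor and its minimal bispecial factor is $v_0$ of length $n-1<n$ — contradiction with $|w|=n$. I expect this third item to be the main obstacle: one must carefully argue, using Corollary~\ref{Coro_OnePalExt} and the uniqueness of palindromes per length, that the presence of the bispecial factor $v_0$ of length $n-1$ ``absorbs'' all length-$n$ windows of center-type $i$ so that none of them first becomes bispecial only at $P_i$, whereas for $j\neq i$ the argument of Item~2 still produces a suitable $w$ because the obstructing bispecial factor has the wrong center. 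The cleanest way to organize this is to track, for each center type, the increasing sequence of bispecial palindromes with that center and observe that a length-$n$ window selects the first one of length $\geq n$, then feed in the parity/counting information from Theorem~\ref{Thm_DrPi} and Corollary~\ref{Coro_OnePalExt} to pin down when such a window of each center type exists.
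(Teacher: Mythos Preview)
Your argument for Item~1 has a genuine gap. You correctly observe that the shortest bispecial factor $v$ containing $w$ is a palindrome of length $\geq n$ with some center $i\in\{\varepsilon,0,1\}$, and that $P_i$ is by definition the shortest bispecial factor of length $\geq n$ with center $i$, so $|P_i|\leq |v|$. But to conclude $v=P_i$ you would need to know that $w$ is also a factor of $P_i$; otherwise minimality of $v$ gives you nothing. Your appeal to ``Lemma~\ref{Lem_CenterOfBS}(2) applied to the central palindrome of $v$'' does not supply this: applied with $p=v$ the lemma is vacuous (it returns $v=v$), and applied with $p$ equal to some shorter central palindrome of $v$ you have no control over whether the shortest bispecial factor containing $p$ is still $v$ rather than something smaller. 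The crux is that $w$ is an \emph{arbitrary} length-$n$ factor, not a palindrome, so Lemma~\ref{Lem_CenterOfBS} does not apply to it directly. You need a bridge from $w$ to a palindrome $p$ with the property that $w$ and $p$ share the same shortest containing bispecial factor.

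The paper supplies exactly this bridge via the Rauzy graph $\Gamma_{n-1}$: every factor $w$ of length $n$ is an edge of $\Gamma_{n-1}$ and lies on exactly one of three paths $p_A,p_B,p_C$; all edges on the same path share the same shortest containing bispecial factor, namely the shortest bispecial factor containing the entire path; and the mirror automorphism of $\Gamma_{n-1}$ forces each path to be a palindrome, with the three centers pairwise distinct (here Theorem~\ref{Thm_DrPi} enters). Only then does Lemma~\ref{Lem_CenterOfBS} finish the job. This decomposition into three paths is also what makes Items~2 and~3 fall out cleanly: the case split is simply whether the path $p_A$ has positive length (no bispecial factor of length $n-1$) or collapses to a single vertex (there is a bispecial factor of length $n-1$, and it absorbs the third center type). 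Your treatment of Items~2 and~3 never stabilises into an argument---the repeated ``wait'' and ``hmm'' are symptoms of the missing structural decomposition. In particular, your attempt at Item~3 oscillates between two incompatible readings of what it means for a length-$n$ window to ``have center type $i$'', precisely because you have no analogue of the paths $p_A,p_B,p_C$ to anchor the classification.
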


\begin{proof}
Consider the {\em Rauzy graph} of $\uu$ of order $n-1$, i.e., a~directed graph $\Gamma_{n-1}$ whose vertices are factors of $\uu$ of length $n-1$
and edges are factors of $\uu$ of length $n$. An edge $e$ starts
in the vertex $x$ and ends in the vertex $y$ if $x$ is a prefix
and $y$ is a suffix of $e$.
Denote $\ell$, resp. $r$ the vertex corresponding to the unique left special, resp. right special factor of length $n-1$. Furthermore, we denote  the shortest path from $\ell$ to $r$ by $p_A$, and the shortest paths  of non-zero length starting in $r$ and ending in $\ell$ by $p_B$ and $p_C$. If $\uu$ has no~bispecial factor of length $n-1$, then $p_A$ has  a~positive length,  see Figure~\ref{Fig_Rauzy}(a). If $\uu$ has a~bispecial factor of length $n-1$, then the path $p_A$ consists of a~unique vertex -- the bispecial factor $b$, see Figure~\ref{Fig_Rauzy}(b).
\begin{figure}[!h]
\begin{center}
\resizebox{11 cm}{!}{\includegraphics{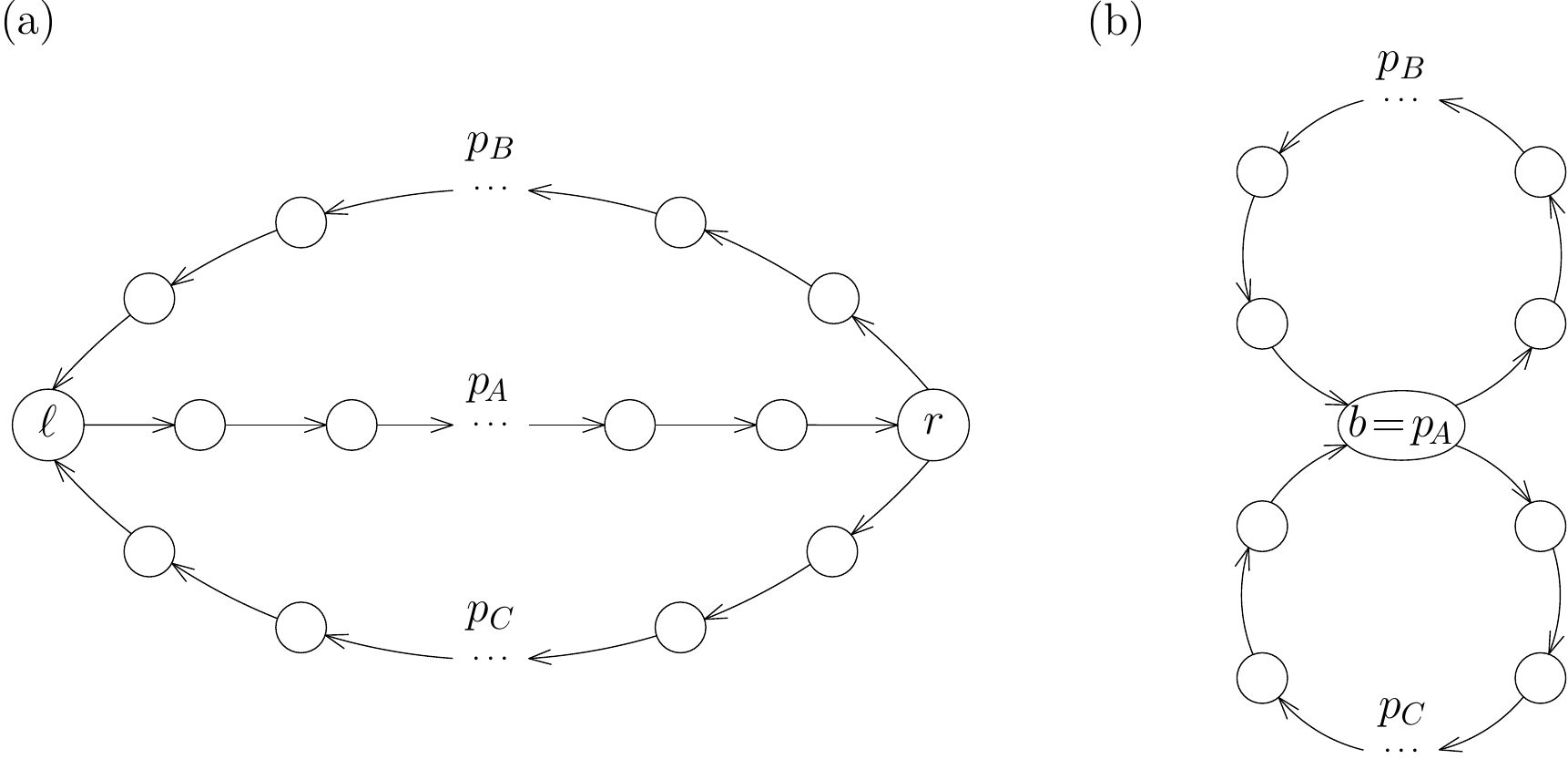}}
\end{center}
\caption{The Rauzy graph of a~Sturmian word (a) without a bispecial vertex, (b) with a bispecial vertex.} \label{Fig_Rauzy}
\end{figure}

Observing these Rauzy graphs, it is obvious that for each edge $e$ from the path $p_x$, where $x \in \{A,B,C\}$, the shortest bispecial factor containing $e$ is the same as the shortest bispecial factor containing $p_x$.
Clearly, if $p_A$ has length $0$, then $x \in \{B,C\}$.

Since the language ${\mathcal L}(\uu)$ is closed under reversal, the mirror mapping restricted to the factors of length $n-1$ and $n$ is an automorphism of the graph $\Gamma_{n-1}$.
Let us suppose that  a palindrome $q$ of length $n-1$ or $n$ is contained in $p_x$ as an edge or an inner vertex.
As any palindrome is mapped onto itself, $\overline{r}=\ell$, and $\overline{\ell}=r$, this path $p_x$ is mapped onto itself, i.e., $\overline{p_x}=p_x$, and the palindrome $q$ is a central factor of $p_x$.
On one hand, it means that $p_x$ is a palindrome with the same center as $q$, on the other hand, it also means that $p_x$ cannot contain any other palindrome of length $n-1$ and $n$ as its edge or an inner vertex.

By Theorem~\ref{Thm_DrPi} and  the  comment  after it, there  are exactly three palindromes among all vertices and edges of $\Gamma_{n-1}$ (all factors of length $n-1$ or $n$), and moreover, they have distinct centers.
We may conclude that the paths $p_A$, $p_B$, $p_C$ are palindromes and their centers are distinct. The rest of the proof follows from Lemma~\ref{Lem_CenterOfBS}.

\end{proof}

\begin{obs}\label{Obs_ParikhPal}
Let $P$ be a palindrome. Then its Parikh vector satisfies:
\begin{enumerate}
\item $\vec{V}(P)= \binom{0}{0} \mod 2$ if and only if $P$ has the center $\varepsilon$;
\medskip

\item $\vec{V}(P)=\binom{1}{0} \mod 2$ if and only if $P$ has the center $0$;
\medskip

\item $\vec{V}(P)=\binom{0}{1} \mod 2$ if and only if $P$ has the center $1$.
\end{enumerate}
\end{obs}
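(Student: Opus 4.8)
The plan is to split a palindrome according to its \emph{center} and to read off the parity of its Parikh vector in each of the three resulting cases. The only ingredient beyond the definition of the center recalled above is the trivial remark that reversal does not change letter counts, i.e. $\vec{V}(\overline{w}) = \vec{V}(w)$ for every $w\in\{0,1\}^*$, since $\overline{w}$ is merely a rearrangement of the letters of $w$.

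With this in hand I would argue the three forward implications at once. Let $P$ be a palindrome. If $|P|$ is even, its center is $\varepsilon$ and $P = s\overline{s}$ for some $s$, whence $\vec{V}(P) = \vec{V}(s) + \vec{V}(\overline{s}) = 2\vec{V}(s) \equiv \binom{0}{0} \mod 2$. If $|P|$ is odd, its center is a letter $a\in\{0,1\}$ and $P = sa\overline{s}$ for some $s$, so $\vec{V}(P) = 2\vec{V}(s) + \vec{V}(a) \equiv \vec{V}(a) \mod 2$, which is $\binom{1}{0}$ when $a = 0$ and $\binom{0}{1}$ when $a = 1$. This yields the ``if'' direction of each of the three items.

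For the converse implications I would invoke the fact that every palindrome has exactly one center, lying in $\{\varepsilon, 0, 1\}$, and observe that the forward implications just proved attach to these three mutually exclusive and exhaustive cases three pairwise distinct residues $\binom{0}{0}$, $\binom{1}{0}$, $\binom{0}{1}$ modulo $2$. Consequently the residue class of $\vec{V}(P)$ modulo $2$ determines the center, which is exactly the ``only if'' direction in each item (note in passing that the residue $\binom{1}{1}$ never occurs for a palindrome, consistently with the even-length case). I anticipate no genuine obstacle here; the one point deserving a sentence of care is to make the exhaustiveness/disjointness of the three center-types explicit, so that the equivalences follow immediately from the three forward implications rather than being re-proved separately.
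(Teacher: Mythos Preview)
Your argument is correct and is exactly the natural proof of this fact: decompose $P$ as $s\overline{s}$ or $sa\overline{s}$ according to its center, use $\vec{V}(\overline{s})=\vec{V}(s)$, and then obtain the equivalences from the mutual exclusivity and exhaustiveness of the three center types. The paper states this as an Observation without any proof, so there is nothing to compare; your write-up simply spells out what the authors deem immediate.
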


Let us recall that a factor of a standard Sturmian sequence $\uu$ is bispecial if and only if it is a palindromic prefix of $\uu$.
Therefore we can order the bispecial factors of a given standard Sturmian sequence $\uu$ according to their lengths. We denote the $k$-th bispecial factor of $\uu$ by $BS(k)$.
Thus $BS(0)=\varepsilon$, $BS(1)=a$, where $a$ is the first (and the more frequent) letter of $\uu$ etc.

\medskip

The sequences $(p_N)$,  $(q_N)$,  and  $(q'_N)$  we use in the remaining part of the paper were  introduced in Notation \ref{Theta}, the notation $\mathcal{B}_{\uu}(n)$ comes from Lemma   \ref{Lem_BN}.

\begin{thm}\label{Thm_P01E}
Let $\uu$ be a standard Sturmian sequence with the directive sequence $G^{a_1}D^{a_2}G^{a_3}D^{a_4} \cdots $ or $D^{a_1}G^{a_2}D^{a_3}G^{a_4} \cdots $, and $n \in [q_N', q_{N+1}')$ for some $N \in \N$.
Put $M = a_0+a_1+a_2+ \cdots +a_{N}$, where $a_0=0$.
\begin{itemize}
\item  If $n  \in [q_N', q_{N+1}'-1)$ and $n-1$ is not the length of a bispecial factor, then
$$ \mathcal{B}_{\uu}(n) = \{BS(M+m), BS(M+m+1), BS(M+a_{N+1}+1)\} \ \text{for some $m \in \{0,\dots,a_{N+1}-1\}$}.$$
\item If $n  \in [q_N', q_{N+1}'-1]$ and $n-1$ is the length of a bispecial factor, then
$$\mathcal{B}_{\uu}(n) = \{BS(M+m), BS(M+a_{N+1}+1)\}\ \text{ for some $m \in \{0,\dots,a_{N+1}\}$}.$$
\end{itemize}
\end{thm}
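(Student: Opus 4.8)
The plan is to reduce the statement to Theorem~\ref{Thm_shortestBS} combined with the explicit Parikh vectors of Proposition~\ref{ParikhRSB}. For $i\in\{\varepsilon,0,1\}$ let $P_i$ denote the shortest bispecial factor of $\uu$ of length at least $n$ with center $i$, and write $c(k)$ for the center of the $k$-th bispecial factor $BS(k)$. By Theorem~\ref{Thm_shortestBS}, $\mathcal{B}_\uu(n)=\{P_\varepsilon,P_0,P_1\}$ if $\uu$ has no bispecial factor of length $n-1$, and $\mathcal{B}_\uu(n)=\{P_\varepsilon,P_0,P_1\}\setminus\{P_i\}$, where $i$ is the center of the bispecial factor of length $n-1$, otherwise. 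By Observation~\ref{Obs_ParikhPal} the value $c(k)$ is determined by $\vec V(BS(k))\bmod 2$ (and the three cases listed there are exhaustive for a palindrome), and by Proposition~\ref{ParikhRSB}(3), writing $k=j+a_0+\cdots+a_L$ with $0\le j<a_{L+1}$, one has $\vec V(BS(k))=(j+1)\binom{p_L}{q_L}+\binom{p_{L-1}}{q_{L-1}}-\binom{1}{1}$ and hence $|BS(k)|=(j+1)q'_L+q'_{L-1}-2$; both formulae extend consistently to $j=a_{L+1}$, where $BS(k)$ is the first bispecial factor of the next block. So the whole statement becomes a computation with convergents.

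I would first settle the lengths. With $M=a_0+\cdots+a_N$ and $n\in[q'_N,q'_{N+1})$ one checks $|BS(M-1)|=q'_N-2<q'_N\le n$, the lengths $|BS(M+j)|=(j+1)q'_N+q'_{N-1}-2$ strictly increase for $0\le j\le a_{N+1}$ with $|BS(M+a_{N+1})|=q'_{N+1}+q'_N-2\ge q'_{N+1}-1\ge n$, and $|BS(M+a_{N+1}+1)|=2q'_{N+1}+q'_N-2>n$ (checking the sub-cases $a_{N+2}=1$ and $a_{N+2}\ge2$ separately, as the admissible parametrisation of the index changes). Hence there is a least $m\in\{0,\dots,a_{N+1}\}$ with $|BS(M+m)|\ge n$, and $BS(M+m)$ is then the globally shortest bispecial factor of $\uu$ of length at least $n$; moreover $m\le a_{N+1}-1$ as soon as $n\le q'_{N+1}-2$. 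Since the bispecial lengths are strictly increasing and $|BS(M+m-1)|<n\le|BS(M+m)|$, the sequence $\uu$ has a bispecial factor of length $n-1$ exactly when $m\ge1$ and $n-1=|BS(M+m-1)|$ (impossible for $m=0$, since then $|BS(M-1)|=q'_N-2<n-1$); for $n=q'_{N+1}-1$ this always holds, with $n-1=|BS(M+a_{N+1}-1)|$, so the two bullets exhaust $[q'_N,q'_{N+1})$.

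The core is the pattern of centers. Reducing modulo $2$, $\vec V(BS(M+j))\equiv v_0+j\binom{p_N}{q_N}$ for $0\le j\le a_{N+1}$, where $v_0=\binom{p_N}{q_N}+\binom{p_{N-1}}{q_{N-1}}+\binom{1}{1}$ (the case $j=a_{N+1}$ uses the recurrence $\binom{p_{N+1}}{q_{N+1}}=a_{N+1}\binom{p_N}{q_N}+\binom{p_{N-1}}{q_{N-1}}$). By Lemma~\ref{Lem_Convergents} $\binom{p_N}{q_N}\not\equiv\binom{0}{0}$, and no palindrome has Parikh vector $\equiv\binom{1}{1}$, so $v_0$ and $v_0+\binom{p_N}{q_N}$ are two distinct members of $\{\binom{0}{0},\binom{1}{0},\binom{0}{1}\}$; therefore the centers of $BS(M),BS(M+1),\dots,BS(M+a_{N+1})$ alternate between exactly two of the three possible centers, the missing one $j_0$ corresponding to the parity $\binom{p_N}{q_N}+\binom{1}{1}$ (the three center parities sum to $\binom{1}{1}$). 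A direct computation from Proposition~\ref{ParikhRSB}(3), again splitting $a_{N+2}=1$ off from $a_{N+2}\ge2$, gives $\vec V(BS(M+a_{N+1}+1))\equiv\binom{p_N}{q_N}+\binom{1}{1}$, which by Lemma~\ref{Lem_Convergents} is $\not\equiv\binom{0}{0}$, so $BS(M+a_{N+1}+1)$ has precisely the center $j_0$. Finally, every bispecial factor of length $\ge n\ge q'_N$ has index $\ge M$ (as $|BS(M-1)|<q'_N$), and none of $BS(M),\dots,BS(M+a_{N+1})$ has center $j_0$, whence $P_{j_0}=BS(M+a_{N+1}+1)$.

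It remains to assemble. Always $P_{c(M+m)}=BS(M+m)$, since $BS(M+m)$ is the globally shortest bispecial factor of length $\ge n$. In the first bullet, $n\le q'_{N+1}-2$ forces $m\le a_{N+1}-1$, so $BS(M+m)$ and $BS(M+m+1)$ lie in the block carrying the two distinct block-$N$ centers, and the only bispecial factor with the center of $BS(M+m+1)$ that is shorter than $BS(M+m+1)$ is the too-short $BS(M+m-1)$, so $P_{c(M+m+1)}=BS(M+m+1)$; as there is no bispecial factor of length $n-1$, $\mathcal{B}_\uu(n)=\{P_\varepsilon,P_0,P_1\}=\{BS(M+m),BS(M+m+1),BS(M+a_{N+1}+1)\}$. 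In the second bullet, the bispecial factor of length $n-1$ is $BS(M+m-1)$ with $m\ge1$, which lies in the block alongside $BS(M+m)$, so its center is the block-$N$ center different from $c(M+m)$ and different from $j_0$; dropping it leaves $\mathcal{B}_\uu(n)=\{P_{c(M+m)},P_{j_0}\}=\{BS(M+m),BS(M+a_{N+1}+1)\}$. The case of a directive sequence $D^{a_1}G^{a_2}\cdots$ follows by Remark~\ref{Rem_GandDexchanged}, which only exchanges the two coordinates of every Parikh vector — swapping the labels $0\leftrightarrow1$ of centers, hence not affecting the label-free statement — and under which Lemma~\ref{Lem_Convergents} applies with $p_N$ in place of $q_N$. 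The step I expect to require the most care is the center bookkeeping of the third paragraph: pinning down that $BS(M+a_{N+1}+1)$ always carries the center missing from the block, uniformly over the parities of $\binom{p_N}{q_N}$, and reconciling this with the small-$N$ and boundary cases (where $BS(M)$ is itself shorter than $n$, forcing the second bullet).
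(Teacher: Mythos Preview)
Your proposal is correct and follows essentially the same approach as the paper: both reduce to Theorem~\ref{Thm_shortestBS}, read off lengths and Parikh vectors from Proposition~\ref{ParikhRSB}, and use Lemma~\ref{Lem_Convergents} with Observation~\ref{Obs_ParikhPal} to show that the centers of $BS(M),\dots,BS(M+a_{N+1})$ alternate between two values while $BS(M+a_{N+1}+1)$ carries the third. Your version is slightly more explicit (the case split on $a_{N+2}$, the computation of the missing parity as $\binom{p_N}{q_N}+\binom{1}{1}$), but there is no substantive difference; one small wording slip is that in the first bullet there may be several shorter bispecials with the center of $BS(M+m+1)$, not only $BS(M+m-1)$, though all have index $<M+m$ and hence length $<n$, so your conclusion stands.
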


\begin{proof}
Assume that $\uu$ has the directive sequence $G^{a_1}D^{a_2}G^{a_3}D^{a_4} \cdots $.
By Proposition~\ref{ParikhRSB}, the Parikh vectors of bispecial factors satisfy
\begin{equation}\label{alternace}
\vec{V}\bigl(BS(M+i+1)\bigr) =  \vec{V}\bigl(BS(M+i)\bigr) + \left(\!\!\begin{array}{c}p_N\\ q_N\end{array}\!\!\right) \quad \text{for } \ i=0,1,\ldots, a_{N+1} -1
\end{equation}
\begin{equation}\label{jiny}
\text{and} \qquad \vec{V}\bigl(BS(M+a_{N+1}+1)\bigr) =  \vec{V}\bigl(BS(M+a_{N+1})\bigr) + \left(\!\!\begin{array}{c}p_{N+1}\\ q_{N+1}\end{array}\!\!\right).
\end{equation}
Using Observation~\ref{Obs_ParikhPal} and the relation $\left(\!\!\begin{array}{c}p_N\\ q_N\end{array}\!\!\right)\not =\left(\!\!\begin{array}{c}0\\ 0\end{array}\!\!\right) \mod 2$ from  Lemma~\ref{Lem_Convergents}, we deduce that the centers of palindromes $BS(M+i)$, where $i=0,1, \ldots, a_{N+1}$, alternate between two distinct elements of $\{\varepsilon, 0,1\}$.   The third element of  $\{\varepsilon, 0,1\}$ is the center of the palindrome $BS(M+a_{N+1}+1)$, as
$\left(\!\!\begin{array}{c}p_N\\ q_N\end{array}\!\!\right)\not =\left(\!\!\begin{array}{c}p_{N+1}\\ q_{N+1}\end{array}\!\!\right) \mod 2$, see Lemma~\ref{Lem_Convergents}.  By Proposition~\ref{ParikhRSB}, the length of $BS(M-1)$ equals $q_N'-2$ and the length of $BS(M+a_{N+1}-1)$ equals $q_{N+1}'-2$.
\begin{itemize}
\item Let  us discuss the case  $n =   q_{N+1}'-1$.   The palindromes   $BS(M+a_{N+1}-1)$, $BS(M+a_{N+1})$ and $BS(M+a_{N+1}+1)$ have distinct centers, and   $n-1$  is the length of the  palindrome $BS(M+a_{N+1}-1)$.  Item 3 of Theorem \ref{Thm_shortestBS} implies that all factors of length $n$ occur in
$BS(M+a_{N+1})$ and $BS(M+a_{N+1}+1)$.   Therefore, the set $\mathcal{B}_\uu(n)$ consists of these two bispecial palindromes.

\item Now we assume that $n \in [q_{N}',  q_{N+1}'-2]$. Clearly, the length of $BS(M-1)$  is strictly smaller then $n-1$  and  $n$ does not exceed the length of  $BS(M+a_{N+1}-1)$. We choose the smallest  $m \in \{0,1,\dots,a_{N+1}-1\}$ such that  $n\leq |BS(M+m)|$.  The bispecial factors $ BS(M+m)$,  $ BS(M+m+1)$, and $BS(M+a_{N+1}+1)$ have distinct centers.

If $n-1$ is not the length of any bispecial factor, then Item 2 of Theorem \ref{Thm_shortestBS} implies that $ \mathcal{B}_\uu(n) = \{BS(M+m), BS(M+m+1), BS(M+a_{N+1}+1)\}$.

If $n-1$ is the length of a bispecial factor, then Item 3 of Theorem \ref{Thm_shortestBS} together with the fact that the centers of $BS(M+i)$ alternate for $i=0,1,\ldots, a_{N+1} -1$ implies that $ \mathcal{B}_\uu(n) = \{BS(M+m), BS(M+a_{N+1}+1)\}$.
\end{itemize}
If $\uu$ has the directive sequence $D^{a_1}G^{a_2}D^{a_3}G^{a_4} \cdots $, then the proof will be analogous, only the coordinates of the Parikh vectors will be exchanged, see Remark~\ref{Rem_GandDexchanged}.
\end{proof}

\begin{rem}
The recurrence function of a standard Sturmian sequence $\uu$ with the directive sequence $G^{a_1}D^{a_2}G^{a_3}D^{a_4} \cdots $ or $D^{a_1}G^{a_2}D^{a_3}G^{a_4} \cdots $ is known to satisfy $R_\uu(n)=q_{N+1}'+q_N'+n-1$ for every $n \in [q_N', q_{N+1}')$.
Let us show that this  formula is a consequence of the previous statements.  Indeed, by  Lemma \ref{Lem_BN}, we have to  find the longest return word to the bispecial factor from the set $\mathcal{B}_{\uu}(n)$ described in Theorem \ref{Thm_P01E}.
Using Proposition~\ref{ParikhRSB}, we find that the longest one is the return word  $s$ corresponding to the bispecial factor $BS(M+a_{N+1}+1)$.
Its length is $|s| = q_{N+1}'+q_N'$.
And thus Lemma \ref{Lem_BN} implies the above mentioned formula, which was  obtained by Hedlund and Morse already in 1940, see \cite{MoHe}.
\end{rem}

We have prepared everything we need to derive the formula for the recurrence function of CS Rote sequences.
For this purpose, we recall Theorem 3.10 from~\cite{MePeVu}:

\begin{thm}\label{Thm_RetWordsRote}
Let $\vv$ be a CS Rote sequence associated with the  standard Sturmian sequence $\uu=\S(\vv)$. Let $v$ be a non-empty prefix of $\vv$ and $u=\S(v)$.
Let $r$, resp. $s$ be the more frequent, resp. the less frequent return word to $u$ in $\uu$ and let $\ell$ be a positive integer such that $\uu$ is a concatenation of the blocks $r^{\ell }s$ and $r^{\ell+1}s$. Then the prefix $v$ of $\vv$ has three return words $A, B, C$ satisfying:
\begin{enumerate}
\item If $r$ is stable and $s$ unstable, then $\S(A0)=r, \quad \S(B0)=sr^{\ell}s, \quad \S(C0)=sr^{\ell+1}s$.
\item If $r$ is unstable and $s$ stable, then $\S(A0)=s, \quad \S(B0)=rr, \quad \S(C0)=rsr$.
\item If both $r$ and $s$ are unstable, then $\S(A0)=rr, \quad \S(B0)=rs, \quad \S(C0)=sr$.
\end{enumerate}
\end{thm}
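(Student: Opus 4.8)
The plan is to transfer the problem about occurrences of the prefix $v$ in $\vv$ into a problem about occurrences of $u=\S(v)$ in $\uu=\S(\vv)$, carrying one extra bit of information along. After possibly replacing $\vv$ by $E(\vv)$ (same language), I assume $\vv$ starts with $0$; then $u$ is the prefix of $\uu$ of length $|v|-1$. Every occurrence $i$ of $v$ in $\vv$ is an occurrence of $u$ in $\uu$, and conversely an occurrence $i$ of $u$ in $\uu$ arises from the factor $v_iv_{i+1}\cdots v_{i+|v|-1}$ of $\vv$, which equals $v$ or $E(v)$ according to whether $v_i=0$ or $v_i=1$ (the two-to-one description of $\S$ recalled in Section~\ref{S_mapS_CSRote}). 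I attach to each occurrence $i$ of $u$ the label $\sigma(i)=v_i$, so that the occurrences of $v$ in $\vv$ are exactly the occurrences of $u$ with $\sigma=0$. The telescoping identity $u_i+u_{i+1}+\dots+u_{i+m-1}=v_i+v_{i+m}\bmod 2$ then shows: if the return word to $u$ read between two consecutive occurrences of $u$ is $t$, then $\sigma$ changes by $|t|_1\bmod 2$, i.e.\ $\sigma$ is preserved when $t$ is stable and flipped when $t$ is unstable.

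Next I look at the derived sequence $\dd_\uu(u)$, written over the two-letter alphabet $\{R,S\}$ with $R\leftrightarrow r$ and $S\leftrightarrow s$; by hypothesis it is a concatenation of the blocks $R^\ell S$ and $R^{\ell+1}S$, so $SS$ never occurs and runs of $R$ have length $\ell$ or $\ell+1$, with $\ell\ge1$. A return word $X$ to $v$ in $\vv$ corresponds to a factor $w_jw_{j+1}\cdots w_{k-1}$ of $\dd_\uu(u)$ that is \emph{$\sigma$-minimal}: $\sigma=0$ at its two endpoints and $\sigma=1$ at every position strictly inside. Moreover, writing $X=v_i\cdots v_{i+|X|-1}$ between consecutive occurrences $i$, $i+|X|$ of $v$, the rule \eqref{jasne} (Lemma~\ref{pomocny}(iii)) gives $\S(X0)=\S(v_i\cdots v_{i+|X|})=u_i\cdots u_{i+|X|-1}=r_{w_j}r_{w_{j+1}}\cdots r_{w_{k-1}}$, the concatenation of the return words to $u$ traversed by $X$. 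So it remains to enumerate the $\sigma$-minimal factors of $\dd_\uu(u)$ using only the transition rule for $\sigma$ and these two combinatorial constraints.

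This is a short case analysis on the parities of $|r|_1$ and $|s|_1$. If $r$ is stable and $s$ unstable, then $\sigma$ is fixed by $R$ and flipped by $S$, so from a $\sigma=0$ position the $\sigma$-minimal factors are $R$, $SR^\ell S$ and $SR^{\ell+1}S$, giving $\S(A0)=r$, $\S(B0)=sr^\ell s$, $\S(C0)=sr^{\ell+1}s$. If $r$ is unstable and $s$ stable, then $\sigma$ is fixed by $S$ and flipped by $R$, and because $SS$ is forbidden the $\sigma$-minimal factors are $S$, $RR$ and $RSR$, giving $\S(A0)=s$, $\S(B0)=rr$, $\S(C0)=rsr$. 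If both are unstable, then $\sigma$ equals the position parity, so the $\sigma$-minimal factors are exactly the length-two factors $RR$, $RS$, $SR$, giving $\S(A0)=rr$, $\S(B0)=rs$, $\S(C0)=sr$. (The fourth combination, both $r$ and $s$ stable, is impossible, since the Parikh vectors of the two return words to a factor of a Sturmian sequence are the columns of a matrix of determinant $\pm1$, so $|r|_1,|s|_1$ cannot both be even; this is not needed for the three implications but explains why there are always exactly three return words.)

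The main obstacle is to prove that in each case all three listed factors actually occur in $\dd_\uu(u)$ starting from a position with $\sigma=0$, so that $A$, $B$, $C$ are genuinely \emph{distinct} return words to $v$. Distinctness is free: in each case $\S(A0),\S(B0),\S(C0)$ have pairwise distinct lengths, and $X\mapsto\S(X0)$ is injective on words with a prescribed first letter by Lemma~\ref{pomocny}(ii). For existence, one of the three factors occurs right at the start of $\dd_\uu(u)$, where $\sigma=0$ by definition; for the remaining two the delicate point is that $\dd_\uu(u)$ and its block-type sequence are aperiodic and balanced (being Sturmian), hence have no letter confined to positions of a fixed parity and no run structure keeping $\sigma$ away from $0$ at the required spots. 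Making this precise, via a run-length/parity argument on the Sturmian block-type sequence, is the real technical content; the rest is bookkeeping. Finally, the case of directive sequence $D^{a_1}G^{a_2}\cdots$ follows by exchanging the letters $0$ and $1$ (Remark~\ref{Rem_GandDexchanged}).
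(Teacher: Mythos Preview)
The paper does not give its own proof of this theorem: it is quoted verbatim as Theorem~3.10 from \cite{MePeVu} (see the sentence immediately preceding the statement). So there is no proof in the present paper to compare against.

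That said, your argument is a correct and natural route to the result. The reduction via the parity label $\sigma(i)=v_i$ on occurrences of $u$ in $\uu$, the transition rule $\sigma\mapsto\sigma+|t|_1\bmod 2$ along a return word $t$, and the identification of return words to $v$ with $\sigma$-minimal factors of the derived sequence $\dd_\uu(u)$ are all sound and match the picture one gets from Lemma~\ref{pomocny} and Section~\ref{S_mapS_CSRote}. Your case analysis on the (un)stability of $r,s$ is accurate, and the remark that $r,s$ cannot both be stable (unimodularity of the Parikh matrix) is the right reason for exactly three return words.

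You rightly flag the only genuine gap: showing that in each case all three listed factors actually occur at a $\sigma=0$ position. Your sketch is on the right track but not yet a proof. Concretely, in Case~1 you need that in the Sturmian block-type sequence over $\{\ell,\ell+1\}$ both letters occur at odd-indexed positions; in Case~2 you need that the run-length structure forces each of $S$, $RR$, $RSR$ to start at a position with an even number of preceding $R$'s; in Case~3 you need each of $RR$, $RS$, $SR$ to occur at an even position of $\dd_\uu(u)$. These all reduce to the statement that in an aperiodic Sturmian sequence no letter is confined to positions of a single parity. One clean way to see this: if $a$ occurred only at even positions, then $aa\notin\mathcal L$ and every occurrence of $a$ is isolated, so the sequence is a concatenation of blocks $ab^k$ with $k$ odd; but then the derived sequence to $a$ would have all return words of even length, forcing $2\mid q'_N$ for all large $N$, contradicting $\gcd(p_N,q_N)=1$ together with $q'_N=p_N+q_N$ and Lemma~\ref{Lem_Convergents}. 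Alternatively, use the mechanical description $u_n=\lfloor(n+1)\alpha+\rho\rfloor-\lfloor n\alpha+\rho\rfloor$ and equidistribution of $(2n\alpha)$ and $((2n+1)\alpha)$ mod~$1$. Filling in any one of these finishes your proof.
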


We will use the previous theorem for the determination of return words to bispecial factors of CS Rote sequences (which are by Lemma \ref{Lem_ShortestBSinRoteSturm} associated to bispecial factors of Sturmian sequences).
In particular, we focus on  $v$ such  that $\S(v)$ is a bispecial factor of the Sturmian sequence $\uu = \S({\vv})$ and $\S(v)$ belongs to the set   $\mathcal{B}_\uu(n)$  described  in   Theorem  \ref{Thm_P01E}.

In fact, the following lemmas explain that only the bispecial factor $\S(v) = BS(M+a_{N+1}+1)$, where $ N \in \N, M = a_0 + a_1 + \cdots a_N$, is important. By Proposition \ref{ParikhRSB}, its return words $r$ and $s$ have either the lengths
$ |r| = q_{N+1}'$  and $|s| = q_{N+1}' + q_{N}'$   if $a_{N+2}>1 $,
or $ |r| =q_{N+1}' + q_{N}' $  and $|s| = q_{N+1}'$ if $a_{N+2}=1 $.

\begin{lem}\label{KratkeBS1}
Let $\vv$ be a CS Rote sequence and $\uu = \S(\vv)$ be the associated standard Sturmian sequence with the  directive sequence $G^{a_1}D^{a_2}G^{a_3}D^{a_4} \cdots $ or $D^{a_1}G^{a_2}D^{a_3}G^{a_4} \cdots $.
Put $M= a_0+a_1+a_2 +\cdots + a_N$, where $a_0=0$.
Let $x$ and $y$ be the bispecial factors in $\vv$ such that $\S(x) = BS(M+a_{N+1}+1)$ and $\S(y) = BS(M+m)$, where $m \in \{0,1,\ldots, a_{N+1} - 1\}$.
Then at least one return word to $x$ in $\vv$ is longer than every return word to $y$ in $\vv$.
\end{lem}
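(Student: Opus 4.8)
The plan is to compare the length of the longest return word to $x$ in $\vv$ with that of the longest return word to $y$ in $\vv$, passing through Theorem~\ref{Thm_RetWordsRote} (which expresses return words of bispecial factors of $\vv$ via return words of the corresponding bispecial factors of $\uu$) and Proposition~\ref{ParikhRSB} (which gives the lengths of the latter explicitly). If the directive sequence of $\uu$ starts with $D$, Remark~\ref{Rem_GandDexchanged} only swaps the coordinates of the Parikh vectors and hence leaves all lengths unchanged, so it suffices to treat the case of a directive sequence starting with $G$.

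First I would record the Sturmian data. Since $\S(y)=BS(M+m)$ with $M+m=a_0+\cdots+a_N+m$ and $0\le m<a_{N+1}$, Proposition~\ref{ParikhRSB} gives the more and the less frequent return word $r_y,s_y$ to $\S(y)$ in $\uu$ with $|r_y|=q'_N$ and $|s_y|=m\,q'_N+q'_{N-1}$, and (exactly as in the proof of Proposition~\ref{Prop_KritExpo}) $\uu$ is a concatenation of $r_y^{\,\ell_y}s_y$ and $r_y^{\,\ell_y+1}s_y$ with $\ell_y=a_{N+1}-m\ge 1$. For $\S(x)=BS(M+a_{N+1}+1)$, the remark preceding the lemma (a direct consequence of Proposition~\ref{ParikhRSB}) gives return words $r_x,s_x$ whose lengths, in one order or the other, are $q'_{N+1}$ and $q'_{N+1}+q'_N$; in particular $|r_x|+|s_x|=2q'_{N+1}+q'_N$, and the associated concatenation parameter $\ell_x$ is again a positive integer.

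Next I would extract from Theorem~\ref{Thm_RetWordsRote} a two-sided estimate for the longest return word of a bispecial factor $w$ of $\vv$ whose image $\S(w)$ has more and less frequent return words $r,s$ in $\uu$ and concatenation parameter $\ell\ge 1$. By Lemma~\ref{Lem_Convergents} the Parikh vectors of $r$ and $s$ are distinct mod $2$ and neither is $\binom{0}{0}\bmod 2$; this rules out $r$ and $s$ being both stable, so exactly one of the three cases of Theorem~\ref{Thm_RetWordsRote} applies, and in them the longest of the return words $A,B,C$ to $w$ in $\vv$ has length $2|s|+(\ell+1)|r|$, or $2|r|+|s|$, or $\max(2|r|,|r|+|s|)$, respectively. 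A short check shows each of these three values lies in the interval $[\,|r|+|s|,\ 2|s|+(\ell+1)|r|\,]$. Applied to $y$, the upper bound gives that the longest return word to $y$ in $\vv$ has length at most $2|s_y|+(\ell_y+1)|r_y|=(m+a_{N+1}+1)q'_N+2q'_{N-1}$, which by $m\le a_{N+1}-1$ and $q'_{N+1}=a_{N+1}q'_N+q'_{N-1}$ is at most $2a_{N+1}q'_N+2q'_{N-1}=2q'_{N+1}$. Applied to $x$, the lower bound gives that the longest return word to $x$ in $\vv$ has length at least $|r_x|+|s_x|=2q'_{N+1}+q'_N>2q'_{N+1}$. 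Comparing the two estimates proves the statement.

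The routine but slightly delicate points are checking that each of the three case-formulas of Theorem~\ref{Thm_RetWordsRote} lies in the stated interval, and using Lemma~\ref{Lem_Convergents} to see that these three cases are exhaustive, so that the upper bound for $y$ holds regardless of the parities of the relevant convergents. One also has to be a little careful when invoking Theorem~\ref{Thm_RetWordsRote}, which is phrased for prefixes of $\vv$: since $\S(x)$ and $\S(y)$ are bispecial, hence palindromic prefixes of the standard Sturmian sequence $\uu$, one may take $v$ to be the prefix of $\vv$ of the appropriate length, and the set of return-word lengths is unchanged when passing between $v$, its letter-exchanged copy $E(v)$, and the bispecial factor $x$ (resp.\ $y$) itself. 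I expect the main obstacle to be keeping track of which convergent indices occur — in particular the split $a_{N+2}>1$ versus $a_{N+2}=1$ in the description of $r_x,s_x$ — but this is harmless here, since only the sum $|r_x|+|s_x|=2q'_{N+1}+q'_N$ enters the final comparison.
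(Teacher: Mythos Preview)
Your proposal is correct and follows essentially the same approach as the paper's proof: both bound the longest return word to $y$ above by $(\ell_y+1)|r_y|+2|s_y|\le 2q'_{N+1}$ and the longest return word to $x$ below by $|r_x|+|s_x|=2q'_{N+1}+q'_N$, using Proposition~\ref{ParikhRSB} for the lengths and Theorem~\ref{Thm_RetWordsRote} for the case split. You are in fact slightly more careful than the paper, which simply writes ``regardless of (un)stability'': you explicitly justify the two-sided interval $[\,|r|+|s|,\,2|s|+(\ell+1)|r|\,]$ for the maximal Rote return-word length, invoke Lemma~\ref{Lem_Convergents} to exclude the ``both stable'' case, and address the prefix-versus-bispecial-factor issue in applying Theorem~\ref{Thm_RetWordsRote}.
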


\begin{proof}
On one hand, by Lemmas \ref{Lem_ImageG}, \ref{Lem_ImageD} and Remark \ref{Rem_DerivedSeq}, the derived sequence $\dd_{\uu}(\S(y))$  is a standard Sturmian sequence with the directive sequence $G^{a_{N+1}-m}D^{a_{N+2}}G^{a_{N+3}} \cdots $ or $D^{a_{N+1}-m}G^{a_{N+2}}D^{a_{N+3}} \cdots$. It implies that $\uu$ is a concatenation of the blocks ${r'}^\ell s'$  and ${r'}^{\ell+1} s'$, where $\ell = a_{N+1}-m$.  The return words to $\S(y)$ are by Proposition \ref{ParikhRSB} of length  $|r'| =p_N+ q_N= q_N'$ and $|s'| =  m\, q_{N}'+q_{N-1}'$. Regardless of (un)stability of the return words to $\S(y)$, the longest return word to $y$ in $\vv$ is by Theorem \ref{Thm_RetWordsRote} of length at most
$$(\ell +1)|r'| + 2|s'|  =
(a_{N+1}-m +1) q_{N}' + 2(m\, q_{N}'+q_{N-1}') = q_{N+1}' +(m+1)q_{N}'
+ q_{N-1}'\leq 2q_{N+1}'.$$
On the other hand,  the return words $r, s$ to $\S(x)$ in $\uu$  have  by  Proposition \ref{ParikhRSB} either  lengths $ |r| = q_{N+1}'$  and $|s| = q_{N+1}' + q_{N}'$   (if $a_{N+2}>1 $),  or  lengths   $ |r| =q_{N+1}' + q_{N}' $  and $|s| = q_{N+1}'$ (if $a_{N+2}=1 $). Regardless of (un)stability of the return words to $\S(x)$,  one of the return words to $x$ in $\vv$ is of length at least
$$|r|+|s| =   2 q_{N+1}' + q_{N}'\,. $$
\end{proof}

\begin{lem}\label{KratkeBS2}
Let $\vv$ be a CS Rote sequence and $\uu = \S(\vv)$ be  the associated standard Sturmian sequence with the  directive sequence $G^{a_1}D^{a_2}G^{a_3}D^{a_4} \cdots $ or $D^{a_1}G^{a_2}D^{a_3}G^{a_4} \cdots $.
Put $M=a_0+ a_1+a_2 +\cdots + a_N$, where $a_0=0$.
Let $x$ and $y$ be the bispecial factors in $\vv$ such that $\S(x) = BS(M+a_{N+1}+1)$ and $\S(y) = BS(M+a_{N+1})$.  Then at least one return word to $x$ in $\vv$ is  at least as long as every  return word to $y$ in $\vv$.
\end{lem}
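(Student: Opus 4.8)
The plan is to run the same argument as in the proof of Lemma~\ref{KratkeBS1}, but keeping exact track of lengths: since $\S(y)=BS(M+a_{N+1})$ is the bispecial factor immediately preceding $\S(x)=BS(M+a_{N+1}+1)$, its return words in $\vv$ may be almost as long as those of $\S(x)$ — and in one parity pattern they turn out to be of exactly equal length, which is why the statement claims only ``at least as long as''. First I would describe the return words in the Sturmian sequence $\uu$. By Proposition~\ref{ParikhRSB} together with Remark~\ref{Rem_DerivedSeq} and Lemmas~\ref{Lem_ImageG} and~\ref{Lem_ImageD}, the return words to $\S(y)$ in $\uu$ are $r'$ (the more frequent one, of length $q'_{N+1}$) and $s'$ (of length $q'_N$), and $\uu$ is concatenated from the blocks $(r')^{a_{N+2}}s'$ and $(r')^{a_{N+2}+1}s'$. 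Passing to the derived sequence $\dd_\uu(\S(y))$, whose first non-empty bispecial factor corresponds to $\S(x)$, one checks that the two return words to $\S(x)$ in $\uu$ are precisely $r'$ and $r's'$, of lengths $q'_{N+1}$ and $q'_{N+1}+q'_N$ (in accordance with the paragraph preceding the lemma); the more frequent of them is $r'$ when $a_{N+2}>1$ and $r's'$ when $a_{N+2}=1$, and in the latter case the block exponent governing the decomposition of $\uu$ into the return words of $\S(x)$ is $a_{N+3}$.

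Next I would settle which of the words $r'$, $s'$, $r's'$ are stable. For the directive sequence $G^{a_1}D^{a_2}G^{a_3}D^{a_4}\cdots$ a factor is stable iff the second coordinate of its Parikh vector is even; since $\vec{V}(r')=\binom{p_{N+1}}{q_{N+1}}$, $\vec{V}(s')=\binom{p_N}{q_N}$ and $\vec{V}(r's')=\vec{V}(r')+\vec{V}(s')$, stability is governed by the parities of $q_{N+1}$ and $q_N$, which by Lemma~\ref{Lem_Convergents} are never both even. Hence there are exactly three parity patterns for $(q_{N+1},q_N)$, and in each the stability of $r's'$ is forced. For each pattern — and, within it, separately for $a_{N+2}=1$ and $a_{N+2}>1$, which interchange the more and the less frequent return word to $\S(x)$, hence the applicable item of Theorem~\ref{Thm_RetWordsRote} — I would apply Theorem~\ref{Thm_RetWordsRote} to both $x$ and $y$ and read off the length of the longest return word in $\vv$. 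This yields in each case an explicit value for the longest return word to $y$ in $\vv$ and a lower bound for the longest return word to $x$ in $\vv$, both linear in $q'_{N+1}$, $q'_N$, $a_{N+2}$ (and $a_{N+3}$ when $a_{N+2}=1$), and a short comparison shows that the latter is never smaller; in the pattern where $q_{N+1}$ is odd, $q_N$ even and $a_{N+2}>1$, both equal $2q'_{N+1}+q'_N$, which is the announced equality case. The directive sequence $D^{a_1}G^{a_2}D^{a_3}G^{a_4}\cdots$ is then handled by Remark~\ref{Rem_GandDexchanged}: one only replaces the parities of $q_{N+1}$, $q_N$ by those of $p_{N+1}$, $p_N$, while all lengths, being sums of Parikh coordinates, are unchanged.

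The main obstacle will be the bookkeeping in this case analysis: pinning down the block exponent for the return words to $\S(x)$ in the case $a_{N+2}=1$ (which requires descending one more level in the $S$-adic expansion), keeping track of which item of Theorem~\ref{Thm_RetWordsRote} applies in each parity pattern, and not losing the off-by-one in that theorem, where $|A|=|\S(A0)|$ and similarly for $B$ and $C$. One must also be attentive to the equality scenario, since it fixes the precise form of the statement.
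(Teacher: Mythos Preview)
Your proposal is correct and follows the same approach as the paper: compute the Parikh vectors of the Sturmian return words to $\S(y)$ and $\S(x)$ via Proposition~\ref{ParikhRSB}, determine their stability from the parities of $q_{N+1}$ and $q_N$ (respectively $p_{N+1}$, $p_N$), lift to return words in $\vv$ by Theorem~\ref{Thm_RetWordsRote}, and compare lengths case by case.

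The only organisational difference is that the paper is more economical in one branch. Instead of splitting all three parity patterns, it groups the two patterns in which $r'$ is \emph{unstable} ($q_{N+1}$ odd) and observes uniformly that every return word to $y$ in $\vv$ has length at most $2|r'|+|s'|=2q'_{N+1}+q'_N$ (this upper bound covers both Items~2 and~3 of Theorem~\ref{Thm_RetWordsRote}), while regardless of parities some return word to $x$ in $\vv$ has length at least $|r|+|s|=2q'_{N+1}+q'_N$; this disposes of both subcases at once without distinguishing $a_{N+2}=1$ from $a_{N+2}>1$. Only the remaining pattern $q_{N+1}$ even, where the longest return word to $y$ can exceed $2q'_{N+1}+q'_N$, is analysed in full with the $a_{N+2}$ split. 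Your full six-case analysis works equally well and has the advantage of exhibiting explicitly that the equality $2q'_{N+1}+q'_N$ occurs exactly when $q_{N+1}$ is odd, $q_N$ even, and $a_{N+2}>1$.
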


\begin{proof}
Let us denote the return words to $\S(y)$ by $r'$  and $s'$, and the return words to $\S(x)$ by $r$ and $s$.
By Proposition \ref{ParikhRSB}, $|r'| = q_{N+1}'$ and $|s'| = q_{N}'$.

First, we assume that $r'$  is unstable.
Then by Theorem  \ref{Thm_RetWordsRote}, the return words to $y$ in $\vv$  are of length at most $2|r'| +|s'| = 2q_{N+1}'+ q_{N}'$. Regardless of (un)stability of the return words to $\S(x)$,  one of the return words to $x$ in $\vv$ is of length at least
$|r|+|s| =   2 q_{N+1}' + q_{N}'\,. $

It remains to discuss the case when  $r'$  is stable.
Let us assume that $\uu$ has the directive sequence $G^{a_1}D^{a_2}G^{a_3}D^{a_4} \cdots$.
By Proposition \ref{ParikhRSB}, it means that $|r'|_1 = q_{N+1}$ is even.
We use Theorem \ref{Thm_RetWordsRote} to find the longest return word to $y$. Similarly as in the proof of Lemma \ref{KratkeBS1}, we determine that $\ell = a_{N+2}$ and  thus the longest return word  to $y$ in $\vv$  is of length
$$L'=2|s'| + (\ell +1) |r'|=2\, q_{N}' + (a_{N+2} +1)q_{N+1}' = q_{N+2}' + q_{N+1}' + q_{N}'\,.$$

Let us compare $L'$ with the length of the return words to $x$ in $\vv$. If $a_{N+2}>1$, then by  Proposition \ref{ParikhRSB},  $|r|_1 =  |r'|_1 = q_{N+1}$ and  $r$ is stable as well.
The longest return word to $x$ is by Theorem \ref{Thm_RetWordsRote} the return word $sr^{\ell+1}s$, where $\ell = a_{N+2}-1$.
Its length is
$$L= 2|s| + a_{N+2}|r| = 2(q_{N+1}'+ q_N') + a_{N+2}q_{N+1}'=q_{N+2}'+2q_{N+1}'+ q_N' > L'\,.$$

If $a_{N+2}=1$, then by  Proposition \ref{ParikhRSB},  $|r|_1 = q_{N+2}=q_{N+1} + q_{N}$ and $|s|_1 = q_{N+1}$.
Since $q_{N+1}$ is even, necessarily  $q_{N}$ is odd (as follows from the well-known relation $p_Nq_{N+1} -p_{N+1}q_{N} = (-1)^{N+1}$ for all $N$). It means that $r$ is unstable and $s$ is stable. Thus the longest return word to $x$ in $\vv$ has the length
$$L =2|r| + |s| = 2 (q_{N+1}'+ q_N') +q_{N+1}' = 2q_{N+2}' + q_{N+1}' > L'. $$

If $\uu$ has the directive sequence $D^{a_1}G^{a_2}D^{a_3}G^{a_4} \cdots $, the proof is analogous, we only have to take into account that the coordinates of the Parikh vectors are exchanged (see Remark~\ref{Rem_GandDexchanged}). In particular, instead of considering $q_N$ when determining the number of ones, we consider $p_N$.
\end{proof}

\begin{prop}\label{Coro_RecFunctionRote}
Let $\vv$ be a CS Rote sequence. Let $\uu$ be the standard Sturmian sequence such that $\mathcal{L(\S(\vv))} = \mathcal{L}(\uu)$ and let $\uu$ have the directive sequence $G^{a_1}D^{a_2}G^{a_3}D^{a_4} \cdots $ or $D^{a_1}G^{a_2}D^{a_3}G^{a_4} \cdots $.
Put  $M= a_0+a_1+a_2 +\cdots + a_N$, where $a_0=0$.
Let $L$ be the length of  the longest return word to the bispecial factor $v$ of $\vv$  such that $\S(v)$ is the bispecial factor  $ BS(M+a_{N+1}+1)$ in $\uu$.  Then the recurrence function of $\vv$ satisfies $R_\vv(n+1)=L+n$ for any  $n \in [q_N', q_{N+1}')$, where $N \in \mathbb N$.
\end{prop}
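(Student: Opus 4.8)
The plan is to deduce the statement by assembling Lemma~\ref{Lem_BN}, Lemma~\ref{Lem_ShortestBSinRoteSturm}, Theorem~\ref{Thm_P01E} and Lemmas~\ref{KratkeBS1}--\ref{KratkeBS2}; essentially no new computation is needed, only bookkeeping. First I would recall that every CS Rote sequence $\vv$ is uniformly recurrent and aperiodic, so Lemma~\ref{Lem_BN} applies and gives $R_\vv(n+1)=\max\{|r| : r \text{ is a return word to some } b\in\mathcal{B}_\vv(n+1)\}+n$. Hence it suffices to prove that the maximal length of a return word to a bispecial factor from $\mathcal{B}_\vv(n+1)$ equals $L$.

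Next I would transfer the description of $\mathcal{B}_\vv(n+1)$ to the Sturmian side. The mapping $\S$ sends the factors of $\vv$ of length $n+1$ onto the factors of $\uu$ of length $n$, two-to-one, the two preimages of a factor of $\uu$ being $w$ and $E(w)$; combining this with Lemma~\ref{Lem_ShortestBSinRoteSturm} yields $\{\S(b): b\in\mathcal{B}_\vv(n+1)\}=\mathcal{B}_\uu(n)$. Moreover, since $\mathcal{L}(\vv)$ is closed under $E$, the letter exchange $E$ maps the return words to a bispecial factor $b$ of $\vv$ bijectively onto the return words to $E(b)$ while preserving their lengths; as $b$ and $E(b)$ are exactly the two bispecial factors of $\vv$ whose $\S$-image is a prescribed bispecial factor of $\uu$, the maximal return-word length to $b$ in $\vv$ depends only on $\S(b)$. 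Therefore the maximum above equals the maximum, over $\beta\in\mathcal{B}_\uu(n)$, of the length of the longest return word in $\vv$ to a bispecial factor $v$ of $\vv$ with $\S(v)=\beta$.

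Then I would invoke Theorem~\ref{Thm_P01E} with $M=a_0+a_1+\cdots+a_N$: for $n\in[q'_N,q'_{N+1})$ the set $\mathcal{B}_\uu(n)$ always contains $BS(M+a_{N+1}+1)$, and each of its remaining elements is of the form $BS(M+j)$ with $0\le j\le a_{N+1}$ (note that $|BS(M+a_{N+1}-1)|=q'_{N+1}-2$ by Proposition~\ref{ParikhRSB}, so for $n=q'_{N+1}-1$ one is automatically in the second case of the theorem, and the two cases together cover the whole interval). By Lemma~\ref{KratkeBS1}, for $0\le j\le a_{N+1}-1$ the longest return word in $\vv$ to the bispecial factor $v$ with $\S(v)=BS(M+a_{N+1}+1)$ is strictly longer than every return word in $\vv$ to any bispecial factor $y$ with $\S(y)=BS(M+j)$; Lemma~\ref{KratkeBS2} gives the same conclusion (with ``at least as long'') for $j=a_{N+1}$. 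Consequently the maximum is attained at $\beta=BS(M+a_{N+1}+1)$ and equals $L$, whence $R_\vv(n+1)=L+n$.

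The main obstacle is the bookkeeping of the second paragraph: making precise that the $\S$-preimage of a bispecial factor of $\uu$ consists of exactly the two bispecial factors $b,E(b)$ of $\vv$ (using Lemma~\ref{lem_Bispecial} and the remarks after Proposition~\ref{Prop_SturmRote}), that both lie in $\mathcal{B}_\vv(n+1)$ whenever the image lies in $\mathcal{B}_\uu(n)$, and that $E$ preserves return-word lengths. The potentially annoying edge case $M+j=0$ (the bispecial factor $\varepsilon$ of $\uu$, whose preimages are the single letters $0$ and $1$) needs no separate treatment here, since it is already absorbed into the proofs of Lemmas~\ref{KratkeBS1} and~\ref{KratkeBS2}.
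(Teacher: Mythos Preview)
Your proposal is correct and follows essentially the same route as the paper: Lemma~\ref{Lem_BN} to reduce to $\mathcal{B}_\vv(n+1)$, Lemma~\ref{Lem_ShortestBSinRoteSturm} to pass to $\mathcal{B}_\uu(n)$, Theorem~\ref{Thm_P01E} to list its elements, and Lemmas~\ref{KratkeBS1}--\ref{KratkeBS2} to single out $BS(M+a_{N+1}+1)$. The one bookkeeping point the paper makes explicit that you should add is that the hypotheses only give $\mathcal{L}(\S(\vv))=\mathcal{L}(\uu)$, not $\S(\vv)=\uu$, whereas Lemmas~\ref{KratkeBS1}--\ref{KratkeBS2} are stated for a $\vv$ whose image $\S(\vv)$ is itself standard; the paper handles this by replacing $\vv$ with a CS Rote sequence $\vv'$ satisfying $\S(\vv')=\uu$ and observing that $\mathcal{L}(\vv)=\mathcal{L}(\vv')$, so both the recurrence function and all return-word lengths are unchanged---you can do the same in one line, which then also makes your $b$-versus-$E(b)$ discussion unnecessary.
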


\begin{proof}
Let $\vv'$ be a CS Rote sequence associated with the standard Sturmian sequence $\uu$ such that $\mathcal{L}(\S(\vv)) = \mathcal{L}(\uu)$.  Clearly, $\mathcal{L}(\vv) = \mathcal{L}(\vv')$.
Since the recurrence function depends only on the language of the sequence and not on the sequence itself, we can work with the CS Rote sequence $\vv'$ instead of $\vv$.

It follows from Lemma \ref{Lem_BN} that $R_\vv(n+1) = R_{\vv'}(n+1)=L+n$, where $L$ is the length of the longest return word to a bispecial factor from the set $\mathcal{B}_{\vv'}(n+1)$.
Lemma \ref{Lem_ShortestBSinRoteSturm} shows the correspondence between the bispecial factors from the set $\mathcal{B}_{\vv'}(n+1)$ and the bispecial factors from the set $\mathcal{B}_\uu(n)$. In particular, if $v \in \mathcal{B}_{\vv'}(n+1)$, then $\S(v) \in \mathcal{B}_{\uu}(n)$.
For every $n \in [q_N', q_{N+1}')$, where $N \in \N$, the set $\mathcal{B}_\uu(n)$ is described in Theorem~\ref{Thm_P01E}.
Together with Lemmas \ref{KratkeBS1} and \ref{KratkeBS2}, it implies that the bispecial factor $v$ such that $\S(v)$ equals $BS(M+a_{N+1}+1)$ has the longest return word among all bispecial factors from the set $\mathcal{B}_{\vv'}(n+1)$.
\end{proof}

\begin{thm}\label{Thm_RecFunctionRoteDetailed}
Let $\vv$ be a CS Rote sequence and let $\uu$ be the standard Sturmian sequence such that $\mathcal{L}(\S(\vv)) = \mathcal{L}(\uu)$.
If $\uu$ has the directive sequence $G^{a_1}D^{a_2}G^{a_3}D^{a_4} \cdots $, then the value of the recurrence function $R_\vv$ for $n \in [q_N', q_{N+1}')$, $N \in \mathbb N$, is given by
\medskip

\begin{description}
\item[Case $q_N$ even] \quad  $  R_\vv(n+1) = \left\{\begin{array}{ll}  2q_{N+1}'+q_N'+n & \quad \text{if  \ \ } a_{N+2}>1,\\  2q_{N+2}'+n & \quad \text{if  \ \ } a_{N+2}=1. \end{array}\right. $
\medskip

\item[Case $q_{N+1}$ even] \quad  $  R_\vv(n+1) = \left\{\begin{array}{ll}  q_{N+2}'+2q_{N+1}'+q_N'+n & \quad \text{if  \ \ } a_{N+2}>1\\  2q_{N+2}'+q_{N+1}'+n & \quad \text{if  \ \ } a_{N+2}=1. \end{array}\right.$
\medskip

\item[Case $q_N, q_{N+1}$ odd] \quad  $  R_\vv(n+1) = \left\{\begin{array}{ll}  3q_{N+1}'+q_{N}'+n & \quad \text{if  \ \ } a_{N+2}>1\\  q_{N+3}'+q_{N+2}'+q_{N+1}'+n & \quad \text{if  \ \ } a_{N+2}=1. \end{array}\right.$
\end{description}

If $\uu$ has the directive sequence $D^{a_1}G^{a_2}D^{a_3}G^{a_4} \cdots $, then the value of the recurrence function $R_\vv$ for  $n \in [q_N', q_{N+1}')$, $N \in \mathbb N$, is given by
\medskip

\begin{description}
\item[Case $p_N$ even] \quad  $  R_\vv(n+1) = \left\{\begin{array}{ll}  2q_{N+1}'+q_N'+n & \quad \text{if  \ \ } a_{N+2}>1,\\  2q_{N+2}'+n & \quad \text{if  \ \ } a_{N+2}=1. \end{array}\right. $
\medskip

\item[Case $p_{N+1}$ even] \quad  $  R_\vv(n+1) = \left\{\begin{array}{ll}  q_{N+2}'+2q_{N+1}'+q_N'+n & \quad \text{if  \ \ } a_{N+2}>1\\  2q_{N+2}'+q_{N+1}'+n & \quad \text{if  \ \ } a_{N+2}=1. \end{array}\right.$
\medskip

\item[Case $p_N, p_{N+1}$ odd] \quad  $  R_\vv(n+1) = \left\{\begin{array}{ll}  3q_{N+1}'+q_{N}'+n & \quad \text{if  \ \ } a_{N+2}>1\\  q_{N+3}'+q_{N+2}'+q_{N+1}'+n & \quad \text{if  \ \ } a_{N+2}=1. \end{array}\right.$
\end{description}
\end{thm}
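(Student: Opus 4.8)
The plan is to reduce everything to Proposition~\ref{Coro_RecFunctionRote}, which already tells us that $R_\vv(n+1)=L+n$ for $n\in[q_N',q_{N+1}')$, where $L$ is the length of the longest return word in $\vv$ to the bispecial factor $v$ with $\S(v)=BS(M+a_{N+1}+1)$ and $M=a_0+a_1+\cdots+a_N$. So the proof consists of computing this single number $L$ by means of Theorem~\ref{Thm_RetWordsRote}, which in turn needs the return words $r,s$ of $\S(v)$ in $\uu$, their lengths, their (un)stability, and the integer $\ell$ for which $\uu$ is concatenated from $r^\ell s$ and $r^{\ell+1}s$.

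First I would locate $\S(v)=BS(M+a_{N+1}+1)$ in the enumeration of bispecial factors used in Proposition~\ref{ParikhRSB}. Since $M+a_{N+1}+1=1+(a_0+a_1+\cdots+a_{N+1})$, it is the $n'$-th bispecial factor with $n'=m'+(a_0+\cdots+a_{N'})$, where $m'=1$, $N'=N+1$ if $a_{N+2}>1$, and $m'=0$, $N'=N+2$ if $a_{N+2}=1$. Substituting into Proposition~\ref{ParikhRSB} and using $p_K+q_K=q_K'$ from Remark~\ref{SlizeneZlomky}, I get (for the directive sequence $G^{a_1}D^{a_2}\cdots$) that $|r|=q_{N+1}'$, $|s|=q_{N+1}'+q_N'$ with $|r|_1=q_{N+1}$, $|s|_1=q_{N+1}+q_N$ when $a_{N+2}>1$; and $|r|=q_{N+2}'=q_{N+1}'+q_N'$, $|s|=q_{N+1}'$ with $|r|_1=q_{N+2}$, $|s|_1=q_{N+1}$ when $a_{N+2}=1$. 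The value of $\ell$ equals the first exponent in the directive sequence of $\dd_\uu(\S(v))$, which by Remark~\ref{Rem_DerivedSeq} and Corollary~\ref{obrazyBS} is $\Delta_{M+a_{N+1}+1}\Delta_{M+a_{N+1}+2}\cdots$; reading off this exponent gives $\ell=a_{N+2}-1$ when $a_{N+2}>1$ and $\ell=a_{N+3}$ when $a_{N+2}=1$ (always $\geq1$, so Theorem~\ref{Thm_RetWordsRote} applies).

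Then comes the case analysis. From $p_Nq_{N+1}-p_{N+1}q_N=(-1)^{N+1}$ it follows that at most one of $q_N,q_{N+1}$ is even, so the three parity cases in the statement are exhaustive and disjoint; moreover $q_{N+2}=a_{N+2}q_{N+1}+q_N$, so in the regime $a_{N+2}=1$ the parities of $q_{N+2}$ and $q_{N+1}$ are fixed by those of $q_N,q_{N+1}$. Combining this with the above formulas for $|r|_1,|s|_1$, each of the three parity cases splits into the subcases $a_{N+2}>1$ and $a_{N+2}=1$, and in each of the six subcases exactly one of Items 1--3 of Theorem~\ref{Thm_RetWordsRote} applies. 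In each subcase I would write $|A|,|B|,|C|$ using $|\S(w0)|=|w|$, set $L=\max\{|A|,|B|,|C|\}$, and simplify with $q_K'=a_Kq_{K-1}'+q_{K-2}'$. For instance, when $q_{N+1}$ is even and $a_{N+2}>1$, then $r$ is stable and $s$ unstable, so Item 1 gives $\S(A0)=r$, $\S(B0)=sr^\ell s$, $\S(C0)=sr^{\ell+1}s$ with $\ell=a_{N+2}-1$, whence $L=|C|=2|s|+(\ell+1)|r|=(a_{N+2}+2)q_{N+1}'+2q_N'=q_{N+2}'+2q_{N+1}'+q_N'$; the remaining five computations are analogous and reproduce the displayed formulas. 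Finally, for the directive sequence $D^{a_1}G^{a_2}\cdots$ I would invoke Remark~\ref{Rem_GandDexchanged}: the two coordinates of every Parikh vector are swapped, so all the lengths $|r|,|s|,|A|,|B|,|C|$ are unchanged while the parity conditions now involve $p_N,p_{N+1}$ instead of $q_N,q_{N+1}$ (the same identity shows at most one of them is even), giving exactly the second set of formulas.

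The whole argument is bookkeeping; the place where a slip is easiest is keeping the correct $\ell$ (equivalently, the correctly re-indexed form of $BS(M+a_{N+1}+1)$) in the two regimes $a_{N+2}>1$ and $a_{N+2}=1$, since this, together with the stability pattern of $r$ and $s$, determines which of $A,B,C$ is the longest return word. A second point to handle with care is that, although in the regime $a_{N+2}=1$ the relevant Parikh data already reach the convergents $q_{N+2},q_{N+3}$, the case split of the theorem is still phrased through $q_N,q_{N+1}$; this is consistent precisely because $q_{N+2}=q_{N+1}+q_N$ there.
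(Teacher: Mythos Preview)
Your proposal is correct and follows essentially the same approach as the paper's own proof: reduce via Proposition~\ref{Coro_RecFunctionRote} to computing the single quantity $L$, identify the return words $r,s$ to $BS(M+a_{N+1}+1)$ through Proposition~\ref{ParikhRSB} (splitting on $a_{N+2}>1$ versus $a_{N+2}=1$), determine $\ell$ from the directive sequence of the derived sequence, and then run the parity case analysis using Theorem~\ref{Thm_RetWordsRote}. Your explicit re-indexing $(m',N')$ and derivation of $\ell$ are slightly more systematic than the paper's presentation, but the argument is the same.
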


\begin{proof}
By  Proposition \ref{Coro_RecFunctionRote}, $R_\vv(n+1) = L+n$, where  $L$  is the length of the longest  return word  to the bispecial factor $v$ in $\vv$   such that $\S(v) = BS(M+a_{N+1}+1)$.
Consider first that $\uu$ has the directive sequence $G^{a_1}D^{a_2}G^{a_3}D^{a_4} \cdots $.
By Proposition  \ref{ParikhRSB}, the Parikh vectors of the return words $r$ and $s$  to the bispecial factor  $BS(M+a_{N+1}+1)$ are

\begin{enumerate}
\item
$\vec{V}(r)  = \left(\!\!\begin{array}{c}p_{N+1}\\ q_{N+1}\end{array}\!\!\right) , \ \ \vec{V}(s)  = \left(\!\!\begin{array}{c} p_{N+1}+p_{N}\\ q_{N+1}+q_{N}\end{array}\!\!\right)  \ \text{if } \  a_{N+2} >1$;

\item $\vec{V}(r)  = \left(\!\!\begin{array}{c} p_{N+1}+p_{N}\\ q_{N+1}+q_{N}\end{array}\!\!\right),    \ \ \vec{V}(s)  = \left(\!\!\begin{array}{c}p_{N+1}\\ q_{N+1}\end{array}\!\!\right)   \ \text{if } \  a_{N+2} =1$.
\end{enumerate}
Let us emphasize that at most one of the numbers $q_{N}$  and   $q_{N+1}$  is  even.  It follows from the well-known relation $p_Nq_{N+1} -p_{N+1}q_{N} = (-1)^{N+1}$ for all $N$. Moreover, let us recall that $p_N+q_N = q'_N$.
\medskip

\noindent First we discuss the case $a_{N+2}>1$.
\begin{itemize}
\item   If $q_N$ is even, then $q_{N+1}$ is odd, i.e., $r$ and $s$ are unstable. Since  $|r|< |s|$,   Item 3 of Theorem \ref{Thm_RetWordsRote}  gives
 $L = |rs| = 2q_{N+1}' + q_{N}'$.
\item   If $q_{N+1}$ is even, then $q_{N}$ is odd, i.e., $r$  is stable and $s$ is  unstable.  We use Item 1 of Theorem \ref{Thm_RetWordsRote}  with   $\ell =a_{N+2} - 1$.  Clearly, $L = 2|s| + (\ell +1)|r| = 2(q_{N+1}' + q_{N}') + a_{N+2}q_{N+1}' = q_{N+2}' + 2q_{N+1}' + q_{N}'$.
\item If both $q_{N}$,  $q_{N+1}$ are  odd, then $r$ is unstable and $s$ is stable. Item 2 of Theorem \ref{Thm_RetWordsRote}  implies
 $L = |rsr| = 3q_{N+1}' + q_{N}'$.
\end{itemize}
It remains to discuss the case $a_{N+2}=1$.
\begin{itemize}
\item   If $q_N$ is even, then $q_{N+1}$ is odd, i.e., $r$ and $s$ are unstable. Since  $|r|> |s|$,   Item 3 of Theorem \ref{Thm_RetWordsRote}  gives
 $L = |rr| = 2q_{N+1}' + 2q_{N}' = 2q_{N+2}'$.
\item   If $q_{N+1}$ is even, then $q_{N}$ is odd, i.e., $r$  is unstable and $s$ is  stable.
Item 2 of Theorem \ref{Thm_RetWordsRote}  implies
 $L = |rsr| = 3q_{N+1}' + 2q_{N}'= 2q_{N+2}' + q_{N+1}'$.
\item If both $q_{N}$,  $q_{N+1}$ are odd, then $r$ is stable and $s$ is unstable. We use Item 1 of Theorem \ref{Thm_RetWordsRote}  with   $\ell =a_{N+3}$.   Thus $L = 2|s| + (\ell +1)|r| = 2q_{N+1}' + (a_{N+3}+1)(q_{N+1}' + q_{N}') = q_{N+3}' + q_{N+2}' + q_{N+1}' $.
\end{itemize}

If $\uu$ has the directive sequence $D^{a_1}G^{a_2}D^{a_3}G^{a_4} \cdots $, then the statement of Theorem~\ref{Thm_RecFunctionRoteDetailed} will stay the same, only $q_N$ and $q_{N+1}$ will be replaced by $p_N$ and $p_{N+1}$ because the Parikh vectors of $r$ and $s$ have the coordinates exchanged, see Remark~\ref{Rem_GandDexchanged}.
\end{proof}

\begin{exam}
By Proposition~\ref{Prop_MalyExponent}, the critical exponent of the CS Rote sequence $\vv$ such that $\S(\vv)$ has the directive sequence $G(D^2 G^2)^{\omega}$ is $\CR(\vv)=2+\frac{1}{\sqrt{2}}$.
In Example~\ref{Ex_Rozdil_cr_u_Eu}, we have shown that the CS Rote sequence $\vv'$ associated to the Sturmian sequence $\S(\vv')=E(\S(\vv))$ has the critical exponent $\CR(\vv')=4+\frac{1}{1+\sqrt{2}}$.

Let us find an explicit formula for the recurrence function $R_{\vv}$, resp. $R_{\vv'}$ of the CS Rote sequence $\vv$, resp. $\vv'$.
We will see that these recurrence functions differ essentially, too.
In the proof of Proposition~\ref{Prop_MalyExponent}, we have shown that all $q_{N}$ are odd and we have found an explicit formula for $q_{N}'$,  see \eqref{reseni}.
Applying Theorem~\ref{Thm_RecFunctionRoteDetailed}, we obtain for every $n \in [q_N', q_{N+1}')$
 $$R_{\vv}(n+1)=3q_{N+1}'+q_{N}'+n=  n+ \frac{1}{2\sqrt{2}} \Bigl( (4+3\sqrt{2})(1+\sqrt{2})^{N+1} - (4-3\sqrt{2})(1-\sqrt{2})^{N+1}\Bigr)\,.   $$
Furthermore, $p_N$ is even if and only if $N$ is even.
Therefore, we obtain for every $n \in [q_{2N}', q_{2N+1}')$
 $$R_{\vv'}(n+1)=2q_{2N+1}'+q_{2N}'+n=n+\frac{1}{2\sqrt{2}}\Bigl( (1+\sqrt{2})^{2N+3} - (1-\sqrt{2})^{2N+3}\Bigr)\,;$$
 and for every $n \in [q_{2N-1}', q_{2N}')$
 $$R_{\vv'}(n+1)=q_{2N+1}'+2q_{2N}'+q_{2N-1}'+n=n+\frac{1}{\sqrt{2}}\Bigl( (1+\sqrt{2})^{2N+2} - (1-\sqrt{2})^{2N+2}\Bigr)\,.$$

\end{exam}


\nocite{*}
\bibliographystyle{abbrvnat}

\end{document}